\newtheorem*{rep@theorem}{\rep@title}
\newcommand{\newreptheorem}[2]{%
\newenvironment{rep#1}[1]{%
 \def\rep@title{#2 \ref{##1}}%
 \begin{rep@theorem}}%
 {\end{rep@theorem}}}
\newtheorem{thm}{Theorem}[section]
\newtheorem{thmx}{Theorem}
\newtheorem{corx}[thmx]{Corollary}
\newtheorem{lem}[thm]{Lemma}
\newtheorem{prop}[thm]{Proposition}
\newtheorem{cor}[thm]{Corollary}
\newtheorem*{thm*}{Theorem}
\newtheorem*{problem*}{Problem}
\newtheorem*{claim*}{Claim}
\theoremstyle{definition}
\newtheorem{defi}[thm]{Definition}
\newtheorem{exam}[thm]{Example}
\newtheorem{rem}[thm]{Remark}
\newtheorem*{conjecture}{Conjecture}
\newcommand{\norm}[1] {\| #1 \|}
\newcommand{\abs}[1]{\lvert#1\rvert}
\newcommand{\C}{\mathbb{C}}
\newcommand{\N}{\mathbb{N}}
\newcommand{\R}{\mathbb{R}}
\newcommand{\Z}{\mathbb{Z}}
\renewcommand{\epsilon}{\varepsilon}
\renewcommand{\phi}{\varphi}
\renewcommand{\tilde}{\widetilde}
\renewcommand{\hat}{\widehat}
\DeclareMathOperator{\id}{id}
\DeclareFontFamily{U}{mathx}{\hyphenchar\font45}
\DeclareFontShape{U}{mathx}{m}{n}{
      <5> <6> <7> <8> <9> <10>
      <10.95> <12> <14.4> <17.28> <20.74> <24.88>
      mathx10
      }{}
\DeclareSymbolFont{mathx}{U}{mathx}{m}{n}
\DeclareMathAccent{\widecheck}{0}{mathx}{"71}
\DeclareMathAccent{\wideparen}{0}{mathx}{"75}
\numberwithin{equation}{section}
\begin{document}
\selectlanguage{english} % Select either of the sets loaded with babel
%%%%%%%% Title %%%%%%%%%
\date{\today}
%\title{Ideal structure and pure infiniteness of $C^*$-algebras associated to ample groupoids}

\title[Ideal structure and pure infiniteness of ample groupoid $C^*$-algebras]{\texorpdfstring{Ideal structure and pure infiniteness of ample groupoid $C^*$-algebras}{Ideal structure and pure infiniteness of ample groupoid $C^*$-algebras}}

\author{Christian B\"onicke$^{1}$}
\address{Mathematisches Institut der WWU M\"unster,
\newline Einsteinstrasse 62, 48149 M\"unster, Germany}
\email{c.boenicke@wwu.de}
%\urladdr{} % Delete if not wanted.

\author{Kang Li$^{2}$}
\address{Mathematisches Institut der WWU M\"unster,
\newline Einsteinstrasse 62, 48149 M\"unster, Germany}
\email{lik@uni-muenster.de}

\thanks{{$^{1}$} Supported by Deutsche Forschungsgemeinschaft (SFB 878).}

\thanks{{$^{2}$} Supported by the Danish Council for Independent Research (DFF-5051-00037) and partially supported by the DFG (SFB 878).}

\subjclass[2010]{22A22, 46L05, 46L35}
\keywords{Ample groupoid, ideal structure, paradoxical decomposition, purely infinite $C^*$-algebra, type semigroup}

%\address{}
%\email{}
\begin{abstract}
In this paper, we study the ideal structure of reduced $C^*$-algebras $C^*_r(G)$ associated to étale groupoids $G$. In particular, we characterize when there is a one-to-one correspondence between the closed, two-sided ideals in $C_r^*(G)$ and the open invariant subsets of the unit space $G^{(0)}$ of $G$. As a consequence, we show that if $G$ is an inner exact, essentially principal, ample groupoid, then $C_r^*(G)$ is (strongly) purely infinite if and only if every non-zero projection in $C_0(G^{(0)})$ is properly infinite in $C_r^*(G)$. We also establish a sufficient condition on the ample groupoid $G$ that ensures pure infiniteness of $C_r^*(G)$ in terms of paradoxicality of compact open subsets of the unit space $G^{(0)}$.

Finally, we introduce the type semigroup for ample groupoids and also obtain a dichotomy result:
Let $G$ be an ample groupoid with compact unit space which is minimal and topologically principal. If the type semigroup is almost unperforated, then $C_r^*(G)$ is a simple $C^*$-algebra which is either stably finite or strongly purely infinite.

\end{abstract}

%\thanks{}
%\date{\today}
\maketitle
%%%%%%%%% Text starts here %%%%%%%%%%
%\setcounter{tocdepth}{1}
%\tableofcontents
%\parindent 0cm
%\parskip 4pt

\section{Introduction}

The notion of pure infiniteness for simple $C^*$-algebras was introduced by Cuntz in \cite{MR604046} as a $C^*$-algebraic analogue of the type III von Neumann algebras. The full importance of pure infiniteness became apparent when Kirchberg and Phillips proved that two simple, separable, nuclear and purely infinite $C^*$-algebras are stably isomorphic if and only if they are $KK$-equivalent (see \cite{K94,P00}).

In \cite{KR00} Kirchberg and Rørdam initiated the study of non-simple purely infinite $C^*$-algebras. This notion was subsequently strengthened by the same authors in \cite{KR02} to a notion termed strong pure infiniteness, which allows for the following formulation of a classification result of Kirchberg in the non-simple setting, announced in \cite{MR1796912}:
Two separable nuclear strongly purely infinite $C^*$-algebras, whose primitive ideal spaces are homeomorphic to the same space $X$, are stably isomorphic if and only if they are $KK_X$-equivalent, where $KK_X$ is a version of $KK$-theory that respects the primitive ideal spaces.

These classification results inspired many authors to study when different constructions of $C^*$-algebras are purely infinite. This led to a wealth of examples of (strongly) purely infinite $C^*$-algebras arising for example from dynamical systems (see \cite{MR2974211,MR3182957,RS12,MR1420560,MR1774856,PSS17}) or more generally from Fell bundles (see \cite{MR3543802}), and combinatorial objects like graphs (see for example \cite{MR1989499}).
Many of these examples can be viewed as étale groupoid $C^*$-algebras and hence the setting of groupoids serves as a convenient common framework.

Accordingly, this paper will be concerned with purely infinite $C^*$-algebras associated to étale groupoids.
One step in this direction has already been done in \cite{BCS15}, where the authors focus on the simple setting.
We extend the results of \cite{BCS15} in two directions: Firstly we drop the minimality condition, which leads us to the realm of non-simple $C^*$-algebras. And secondly, we give a sufficient condition on the groupoid for pure infiniteness of its reduced groupoid $C^*$-algebra.  
For the first part we wish to employ \cite[Proposition~4.7]{KR00}, which says that a $C^*$-algebra is purely infinite provided that every non-zero hereditary sub-$C^*$-algebra in every quotient contains an infinite projection. In fact, the latter property characterizes pure infiniteness of $C^*$-algebras with the ideal property (IP) (see \cite[Proposition~2.11]{PR07}). Thus, to achieve our goal, we first investigate the ideal structure of reduced groupoid $C^*$-algebras. Although many results on this topic are available for (partial) crossed products (see \cite{Sierakowski10,MR3182957}) and also some partial results for groupoids (see \cite{MR1191252,MR3606454}), a systematic treatment in the spirit of \cite{Sierakowski10} still seems to be missing.
We fill that gap by proving the following result:
\begin{thmx}[see Theorem~\ref{Thm:IdealStructure}]\label{ThmA}
Let $G$ be a locally compact Hausdorff étale groupoid. Then the following are equivalent:
\begin{itemize}
\item[(1)] $G$ is inner exact and satisfies the residual intersection property.
\item[(2)] The canonical map $I\mapsto U_I$, where

$$U_I=\bigcup\limits_{f\in I\cap C_0(G^{(0)})} f^{-1}(\C\setminus \lbrace0\rbrace),$$
gives rise to a one-to-one correspondence between the closed two-sided ideals of $C_r^*(G)$ and the open invariant subsets of $G^{(0)}$.
\end{itemize}
\end{thmx}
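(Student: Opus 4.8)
The plan is to produce an explicit inverse to the map $I\mapsto U_I$, sending an open invariant set $U\subseteq G^{(0)}$ to the ideal $I_U\trianglelefteq C_r^*(G)$ generated by $C_0(U)\subseteq C_0(G^{(0)})$. First I would set up the standard bookkeeping: (a) the map $I\mapsto U_I$ really lands in the open invariant sets (openness is immediate; invariance follows by conjugating elements of $I\cap C_0(G^{(0)})$ by functions supported on open bisections) and is monotone; (b) $I_U=\overline{C_c(G|_U)}$, which as a $C^*$-algebra is a copy of $C_r^*(G|_U)$ sitting inside $C_r^*(G)$ --- a locality property of the reduced norm that I would take from the groupoid literature; (c) the canonical conditional expectation $E\colon C_r^*(G)\to C_0(G^{(0)})$ maps $\overline{C_c(G|_U)}$ into $C_0(U)$, so $I_U\cap C_0(G^{(0)})=C_0(U)$ and hence $U_{I_U}=U$; (d) writing $C=G^{(0)}\setminus U$, the restriction $*$-homomorphism $C_c(G)\to C_c(G|_C)$ extends to a surjection $C_r^*(G)\twoheadrightarrow C_r^*(G|_C)$ whose kernel contains $I_U$ and is compatible with the inclusions of the $C_0$-subalgebras, so that inner exactness of $G$ is precisely the assertion that this kernel equals $I_U$, i.e.\ $C_r^*(G)/I_U\cong C_r^*(G|_C)$. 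I would also record the order-isomorphism $W\mapsto W\cup U$, $V\mapsto V\cap C$ between the open invariant subsets of $C$ and the open invariant subsets of $G^{(0)}$ containing $U$.

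For $(1)\Rightarrow(2)$, by (c) it suffices to prove $I=I_{U_I}$ for every ideal $I\trianglelefteq C_r^*(G)$. The inclusion $I_{U_I}\subseteq I$ holds because a positive element of $C_0(U_I)$ is norm-approximated by functions supported in a compact set $K\subseteq U_I$, and covering $K$ by finitely many sets $\{f_i\neq 0\}$ with $f_i\in I\cap C_0(G^{(0)})^+$ places such functions in the closed ideal generated by $f_1+\dots+f_n\in I$. For the reverse inclusion I would run Sierakowski's argument: put $J=I_{U_I}$ and $C=G^{(0)}\setminus U_I$, so that $J\subseteq I$ and, by inner exactness (d), $C_r^*(G)/J\cong C_r^*(G|_C)$ compatibly with the $C_0$-subalgebras. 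If the ideal $I/J\trianglelefteq C_r^*(G|_C)$ were nonzero, the residual intersection property applied to the closed invariant set $C$ would give a nonzero positive $h'\in (I/J)\cap C_0(C)$; any positive lift $h\in C_0(G^{(0)})$ of $h'$ then lies in $I$ (as $J\subseteq I$), hence in $I\cap C_0(G^{(0)})$, so $h$ vanishes on $G^{(0)}\setminus U_I=C$ and $h'=h|_C=0$ --- a contradiction. Thus $I/J=0$, i.e.\ $I=I_{U_I}$; together with (c) this makes $I\mapsto U_I$ a bijection, and it is an order-isomorphism by (a).

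For $(2)\Rightarrow(1)$, I would first extract inner exactness: for open invariant $U$ the kernel $K$ of $C_r^*(G)\twoheadrightarrow C_r^*(G|_C)$ is an ideal containing $I_U$, hence $K=I_V$ with $V=U_K\supseteq U$; but every $f\in K\cap C_0(G^{(0)})$ vanishes on $C$, so $\{f\neq 0\}\subseteq U$ and therefore $V=U_K\subseteq U$, giving $K=I_U$. For the residual intersection property, fix a closed invariant $C$ with complement $U$; by inner exactness $C_r^*(G|_C)\cong C_r^*(G)/I_U$, so the ideals of $C_r^*(G|_C)$ correspond to the ideals of $C_r^*(G)$ containing $I_U$, hence by $(2)$ to the open invariant sets $V\supseteq U$, hence by the order-isomorphism above to the open invariant subsets of $C$; a routine diagram chase identifies this composite with the canonical map of $G|_C$, so $G|_C$ itself satisfies $(2)$ and in particular every nonzero ideal of $C_r^*(G|_C)$ meets $C_0(C)$.

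The step I expect to be the crux is the reverse inclusion $I\subseteq I_{U_I}$ in $(1)\Rightarrow(2)$: it is the only place where both hypotheses are used together --- inner exactness to realize the quotient $C_r^*(G)/I_{U_I}$ once more as the reduced $C^*$-algebra of a groupoid, so that the intersection property is even meaningful for it, and then the residual intersection property to detect the would-be nonzero ideal $I/I_{U_I}$ inside it. The remaining delicate ingredients, which I would import rather than reprove, are (b)--(d): that $\overline{C_c(G|_U)}\subseteq C_r^*(G)$ is a copy of $C_r^*(G|_U)$, and that $C_r^*(G)/I_U\cong C_r^*(G|_C)$ compatibly with the conditional expectations and the $C_0$-subalgebras. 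Granted these, everything else is routine bookkeeping.
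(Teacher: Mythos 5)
Your proposal is correct and follows essentially the same route as the paper: both arguments rest on the identifications $\mathrm{Ideal}[C_0(U)]=C_r^*(G_U)$ and $U_{I_U}=U$, use inner exactness to realize $C_r^*(G)/I_{U_I}$ as $C_r^*(G_D)$, and then invoke the residual intersection property to kill the residual ideal $I/I_{U_I}$ (and conversely extract both properties from the bijection). The only organizational difference is that the paper factors the two inclusions through the conditional expectation, characterizing inner exactness as $I\subseteq \mathrm{Ideal}[E(I)]$ and the residual intersection property as $\mathrm{Ideal}[E(I)]\subseteq I$ (Lemma~\ref{Lemma:IdealInclusionAndInnerExactness} and Proposition~\ref{Prop:IdealInclusionAndResidualIntersection}), whereas you argue directly at the level of ideals and quotients; the substance is the same.
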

Note that the residual intersection property is satisfied whenever $G$ is essentially principal (the latter notion was introduced by Renault under the term essential freeness in \cite{MR1191252}).
As a consequence of Theorem \ref{ThmA}, we can identify the space of prime ideals of $C_r^*(G)$ with the quasi-orbit space of $G$ (see Theorem~\ref{Theorem:Prime Ideals}). Furthermore, Theorem \ref{ThmA} implies that an inner exact \textit{ample} groupoid satisfying the residual intersection property has the ideal property (IP) (see Corollary~\ref{Cor:IdealSeparationImplies(IP)}). This in turn allows us to deduce that pure infiniteness and strong pure infiniteness are the same for the $C^*$-algebras of the groupoids in question. In view of this observation, it seems natural to focus on the class of ample groupoids. This is an important class of examples, since every Kirchberg algebra satisfying the UCT is Morita equivalent to a $C^*$-algebra associated to a Hausdorff, ample groupoid (see \cite{MR2329002}).

With the description of the ideal structure at hand, we are able to prove an analogue of \cite[Theorem~4.1]{BCS15} in the non-minimal setting:
\begin{thmx}[see Theorem~\ref{Thm:CharacterizationPurelyInfinite}] \label{ThmB}
Suppose $G$ is a locally compact Hausdorff ample groupoid, such that $G$ is inner exact and essentially principal. Then the following are equivalent:
\begin{enumerate}
	\item $C_r^*(G)$ is (strongly) purely infinite.
	\item Every non-zero projection in $C_0(G^{(0)})$ is properly infinite in $C_r^*(G)$.
	\item Every non-zero projection in $C_0(D)$ is infinite in $C_r^*(G_D)$ for every closed $G$-invariant subset $D\subseteq G^{(0)}$.
\end{enumerate}
\end{thmx}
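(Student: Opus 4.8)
The plan is to prove the cycle $(1)\Rightarrow(2)\Rightarrow(3)\Rightarrow(1)$. Before starting I would record that, since $G$ is ample, inner exact and essentially principal, the corollary to Theorem~\ref{ThmA} shows that $C_r^*(G)$ has the ideal property, so that by \cite[Proposition~2.11]{PR07} pure infiniteness and strong pure infiniteness agree for $C_r^*(G)$; hence throughout it is enough to treat plain pure infiniteness, which accounts for the parenthetical ``(strongly)''. The implication $(1)\Rightarrow(2)$ is then purely $C^*$-algebraic: in a purely infinite $C^*$-algebra every non-zero projection is properly infinite (\cite{KR00}), and this applies to the projections of $C_0(G^{(0)})\subseteq C_r^*(G)$.

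For $(2)\Rightarrow(3)$ I would fix a closed invariant set $D\subseteq G^{(0)}$ and a non-zero projection $q\in C_0(D)$. As $G$ is ample and $D$ is closed, $q=1_K$ for a non-empty compact open $K\subseteq D$; since $G^{(0)}$ is locally compact, Hausdorff and totally disconnected, $K$ and the closed set $D\setminus K$ can be separated by a compact open $W\subseteq G^{(0)}$ with $K\subseteq W$ and $W\cap(D\setminus K)=\emptyset$, so that $p:=1_W\in C_0(G^{(0)})$ restricts to $1_K=q$ under the canonical surjection $\pi\colon C_r^*(G)\to C_r^*(G_D)$ (this surjection always exists, $D$ being invariant). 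By $(2)$, $p$ is properly infinite, and proper infiniteness is inherited by the non-zero image $\pi(p)=q$; in particular $q$ is infinite in $C_r^*(G_D)$. This step uses only that $G$ is ample.

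The main work is $(3)\Rightarrow(1)$, for which I would invoke \cite[Proposition~4.7]{KR00}: it suffices to exhibit an infinite projection in every non-zero hereditary sub-$C^*$-algebra of every quotient of $C_r^*(G)$. By Theorem~\ref{ThmA} --- applicable because essential principality implies the residual intersection property --- together with inner exactness, the quotients of $C_r^*(G)$ are precisely the algebras $C_r^*(G_D)$ with $D\subseteq G^{(0)}$ closed and invariant, and each $G_D$ is again a locally compact Hausdorff ample essentially principal groupoid (essential principality of $G_D$ is immediate from Renault's definition, since a closed invariant subset of $D$ is a closed invariant subset of $G^{(0)}$), while hypothesis $(3)$ for $G$ says exactly that every non-zero projection of $C_0(D)$ is infinite in $C_r^*(G_D)$. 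Now fix such a $D$, a non-zero hereditary sub-$C^*$-algebra $\mc H\subseteq C_r^*(G_D)$, and some $0\neq a\in\mc H_+$. The key technical ingredient --- the non-minimal analogue of the estimate underlying \cite[Theorem~4.1]{BCS15} --- is that for a locally compact Hausdorff ample essentially principal groupoid $H$ and any $0\neq b\in C_r^*(H)_+$ there is a non-empty compact open $U\subseteq H^{(0)}$ with $1_U\precsim b$ in $C_r^*(H)$. Applying this to $H=G_D$ and $b=a$ yields a non-empty compact open $U\subseteq D$ with $1_U\precsim a$; a standard subequivalence argument then produces a projection $p\in\bar{a\,C_r^*(G_D)\,a}\subseteq\mc H$ Murray--von Neumann equivalent to $1_U$. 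By $(3)$ the projection $1_U$, and hence $p$, is infinite, so $\mc H$ contains an infinite projection, and \cite[Proposition~4.7]{KR00} finishes the proof.

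I expect the principal obstacle to be the technical estimate $1_U\precsim b$ just used. Adapting the argument of \cite{BCS15} to the non-minimal case, one would first pass to a suitable element of the hereditary subalgebra generated by $b$ and conjugate by a compact open bisection of $H$ in order to reduce to an element that is norm-close to a non-zero positive function $h\in C_0(H^{(0)})$; the essential principality of $H$ is precisely what makes this reduction possible, since it forces the canonical conditional expectation $C_r^*(H)\to C_0(H^{(0)})$ to ``see'' the positive part of $b$. Ampleness of $H$ then supplies a non-empty compact open $U\subseteq H^{(0)}$ on which $h$ is bounded below, and a functional-calculus estimate upgrades this to $1_U\precsim b$. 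The remaining permanence statements --- that $G_D$ inherits ampleness and essential principality, and that the quotient of $C_r^*(G)$ by the ideal corresponding to $D$ is $C_r^*(G_D)$ once $G$ is inner exact --- are routine and would be dealt with quickly.
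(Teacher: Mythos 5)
Your proposal is correct and follows essentially the same route as the paper: the same cycle $(1)\Rightarrow(2)\Rightarrow(3)\Rightarrow(1)$, the same reduction of $(3)\Rightarrow(1)$ to \cite[Proposition~4.7]{KR00} via the identification of quotients with $C_r^*(G_D)$, and the same key ingredient (the paper simply cites \cite[Lemma~3.2]{BCS15}, which only needs topological principality of $G_D$, to get a non-zero positive $h\in C_0(D)$ with $h\precsim b$, then extracts a projection from $\overline{hC_0(D)h}$ by total disconnectedness, rather than re-deriving the compact-open estimate as you sketch). The only cosmetic difference is that you package the BCS15 lemma as producing $1_U\precsim b$ directly instead of factoring through $p\precsim h\precsim b$.
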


Our theorem reduces the question whether $C_r^*(G)$ is purely infinite (i.e. all non-zero positive elements are properly infinite) to whether all non-zero \emph{projections} coming from $C_0(G^{(0)})$ are properly infinite. Such projections will just be characteristic functions supported in compact open subsets of $G^{(0)}$. This gives hope to find sufficient conditions on the compact open subsets of $G^{(0)}$ to verify pure infiniteness of $C_r^*(G)$.
Inspired by the work of Rørdam and Sierakowski in \cite{RS12} and Kerr and Nowak in \cite{MR2974211}, we introduce a notion of paradoxical decomposition for subsets of the unit space and show how paradoxical decompositions yield (properly) infinite projections in $C_r^*(G)$.
As an immediate consequence, we obtain the following application to nuclear dimension of groupoid $C^*$-algebras:
\begin{corx}[see Corollary~\ref{Cor: Every set is paradoxical implies purely infinite}]
	Let $G$ be a locally compact Hausdorff ample groupoid, which is essentially principal and inner exact. Let $\mathcal{B}$ be a basis for the topology of $G^{(0)}$ consisting of compact open sets. Suppose each $A\in \mathcal{B}$ is paradoxical. Then $C_r^*(G)$ is (strongly) purely infinite.
\end{corx}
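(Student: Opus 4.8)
The plan is to reduce everything to Theorem~\ref{ThmB}. Since $G$ is a locally compact Hausdorff ample groupoid that is essentially principal and inner exact, the equivalence (1)$\Leftrightarrow$(2) of Theorem~\ref{ThmB} applies, so it suffices to show that every non-zero projection $p\in C_0(G^{(0)})$ is properly infinite in $C_r^*(G)$. Any such $p$ takes only the values $0$ and $1$, and its support $K=\{x\in G^{(0)}:p(x)=1\}$ is open and, since $p$ vanishes at infinity, compact; hence $p=1_K$ for a non-empty compact open set $K\subseteq G^{(0)}$. As $\mathcal{B}$ is a basis consisting of compact open sets, each point of $K$ lies in some member of $\mathcal{B}$ contained in $K$, and compactness of $K$ then yields finitely many $A_1,\dots,A_n\in\mathcal{B}$ with $K=A_1\cup\dots\cup A_n$. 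By hypothesis every $A_i$ is paradoxical, so by the results preceding this corollary, which produce from a paradoxical decomposition of a compact open set $A$ a witness that $1_A\oplus 1_A\precsim 1_A$ in $C_r^*(G)$, each projection $1_{A_i}\in C_0(G^{(0)})\subseteq C_r^*(G)$ is properly infinite.

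It remains to prove that a finite union of compact open sets whose characteristic functions are properly infinite again has properly infinite characteristic function; by induction on the number of sets this reduces to the case of two sets $A,B$. I would argue entirely with Cuntz subequivalence inside $C_r^*(G)$. As $G^{(0)}$ is zero-dimensional, $A\cap B$ and $B\setminus A$ are again compact open, and in $C_0(G^{(0)})$ one has the orthogonal decompositions $1_{A\cup B}=1_A+1_{B\setminus A}$ and $1_B=1_{A\cap B}+1_{B\setminus A}$; recall also that orthogonal projections $e\perp f$ satisfy $e+f\sim e\oplus f$ in $M_2(C_r^*(G))$, and that if $e$ is properly infinite and $g$ is a projection with $g\le e$ then $e\oplus g\precsim e\oplus e\precsim e$. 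Using these facts one checks
\[
1_{A\cup B}\oplus 1_{A\cup B}\;\sim\;1_A\oplus 1_A\oplus 1_{B\setminus A}\oplus 1_{B\setminus A}\;\precsim\;1_A\oplus 1_B ,
\]
absorbing one copy of $1_A$ into the properly infinite $1_A$ and the two copies of $1_{B\setminus A}\le 1_B$ into the properly infinite $1_B$, and, rewriting $1_B=1_{A\cap B}+1_{B\setminus A}$ and absorbing $1_{A\cap B}\le 1_A$,
\[
1_A\oplus 1_B\;\sim\;1_A\oplus 1_{A\cap B}\oplus 1_{B\setminus A}\;\precsim\;1_A\oplus 1_{B\setminus A}\;\sim\;1_{A\cup B}.
\]
Hence $1_{A\cup B}\oplus 1_{A\cup B}\precsim 1_{A\cup B}$, so $1_{A\cup B}$ is properly infinite.

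Combining the two steps, $1_K$ is properly infinite for every non-empty compact open $K\subseteq G^{(0)}$, so condition (2) of Theorem~\ref{ThmB} holds and $C_r^*(G)$ is (strongly) purely infinite. The key external ingredient is the passage from paradoxicality of a compact open set to proper infiniteness of the corresponding projection in $C_r^*(G)$, which is exactly the machinery developed earlier in the paper; the rest, representing projections of $C_0(G^{(0)})$ by compact open sets, covering an arbitrary compact open set by finitely many basic ones, and manipulating Cuntz subequivalence, is elementary. I expect the union argument to need the most care: the two chains of subequivalences must be routed through the overlap $A\cap B$ so that no circularity is introduced, and one must check that the earlier results deliver \emph{proper} infiniteness of $1_A$, not merely ordinary infiniteness, since that is what feeds Theorem~\ref{ThmB}(2).
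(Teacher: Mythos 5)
Your proof is correct, but it takes a genuinely different route through Theorem~\ref{Thm:CharacterizationPurelyInfinite} than the paper does. You verify condition (2) of that theorem: you write an arbitrary non-zero projection of $C_0(G^{(0)})$ as $1_K$ for $K$ compact open, cover $K$ by finitely many basic sets, and then prove that proper infiniteness of characteristic functions is preserved under finite unions via the Cuntz-subequivalence chain $1_{A\cup B}\oplus 1_{A\cup B}\precsim 1_A\oplus 1_B\precsim 1_{A\cup B}$; that chain is valid (orthogonal projections satisfy $e+f\sim e\oplus f$, subprojections are Cuntz-below, and absorption into properly infinite summands works exactly as you say), and it does require, as you note, that Proposition~\ref{Prop:ParadSubsetsGivePropInfiniteProjections} delivers \emph{proper} infiniteness of each $1_{A_i}$ -- which it does for $(G^a,2,1)$-paradoxical sets, the meaning of ``paradoxical'' intended here. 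The paper instead verifies condition (3): given a closed invariant $D$ and a projection $1_U\in C_0(D)$, it picks a single basic set $A\in\mathcal{B}$ with $\emptyset\neq A\cap D\subseteq U$, pushes the properly infinite projection $1_A$ through the quotient map $\pi:C_r^*(G)\to C_r^*(G_D)$ to obtain the non-zero, hence properly infinite, subprojection $1_{A\cap D}\leq 1_U$, and concludes $1_U$ is infinite. That route avoids the finite-union lemma entirely, because condition (3) only asks for infiniteness and tolerates passing to a subprojection; the price is that it must work in all quotients $C_r^*(G_D)$. Your approach stays inside $C_r^*(G)$ itself and is a bit longer, but the extra union lemma you prove is a reusable general fact about properly infinite projections over a zero-dimensional base; both arguments are complete and rest on the same external input, namely Proposition~\ref{Prop:ParadSubsetsGivePropInfiniteProjections} and Theorem~\ref{Thm:CharacterizationPurelyInfinite}.
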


We also introduce an analogue of the type semigroup, which allows us to phrase paradoxicality in algebraic terms.
Very recently, Rainone showed in \cite[Theorem~5.6]{MR3669119} that for a large class of simple unital crossed product $C^*$-algebras there is a dichotomy: the $C^*$-algebra is either purely infinite or it admits a tracial state. Using our methods we can easily extend his result to the realm of groupoids:

\begin{thmx}[see Theorem~\ref{purely inf}] 
Let $G$ be a minimal and topologically principal ample groupoid with compact unit space. Assume moreover that the type semigroup is almost unperforated. Then the following are equivalent:
\begin{enumerate}
	\item The $C^*$-algebra $C_r^*(G)$ is (strongly) purely infinite.
	\item The $C^*$-algebra $C_r^*(G)$ admits no non-trivial lower-semicontinuous $2$-quasitraces.
	\item The $C^*$-algebra $C_r^*(G)$ is not stably finite.
\end{enumerate}
\end{thmx}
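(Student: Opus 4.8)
The plan is to play the Tarski-type dichotomy for the type semigroup $S(G)$ against Theorem~\ref{Thm:CharacterizationPurelyInfinite}. First I would record the reductions that make that theorem applicable. Since $G$ is minimal, $\emptyset$ and $G^{(0)}$ are the only open (equivalently, closed) $G$-invariant subsets of $G^{(0)}$; hence $G$ is automatically inner exact, and since the only nontrivial closed invariant set is $G^{(0)}$ itself, on which $G$ is topologically principal by hypothesis, $G$ is essentially principal. Moreover $C^*_r(G)$ is unital (as $G^{(0)}$ is compact) and, by minimality together with topological principality, simple. Consequently Theorem~\ref{Thm:CharacterizationPurelyInfinite} gives that $C^*_r(G)$ is (strongly) purely infinite if and only if every non-zero projection in $C_0(G^{(0)})$ is properly infinite in $C^*_r(G)$; since $G^{(0)}$ is totally disconnected these projections are precisely the $1_U$ for non-empty compact open $U\subseteq G^{(0)}$.

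Then I would invoke the dichotomy inside $S(G)$. By the type-semigroup analogue of Tarski's dichotomy (the refinement property of $S(G)$ makes the classical argument go through), exactly one of the following holds: either $[G^{(0)}]$ is properly infinite in $S(G)$, i.e.\ $2[G^{(0)}]\le[G^{(0)}]$; or there is a state $\phi$ on $S(G)$ with $\phi([G^{(0)}])=1$. In the second case $\phi$ restricts to a finitely additive $G$-invariant probability ``measure'' on the compact open subsets of $G^{(0)}$; since $G^{(0)}$ is compact and totally disconnected, a countable disjoint union of compact open sets is already finite, so this extends to a genuine $G$-invariant Radon probability measure $\mu$ on $G^{(0)}$. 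Composing with the canonical conditional expectation $E\colon C^*_r(G)\to C_0(G^{(0)})$ produces a tracial state $\tau=\mu\circ E$; minimality forces $\supp\mu=G^{(0)}$ and $E$ is faithful, so $\tau$ is faithful and $C^*_r(G)$ is stably finite. In this branch $C^*_r(G)$ is not purely infinite and carries the non-trivial $2$-quasitrace $\tau$, so (1), (2) and (3) all fail.

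In the remaining branch, $2[G^{(0)}]\le[G^{(0)}]$, I would show (1)--(3) all hold. Iterating gives $m[G^{(0)}]\le[G^{(0)}]$ for every $m\in\N$. Fix a non-empty compact open $U\subseteq G^{(0)}$; by minimality and compactness of $G^{(0)}$, finitely many images of $U$ under compact open bisections cover $G^{(0)}$, so $[G^{(0)}]\le n[U]$ in $S(G)$ for some $n$. Hence
\[
  (2n+2)[U]\ \le\ (2n+2)[G^{(0)}]\ \le\ [G^{(0)}]\ \le\ n[U],
\]
that is $(n+1)\cdot(2[U])\le n\cdot[U]$. \emph{This is precisely where almost unperforation of $S(G)$ is used}: it upgrades the last inequality to $2[U]\le[U]$, so every non-empty compact open subset of $G^{(0)}$ is paradoxical. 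The paradoxical-decomposition machinery then shows that $1_U$ is properly infinite in $C^*_r(G)$ for every such $U$, so by Theorem~\ref{Thm:CharacterizationPurelyInfinite} the algebra $C^*_r(G)$ is (strongly) purely infinite; thus (1) holds, hence (3), and (2) follows because a purely infinite $C^*$-algebra carries no non-trivial lower semicontinuous $2$-quasitrace.

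Finally I would assemble the equivalences: the two branches show that $C^*_r(G)$ is either stably finite (first branch, in which all of (1)--(3) fail) or (strongly) purely infinite (second branch, in which all of (1)--(3) hold), which immediately gives (1)$\Leftrightarrow$(2)$\Leftrightarrow$(3). The step I expect to be the main obstacle is the transfer of comparison between $S(G)$ and $C^*_r(G)$: establishing the type-semigroup form of Tarski's dichotomy in the required generality, and --- in the paradoxical branch --- converting the inequality $2[U]\le[U]$ in $S(G)$, available only after invoking almost unperforation, into genuine proper infiniteness of the projection $1_U$ inside $C^*_r(G)$. The remaining ingredients --- passing from a $G$-invariant measure to a faithful tracial state, and the standard facts about $2$-quasitraces on purely infinite and on stably finite unital $C^*$-algebras --- are routine.
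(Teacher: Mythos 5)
Your overall strategy coincides with the paper's proof of Theorem~\ref{purely inf}: Tarski's theorem on the type semigroup, almost unperforation to produce $(G^a,2,1)$-paradoxical decompositions, lifting a normalized state on $S(G,G^a)$ to a faithful tracial state, and Theorem~\ref{Thm:CharacterizationPurelyInfinite} together with Proposition~\ref{Prop:ParadSubsetsGivePropInfiniteProjections} (via Corollary~\ref{Cor: Every set is paradoxical implies purely infinite}) to turn paradoxicality of compact open sets into pure infiniteness. The paper arranges this as a cycle through five conditions, whereas you run a dichotomy at the single class $[G^{(0)}]$ and then propagate to arbitrary $[U]$ by minimality --- which is exactly the order-unit argument of Lemma~\ref{Lemma:Simple Type Semigroup}. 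Your passage from a normalized state to a faithful trace (finitely additive invariant measure on clopen sets, countable additivity for free by compactness, Riesz representation, compose with $E$) is a legitimate substitute for the paper's route through $K_0(C(G^{(0)}))$ and the Blackadar--R{\o}rdam lifting theorem used in Lemma~\ref{Lemma:faithful states on type sgrp lift to traces}; deducing faithfulness from invariance of $\supp\mu$ and minimality is also fine.

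The one step that is wrong as written is your dichotomy. You claim that either $2[G^{(0)}]\le[G^{(0)}]$ or a normalized state exists, ``because the refinement property makes the classical argument go through.'' It does not. Tarski's theorem (Theorem~\ref{Thm:ExistenceOfStatesOnTypeSemigroup}) only gives the dichotomy between the existence of a normalized state and the existence of \emph{some} $n$ with $(n+1)[G^{(0)}]\le n[G^{(0)}]$. Upgrading the latter to $2[G^{(0)}]\le[G^{(0)}]$ is the classical Schr\"oder--Bernstein/cancellation step, which relies on countably infinite decompositions available for full power-set type semigroups but \emph{not} for $S(G,G^a)$, whose elements are built from compact open sets; this failure is precisely why almost unperforation appears as a hypothesis, and the paper points to the Ara--Exel examples of type semigroups of this kind that are not almost unperforated. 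The gap is harmless here only because almost unperforation is assumed: from $(n+1)x\le nx$ one gets $(2n+2)x\le nx$, i.e.\ $(n+1)(2x)\le nx$, hence $2x\le x$. Either make that use of almost unperforation explicit at the dichotomy stage, or (as the paper does) carry the inequality $(n+1)[A]\le n[A]$ through the argument and invoke almost unperforation once, at the end, via Lemma~\ref{Lem:ParadoxicalityInTermsOfTypeSemigrp}.
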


Finally, the following result shows the usefulness of paradoxical decompositions, which allow us to characterize the stable finiteness of $C_r^*(G)$ (compare \cite[Theorem~4.9]{MR3669119}):

\begin{thmx}[see Theorem~\ref{stably finite}]
	Let $G$ be a minimal ample groupoid with compact unit space. Then the following are equivalent:
	\begin{enumerate}
		\item The $C^*$-algebra $C_r^*(G)$ admits a faithful tracial state.
		\item The $C^*$-algebra $C_r^*(G)$ is stably finite.
		\item Every clopen subset of $G^{(0)}$ is completely non-paradoxical.
	\end{enumerate}
\end{thmx}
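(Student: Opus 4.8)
The plan is to prove the cycle of implications $(1)\Rightarrow(2)\Rightarrow(3)\Rightarrow(1)$. The implication $(1)\Rightarrow(2)$ is the standard observation that a unital $C^*$-algebra admitting a faithful tracial state is finite: $C_r^*(G)$ is unital because $G^{(0)}$ is compact, and if $v^*v=1$ then $\tau(1-vv^*)=\tau(1)-\tau(v^*v)=0$ forces $vv^*=1$. Applying this to each matrix amplification $M_n(C_r^*(G))$, equipped with the faithful tracial state $\tau\otimes\mathrm{tr}_n$, shows that $C_r^*(G)$ is stably finite.

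For $(2)\Rightarrow(3)$ I would argue by contraposition. Suppose some clopen set $A\subseteq G^{(0)}$ is not completely non-paradoxical, so that $(n+1)[A]\le n[A]$ in the type semigroup $S(G)$ for some $n\ge 1$; we may assume $A\neq\emptyset$. The order relation on $S(G)$ is implemented by compact open bisections, and for such a bisection $V$ the function $1_V\in C_c(G)$ is a partial isometry with $1_V^*1_V=1_{s(V)}$ and $1_V1_V^*=1_{r(V)}$. Assembling the bisections witnessing the inequality into a single partial isometry --- exactly as in the passage from paradoxical decompositions to infinite projections developed earlier --- one obtains $w\in M_{n+1}(C_r^*(G))$ with $w^*w=\bigoplus^{n+1}1_A$ and $ww^*\le\bigl(\bigoplus^{n}1_A\bigr)\oplus 0$. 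The latter is a proper subprojection of the former (they differ by the nonzero projection on the last coordinate), so $\bigoplus^{n+1}1_A$ is equivalent to a proper subprojection of itself and hence is an infinite projection in $M_{n+1}(C_r^*(G))$; therefore $C_r^*(G)$ is not stably finite.

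The substantive implication is $(3)\Rightarrow(1)$. Here I would first invoke the Tarski-type alternative for the groupoid type semigroup: a clopen set is completely non-paradoxical if and only if some state on $S(G)$ assigns it the value $1$ --- the easy direction being that $\mu([A])=1$ together with $(n+1)[A]\le n[A]$ would give $n+1\le n$. Applying this to $A=G^{(0)}$, which is clopen since $G^{(0)}$ is compact, produces a state $\mu$ on $S(G)$ with $\mu([G^{(0)}])=1$. Its restriction to the Boolean algebra of compact open subsets of $G^{(0)}$ is a finitely additive $G$-invariant probability content; because $G^{(0)}$ is compact, Hausdorff, and totally disconnected, any decreasing sequence of compact open sets with empty intersection is eventually empty, so this content is automatically continuous at $\emptyset$ and extends uniquely to a $G$-invariant regular Borel probability measure $\mu$ on $G^{(0)}$. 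Minimality of $G$ then forces $\mu$ to have full support: for a nonempty clopen $A$, compactness of $G^{(0)}$ and minimality give finitely many compact open bisections $V_1,\dots,V_k$ with $s(V_i)\subseteq A$ and $\bigcup_i r(V_i)=G^{(0)}$, whence $1=\mu(G^{(0)})\le\sum_i\mu(r(V_i))=\sum_i\mu(s(V_i))\le k\,\mu(A)$.

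Finally I would put $\tau=\mu\circ E$, where $E\colon C_r^*(G)\to C_0(G^{(0)})$ is the canonical conditional expectation, faithful because we work with the reduced algebra. Then $\tau$ is a state; it is tracial by the standard computation on $C_c(G)$ using $G$-invariance of $\mu$, extended to $C_r^*(G)$ by continuity; and it is faithful, since $\tau(a^*a)=\mu(E(a^*a))=0$ together with $E(a^*a)\ge 0$ and fullness of the support of $\mu$ gives $E(a^*a)=0$, hence $a=0$. This closes the cycle. The only genuinely difficult ingredient is the production of the state on $S(G)$ in $(3)\Rightarrow(1)$, that is, the Tarski-type alternative for the groupoid type semigroup; the translations between the type semigroup, projections in matrix amplifications, and $G$-invariant measures, together with the verification that $\mu\circ E$ is a faithful trace, are routine or have already been recorded.
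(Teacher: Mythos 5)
Your proof is correct, and its overall skeleton coincides with the paper's: $(1)\Rightarrow(2)$ is standard, $(2)\Rightarrow(3)$ is the construction of an infinite projection in a matrix amplification from a paradoxical decomposition (this is exactly Proposition~\ref{Prop:Stably finite implies non-paradoxical}), and $(3)\Rightarrow(1)$ runs through Tarski's theorem (Theorem~\ref{Thm:ExistenceOfStatesOnTypeSemigroup}) applied to the type semigroup via Lemma~\ref{Lem:ParadoxicalityInTermsOfTypeSemigrp}, followed by $\tau=\tau_0\circ E$. Where you genuinely diverge is in how the state on $S(G,G^a)$ is promoted to a trace. The paper first uses minimality to prove that $S(G,G^a)$ is a \emph{simple} monoid (Lemma~\ref{Lemma:Simple Type Semigroup}), deduces that any non-trivial state is automatically faithful and finite-valued, and then in Lemma~\ref{Lemma:faithful states on type sgrp lift to traces} lifts it through the monoid homomorphism $\rho:C(G^{(0)},\Z)^+\to S(G,G^a)$ of Proposition~\ref{Prop:K-theoryTypeSemigroup}, identifies $C(G^{(0)},\Z)$ with $K_0(C(G^{(0)}))$, and invokes the Blackadar--R{\o}rdam extension theorem to produce the state $\tau_0$ on $C(G^{(0)})$. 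You instead restrict the semigroup state to a finitely additive invariant content on the Boolean algebra of compact open sets, upgrade it to a Borel probability measure (the countable additivity argument from compactness is fine, though strictly speaking even a finitely additive content suffices, since locally constant functions are dense in $C(G^{(0)})$ and the content already defines a bounded positive functional there), and obtain faithfulness from full support of the measure via a covering argument from minimality --- which is precisely the covering argument the paper uses to prove simplicity of $S(G,G^a)$, just transplanted from the semigroup to the measure. Your route is somewhat more elementary in that it bypasses $K_0$ and Blackadar--R{\o}rdam entirely; the paper's route has the advantage that the intermediate statements (simplicity of the type semigroup, the homomorphism $\rho$) are reusable elsewhere, e.g.\ in Theorem~\ref{purely inf}. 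One small point worth making explicit in your version: the finiteness of $\mu$ on all compact open subsets of $G^{(0)}$ follows from monotonicity and $[A]\le[G^{(0)}]$, which you use implicitly when calling the restriction a probability content.
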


\section{Preliminaries}
In this section we recall some definitions and basic facts of the theory of étale groupoids and their reduced $C^*$-algebras. We also include detailed proofs for facts that we use extensively throughout the paper.
\subsection{Groupoids}
A \textit{groupoid} is a set $G$ together with a distinguished subset $G^{(2)}\subseteq G\times G$, called the set of \textit{composable pairs}, a product map $G^{(2)}\rightarrow G$ denoted by $(g,h)\mapsto gh$, and an inverse map $G\rightarrow G$, written $g\mapsto g^{-1}$, such that:
\begin{enumerate}
	\item If $(g_1,g_2),(g_2,g_3)\in G^{(2)}$, then so are $(g_1g_2,g_3)$ and $(g_1,g_2g_3)$ and their products coincide, meaning $(g_1g_2)g_3=g_1(g_2g_3)$;
	\item for all $g\in G$ we have $(g,g^{-1})\in G^{(2)}$; and
	\item for any $(g,h)\in G^{(2)}$ we have $g^{-1}(gh)=h$ and $(gh)h^{-1}=g$.
\end{enumerate}
Every groupoid comes with a subset
$$G^{(0)}=\lbrace gg^{-1}\mid g\in G\rbrace=\lbrace g^{-1}g\mid g\in G\rbrace$$
called the set of \textit{units} of $G$, and two maps $r,d:G\rightarrow G^{(0)}$ given by
$r(g)=gg^{-1}$ and $d(g)=g^{-1}g$ called \textit{range} and \textit{domain} maps respectively.
A subgroupoid of $G$ is a subset $H\subseteq G$ which is closed under the product and inversion meaning that $gh\in H$ for all $(g,h)\in G^{(2)}\cap H\times H$ and $g^{-1}\in H$ for all $g\in H$.\\
When $G$ is endowed with a locally compact Hausdorff topology under which the product and inversion maps are continuous, $G$ is called a locally compact groupoid. A \textit{bisection} is a subset $S\subseteq G$ such that the restrictions of the range and domain maps to $S$ are local homeomorphisms onto open subsets of $G$. We will denote the set of all open bisections by $G^{op}$. A locally compact, Hausdorff groupoid is called \textit{étale} if there is a basis for the topology of $G$ consisting of open bisections. It follows that $G^{(0)}$ is open in $G$. Recall that it is also closed, since $G$ is assumed to be Hausdorff. A topological groupoid is called \emph{ample} if it has a basis of compact open bisections. We will write $G^a$ for the subset of $G^{op}$ consisting of all compact open bisections. If $G$ is a locally compact, Hausdorff and étale groupoid, then $G$ is ample if and only if $G^{(0)}$ is totally disconnected (see \cite[Proposition 4.1]{E10}).

For a subset $D\subseteq G^{(0)}$ write
$$G_D:=\lbrace g\in G\mid d(g)\in D\rbrace,\ G^D:=\lbrace g\in G\mid r(g)\in D\rbrace,\ \text{and } G_D^D:=G_D\cap G^D.$$
If $D=\lbrace u\rbrace$ consists of a single point $u\in G^{(0)}$ we will omit the braces in our notation and write $G_u:=G_D$, $G^u:=G^D$ and $G_u^u:=G_D^D$. Note that $G_u^u$ is a group, often called the \textit{isotropy} at $u$. If all the isotropy groups are trivial, i.e. $G_u^u=\lbrace u\rbrace$ for all $u\in G^{(0)}$, we say that $G$ is a \textit{principal} groupoid. There is also a weaker topological version of principality: We say that $G$ is \textit{topologically principal} if the set of all units with trivial isotropy $\lbrace u\in G^{(0)}\mid G_u^u=\lbrace u\rbrace\rbrace$ is dense in $G^{(0)}$.

A subset $D\subseteq G^{(0)}$ is called \textit{invariant}, if $d(g)\in D$ implies $r(g)\in D$ for all $g\in G$. This is equivalent to saying that $G_D=G^D$ and hence $G_D$ is a subgroupoid of $G$ with unit space $D$. Furthermore note that $D$ is invariant if and only if $G^{(0)}\setminus D$ is invariant. If $G$ is étale and $D$ is a closed invariant subset of $G^{(0)}$, then $G_D$ and $G_{G^{(0)}\setminus D}$ will be étale groupoids in their own right with base spaces $D$ and $G^{(0)}\setminus D$ respectively.

We say that $G$ is \textit{minimal}, if there are no proper, non-trivial closed invariant subsets in $G^{(0)}$.
Moreover $G$ is called \textit{essentially principal} if the closed subgroupoid $G_D$ is topologically principal for every closed invariant subset $D\subseteq G^{(0)}$.
\subsection{Groupoid $C^*$-algebras}
Let $G$ be an étale groupoid. Consider the complex vector space $C_c(G)$ of compactly supported complex-valued continuous functions on $G$. Since each $G^u$ is discrete we can define a convolution product on $C_c(G)$ by
$$f_1\ast f_2(g)=\sum\limits_{h\in G^{r(g)}}f_1(h)f_2(h^{-1}g)$$
and an involution by the formula
$$f^*(g)=\overline{f(g^{-1})}.$$
With these operations $C_c(G)$ becomes a $*$-algebra. For each $u\in G^{(0)}$ consider the regular representation $\pi_u:C_c(G)\rightarrow B(\ell^2(G_u))$ given by 
$$\pi_u(f)\xi=f\ast\xi$$
The reduced $C^*$-norm on $C_c(G)$ is defined by
$$\norm{f}_r:=\sup\limits_{u\in G^{(0)}}\norm{\pi_u(f)}$$
and we let $C_r^*(G)$ be the completion of $C_c(G)$  with respect to the reduced norm $\norm{\cdot}_r$.

Let us now recall some well-known facts, that we will use extensively throughout the paper. If $G$ is an étale groupoid, there is a canonical map $\iota_0:C_c(G^{(0)})\rightarrow C_c(G)$ given by extending a function by zero on $G\setminus G^{(0)}$.
For $f_1,f_2\in C_c(G^{(0)})$ we compute
$$f_1\ast f_2(g)=\sum\limits_{h\in G^{r(g)}}f_1(h)f_2(h^{-1}g)=f_1(r(g))f_2(g).$$
The latter product is zero unless $g\in G^{(0)}$ in which case $g=r(g)$. Hence $\iota_0$ is a $*$-homomorphism.
Let us show that $\iota_0$ is isometric:
For $f\in C_c(G^{(0)})$ and $\xi\in \ell^2(G_u)$ we have
$(\pi_u(f)\xi)(h)=f(r(h))\xi(h)$ and hence $$\norm{\pi_u(f)\xi}^2=\sum\limits_{h\in G_u} \abs{(\pi_u(f)\xi)(h)}^2 = \sum\limits_{h\in G_u} \abs{f(r(h))\xi(h)}^2\leq \norm{f}_\infty^2 \norm{\xi}^2.$$ 
It follows that $\norm{\pi_u(f)}\leq \norm{f}_\infty$ for all $u\in G^{(0)}$ and hence $\norm{f}_r\leq \norm{f}_\infty$.
For the converse inequality we use that for every $u\in G^{(0)}$ we have $\pi_u(f)\delta_u=f(u)\delta_u$ to compute $\abs{f(u)}=\norm{\pi_u(f)\delta_u}\leq \norm{\pi_u(f)}\leq \norm{f}_r$. Hence we have $\norm{f}_\infty=\norm{f}_r$ as desired.
Consequently, $\iota_0$ extends to an embedding $\iota:C_0(G^{(0)})\hookrightarrow C_r^*(G)$.

In the other direction we consider the restriction map $E_0:C_c(G)\rightarrow C_c(G^{(0)})$. It is clearly linear and restricts to the identity on $C_c(G^{(0)})$. Note that for any $u\in G^{(0)}$ and $f\in C_c(G)$ we have the identity
\begin{equation}\label{FormulaRegularRep}
\pi_u(f)\delta_u=\sum\limits_{g\in G_u}f(g)\delta_g
\end{equation}
in $\ell^2(G_u)$. And hence for any $f\in C_c(G)$ we have
\begin{equation}\label{FormulaCompactlySupportedFunction}
f(g)=\langle \pi_{d(g)}(f)\delta_{d(g)},\delta_g\rangle.
\end{equation}
Applying the Cauchy-Schwartz inequality we get the estimate
$$\abs{f(g)}\leq \norm{\pi_{d(g)}(f)}\leq \norm{f}_r$$
for all $g\in G^{(0)}$. Consequently, $\norm{E_0(f)}\leq \norm{f}_r$ for all $f\in C_c(G)$. Thus we have $\norm{E_0}\leq 1$. It is not hard to see that actually $\norm{E_0}=1$ by picking a function $f\in C_c(G^{(0)})$ with $\norm{f}_\infty=1$ and using that $\iota$ is isometric.

Consequently, $E_0$ extends to a linear map $E:C_r^*(G)\rightarrow C_0(G^{(0)})$ of norm $1$, which restricts to the identity on $C_0(G^{(0)})$. One easily computes
$$E(f^*\ast f)(u)=\sum\limits_{g\in G^u}\abs{f(g^{-1})}^2$$
Hence $E(f^*\ast f)\geq 0$ for all $f\in C_c(G)$ and by continuity we obtain that $E$ is positive. Thus, $E$ is a conditional expectation.
Note that by (\ref{FormulaCompactlySupportedFunction}) we have
$E(f)(u)=\langle \pi_u(f)\delta_u,\delta_u\rangle$. By continuity of $E$ we get the formula
\begin{equation}\label{FormulaConditionalExpectation}
E(x)(u)=\langle \pi_u(x)\delta_u,\delta_u\rangle
\end{equation}
for all $x\in C_r^*(G)$ and $u\in G^{(0)}$.

We claim that $E$ is faithful. So let $x\in C_r^*(G)$ with $E(x^*x)=0$. To see that $x=0$ it is enough to show that $\pi_u(x)=0$ for all $u\in G^{(0)}$.
So fix $u\in G^{(0)}$ and $g\in G_u$. Let $\varphi\in C_c(G)$ be supported in a bisection with $\varphi(g)=1$ and $0\leq \varphi\leq 1$.
Then we have $\pi_u(\varphi)\delta_u=\delta_g$ in $\ell^2(G_u)$ by equation \ref{FormulaRegularRep}.
Now for any $f\in C_c(G)$ one has $\abs{E(\varphi^*\ast f\ast \varphi)(u)}=\abs{\varphi(g^{-1})f(d(g))\varphi(g^{-1})}\leq \abs{f(d(g))}\leq \norm{E(f)}_\infty$. By continuity we get $\abs{E(\varphi^*\ast y\ast \varphi)(u)}\leq\norm{E(y)}_\infty$ for all $y\in C_r^*(G)$. In particular, we have 
$$0=\norm{E(x^*x)}_\infty \geq \abs{E(\varphi^*x^*x\varphi)(u)}=\abs{\langle \pi_u(\varphi^*x^*x\varphi)\delta_u,\delta_u\rangle}=\norm{\pi_u(x)\delta_g}^2$$
and hence conclude $\pi_u(x)=0$.

\section{Separating Ideals}
In this section we will describe our results on the ideal structure of reduced groupoid $C^*$-algebras.
Let us first recall some basic facts from the theory of ideals in $C^*$-algebras:
For a $C^*$-algebra $A$, let $\mathcal{I}(A)$ be the lattice of closed, two-sided ideals in $A$. Equip $\mathcal{I}(A)$ with 
the \textit{Fell topology} a sub-base for which is given by sets of the form
$$\mathcal{O}_I:=\lbrace J\in \mathcal{I}(A)\mid I\not\subseteq J\rbrace, \ I\in \mathcal{I}(A).$$
If $A$ and $B$ are two $C^*$-algebras and $\varphi:\mathcal{I}(A)\rightarrow\mathcal{I}(B)$ is a map preserving the inclusion of ideals, then $\varphi$ is continuous.

Throughout this section $G$ will be an étale (locally compact Hausdorff) groupoid. We will consider the lattice $\mathcal{O}(G^{(0)})$ of all open invariant subsets of $G^{(0)}$. If we identify $\mathcal{I}(C_0(G^{(0)}))$ with the set of open subsets of $G^{(0)}$, we obtain a natural topology on $\mathcal{O}(G^{(0)})$ by restricting the Fell topology from $\mathcal{I}(C_0(G^{(0)}))$.

Let $I\in \mathcal{I}(C_r^*(G))$ be an ideal. Then $I\cap C_0(G^{(0)})$ is an ideal in $C_0(G^{(0)})$. Hence it corresponds to the open subset
$$U_I:=\bigcup\limits_{f\in I\cap C_0(G^{(0)})} f^{-1}(\C\setminus \lbrace0\rbrace).$$
\begin{lem}\label{Lemma:IdealsGiveInvariantSubsets} Let $G$ be an étale groupoid and $I,J\in \mathcal{I}(C_r^*(G))$ be two ideals. Then the following hold:
\begin{enumerate}
\item The open subset $U_I\subseteq G^{(0)}$ is invariant.
\item If $I\subseteq J$ then $U_I\subseteq U_J$.
\item We have $U_I\cap U_J=U_{I\cap J}$ and $U_I\cup U_J\subseteq U_{I+J}$.
\end{enumerate}	
\end{lem}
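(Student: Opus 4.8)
The plan is to establish (1) by an explicit computation in the convolution algebra $C_c(G)$ that exploits the bisection structure of an étale groupoid, and then to read off (2) and (3) from the standard order-preserving correspondence between $\mathcal{I}(C_0(G^{(0)}))$ and the open subsets of $G^{(0)}$. For (1), let $g\in G$ with $d(g)\in U_I$; I must show $r(g)\in U_I$. By definition there is $f\in I\cap C_0(G^{(0)})$ with $f(d(g))\neq 0$, and after multiplying $f$ on both sides by a function in $C_c(G^{(0)})$ equal to $1$ near $d(g)$ we may assume $f\in I\cap C_c(G^{(0)})$. Since $G$ is étale, choose an open bisection $S$ with $g\in S$ and a function $\varphi\in C_c(G)$ with $\supp\varphi\subseteq S$ and $\varphi(g)=1$, and put $\psi:=\varphi\ast f\ast\varphi^*$; as $I$ is an ideal, $\psi\in I$. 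The crucial point is that $\psi\in C_c(G^{(0)})$: from $(\varphi\ast f)(k)=\varphi(k)f(d(k))$ one sees $\supp(\varphi\ast f)\subseteq\supp\varphi\subseteq S$, so $\psi$ is supported in $S\cdot S^{-1}$, and for a bisection this set equals $r(S)\subseteq G^{(0)}$ (if $kl^{-1}$ is composable with $k,l\in S$ then $d(k)=d(l)$, hence $k=l$ since $d|_S$ is injective, so $kl^{-1}=r(k)$); as $G^{(0)}$ is closed, $\supp\psi\subseteq G^{(0)}$, whence $\psi\in I\cap C_0(G^{(0)})$. A direct computation with the convolution formula gives, for $u\in G^{(0)}$,
$$\psi(u)=\sum_{h\in G^{u}}\abs{\varphi(h)}^2 f(d(h)),$$
and since $\supp\varphi\subseteq S$ and $r|_S$ is injective this sum has at most the one term coming from the unique $h\in S$ with $r(h)=u$. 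At $u=r(g)$ that element is $g$ itself, so $\psi(r(g))=\abs{\varphi(g)}^2 f(d(g))=f(d(g))\neq 0$, and therefore $r(g)\in U_I$.

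For (2) and (3), identify $\mathcal{I}(C_0(G^{(0)}))$ with the open subsets of $G^{(0)}$, so that the ideal $I\cap C_0(G^{(0)})$ corresponds exactly to $U_I$. If $I\subseteq J$ then $I\cap C_0(G^{(0)})\subseteq J\cap C_0(G^{(0)})$, and passing to the associated open sets gives $U_I\subseteq U_J$, which is (2). For (3), note that $(I\cap J)\cap C_0(G^{(0)})=(I\cap C_0(G^{(0)}))\cap(J\cap C_0(G^{(0)}))$; since intersection of ideals of $C_0(G^{(0)})$ corresponds to intersection of the associated open sets, this yields $U_{I\cap J}=U_I\cap U_J$. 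Similarly $(I\cap C_0(G^{(0)}))+(J\cap C_0(G^{(0)}))\subseteq(I+J)\cap C_0(G^{(0)})$, and since the sum of ideals of $C_0(G^{(0)})$ corresponds to the union of the associated open sets, this inclusion yields $U_I\cup U_J\subseteq U_{I+J}$.

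The main obstacle is (1), and within it the verification that the sandwich $\varphi\ast f\ast\varphi^*$ returns to the commutative subalgebra $C_0(G^{(0)})$ — this is precisely where étaleness (the existence of an open bisection through $g$) enters — together with the bookkeeping showing that its value at $r(g)$ is exactly the nonzero scalar $f(d(g))$. Once (1) is available, (2) and (3) are purely formal facts about ideals of the commutative algebra $C_0(G^{(0)})$; in particular one should only expect an inclusion, not an equality, for $U_{I+J}$, since $(I+J)\cap C_0(G^{(0)})$ may be strictly larger than $(I\cap C_0(G^{(0)}))+(J\cap C_0(G^{(0)}))$.
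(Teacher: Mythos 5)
Your proposal is correct and follows essentially the same route as the paper: part (1) is proved by sandwiching $f$ as $\varphi\ast f\ast\varphi^*$ with $\varphi$ supported on an open bisection through $g$ and evaluating at $r(g)$ (your extra verification that the sandwich lands in $C_0(G^{(0)})$ and the explicit formula $\psi(u)=\sum_{h\in G^u}\abs{\varphi(h)}^2f(d(h))$ just make explicit what the paper leaves to the reader), and (2) is formal. The only cosmetic difference is in the equality $U_I\cap U_J=U_{I\cap J}$ of (3), where the paper exhibits the element $f_1\ast f_2\in I\cap J\cap C_0(G^{(0)})$ not vanishing at $u$, while you instead use the set identity $(I\cap J)\cap C_0(G^{(0)})=(I\cap C_0(G^{(0)}))\cap(J\cap C_0(G^{(0)}))$ together with the Gelfand correspondence for ideals of the commutative algebra; both are routine and valid.
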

\begin{proof}
	To show $(1)$ let $g\in G$ with $d(g)\in U_I$. Then we can find $f\in I\cap C_0(G^{(0)})$ such that $f(d(g))\neq 0$. Choose an open bisection $V$ of $G$ with $g\in V$ and find a compactly supported function $\phi\in C_c(V)\subseteq C_r^*(G)$ such that $\phi(g)=1$. Then $\phi\ast f\ast \phi^*\in I\cap C_0(G^{(0)})$ and since $\phi$ is supported in a bisection and $\phi(g)=1$ we have $$\phi\ast f\ast \phi^*(r(g))=f(d(g))\neq 0.$$
	We conclude that $r(g)\in U_I$. Since $(2)$ is immediate, it remains to prove $(3)$. Note that by $(2)$ we have $U_{I\cap J}\subseteq U_I\cap U_J$. If conversely $u\in U_I\cap U_J$, then there exist functions $f_1\in I\cap C_0(G^{(0)})$ and $f_2\in J\cap C_0(G^{(0)})$ such that $f_1(u)\neq 0\neq f_2(u)$. Then $f_1\ast f_2\in I\cap J\cap C_0(G^{(0)})$ does the job. 
	The last assertion follows immediately from $(2)$.
\end{proof}

From Lemma~\ref{Lemma:IdealsGiveInvariantSubsets} we obtain a well-defined continuous map 
$$\Theta:\mathcal{I}(C_r^*(G))\rightarrow \mathcal{O}(G^{(0)})$$
given by $\Theta(I):=U_I$.
Conversely there is an obvious map $$\Xi:\mathcal{O}(G^{(0)})\rightarrow\mathcal{I}(C_r^*(G))$$ sending an open invariant subset to the ideal $Ideal_{C_r^*(G)}[C_0(U)]$ of $C_r^*(G)$ generated by $C_0(U)$. Since $\Xi$ respects inclusions, it is continuous as well.
\begin{lem}\label{Lem:Surjectivity}
	Let $U\subseteq G^{(0)}$ be an open invariant subset. Then we have
	$C_0(U)=C_0(G^{(0)})\cap Ideal_{C_r^*(G)}[C_0(U)]$, i.e. $\Theta\circ \Xi=id_{\mathcal{O}(G^{(0)})}$.
\end{lem}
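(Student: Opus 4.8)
The plan is to establish the two inclusions $C_0(U) \subseteq C_0(G^{(0)}) \cap \mr{Ideal}_{C_r^*(G)}[C_0(U)]$ and $C_0(G^{(0)}) \cap \mr{Ideal}_{C_r^*(G)}[C_0(U)] \subseteq C_0(U)$ separately. The first inclusion is immediate: $C_0(U)$ sits inside both $C_0(G^{(0)})$ and the ideal it generates. The second inclusion is the substantive one, and it is where the conditional expectation $E:C_r^*(G) \to C_0(G^{(0)})$ constructed in the Preliminaries does the essential work. The key observation I would use is that $E$ restricts to the identity on $C_0(G^{(0)})$, so that once we know $E$ maps $\mr{Ideal}_{C_r^*(G)}[C_0(U)]$ into $C_0(U)$, any element of $C_0(G^{(0)})$ lying in that ideal is automatically fixed by $E$ and hence lies in $C_0(U)$.

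So the heart of the argument reduces to showing $E\big(\mr{Ideal}_{C_r^*(G)}[C_0(U)]\big) \subseteq C_0(U)$. Since $E$ is continuous and $C_0(U)$ is closed, it suffices to check this on a dense subalgebra: the linear span of products $a \ast f \ast b$ with $f \in C_c(U)$ and $a,b \in C_c(G)$ (using that $\mr{Ideal}_{C_r^*(G)}[C_0(U)]$ is the closed span of such elements, and $C_c(U)$ is dense in $C_0(U)$). By further approximation we may take $a$ and $b$ to be supported in open bisections. For such elements, the formula $E(x)(u) = \langle \pi_u(x)\delta_u, \delta_u\rangle$ from (\ref{FormulaConditionalExpectation}), together with the explicit description of $\pi_u$ on functions supported in bisections (equation (\ref{FormulaRegularRep})), gives a concrete computation: if $a$ is supported in a bisection $S$ and $b$ in a bisection $T$, then $E(a \ast f \ast b)(u)$ is nonzero only if $u$ is the range of the unique element $g \in T$ with $d(g) \in U$ (when such $g$ exists and certain compatibility holds), and in that case the relevant point at which $f$ is evaluated lies in $U$; invariance of $U$ then forces $u \in U$ as well. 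Thus $E(a \ast f \ast b)$ is supported in $U$, i.e. lies in $C_0(U)$.

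The main obstacle — though it is more bookkeeping than conceptual — is organizing the bisection computation cleanly: one must track how the convolution $a \ast f \ast b$ behaves on units, identify precisely which $u \in G^{(0)}$ contribute to $E(a\ast f \ast b)(u)$, and verify that for every such $u$ the ``witnessing'' point in the support of $f$ lies in $U$, so that invariance of $U$ under the range/domain maps delivers $u \in U$. Once that support statement is in hand, continuity of $E$ upgrades it from $C_c$-generators to the whole ideal, and combining $E|_{C_0(G^{(0)})} = \mr{id}$ with the two inclusions yields $C_0(U) = C_0(G^{(0)}) \cap \mr{Ideal}_{C_r^*(G)}[C_0(U)]$. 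Restated in terms of the maps $\Theta$ and $\Xi$: for any open invariant $U$ we have computed $U_{\Xi(U)} = U$, which is exactly $\Theta \circ \Xi = \mr{id}_{\mc O(G^{(0)})}$.
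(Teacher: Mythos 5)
Your proposal is correct and follows essentially the same route as the paper: the heart of both arguments is expanding a generator $\phi\ast\tilde f\ast\psi$ (with $\tilde f\in C_c(U)$) at a unit $u$, locating a point of $\supp(\tilde f)\subseteq U$ equal to $d(h)$ for some $h\in G^u$, and invoking invariance of $U$ to get $u\in U$. The only (harmless) difference is packaging: you pass to the limit via continuity of $E$ and closedness of $C_0(U)$ and then use $E|_{C_0(G^{(0)})}=\id$, whereas the paper fixes a unit $x$ with $f(x)\neq 0$ and evaluates the approximants directly at $x$; your extra reduction to $a,b$ supported in bisections is not needed, since the computation works for arbitrary $a,b\in C_c(G)$.
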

\begin{proof}
It is clear that $C_0(U)\subseteq C_0(G^{(0)})\cap Ideal_{C_r^*(G)}[C_0(U)]$. For the converse pick $f\in C_0(G^{(0)})\cap Ideal_{C_r^*(G)}[C_0(U)]$. It is enough to show that if $x\in G^{(0)}$ with $f(x)\neq 0$, then $x\in U$. Since $f$ is contained in the ideal generated by $C_0(U)$ we can approximate it by finite sums of functions of the form $\phi\ast \tilde{f}\ast\psi$ where $\phi,\psi\in C_c(G)$ and $\tilde{f}\in C_c(U)$. In particular there must exist such functions with the property that $\varphi\ast\tilde{f}\ast\psi(x)\neq 0$.
Hence there are elements $g,h\in G^x$ such that
$$0\neq \varphi\ast\tilde{f}\ast\psi(x)=\varphi(h)\tilde{f}(h^{-1}g)\psi(g^{-1})$$
It follows that $h^{-1}g$ must be an element of $U$ since $supp(\tilde{f})$ is contained in $U$. But then $h\in G$ with $d(h)=d(g^{-1}h)=g^{-1}h\in U$ and by invariance of $U$ we conclude that $x=r(h)\in U$ as desired.
\end{proof}

It is also worthwhile to note the following lemma, which we will use freely.
\begin{lem}\label{Lemma:RestrictedGroupoidIsIdeal}
For an open invariant subset $U\subseteq G^{(0)}$ we can identify the ideal of $C_r^*(G)$ generated by $C_0(U)$ with the reduced groupoid $C^*$-algebra $C_r^*(G_U)$.
\end{lem}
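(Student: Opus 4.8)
The claim is that $\Xi(U) = Ideal_{C_r^*(G)}[C_0(U)]$ is canonically isomorphic to $C_r^*(G_U)$, where $G_U$ is the open subgroupoid of $G$ with unit space $U$. The plan is to build a $*$-isomorphism at the level of compactly supported functions and then show it extends isometrically to the completions.

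First I would observe that since $U$ is open in $G^{(0)}$ and $G$ is \'etale, $G_U$ is an open subset of $G$ which is itself an \'etale groupoid, and restriction of functions gives a natural identification: a function in $C_c(G_U)$, extended by zero outside $G_U$, lands in $C_c(G)$, and conversely $C_c(G_U) = \{f \in C_c(G) : \supp(f) \subseteq G_U\}$ up to this identification. One checks directly from the convolution and involution formulas that this extension-by-zero map $j:C_c(G_U)\to C_c(G)$ is a $*$-homomorphism (the sums defining $f_1 * f_2$ only see elements of $G_U$ when both arguments are supported there, using that $U$ is invariant so that $G_U = G^U$ is a subgroupoid). Moreover the image $j(C_c(G_U))$ is a two-sided $*$-ideal in $C_c(G)$: if $\phi \in C_c(G)$ and $f \in C_c(G_U)$, then $\supp(\phi * f) \subseteq G \cdot G_U \subseteq G_U$ by invariance of $U$ (if $d(g) \in U$ and $g = g_1 g_2$ with $d(g_2)\in U$ forces... more carefully, $\phi*f(g) \ne 0$ needs some $h$ with $h^{-1}g \in G_U$, so $d(g) = d(h^{-1}g) \in U$, hence $g \in G_U$). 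It follows that $j$ extends $C_0(U) = C_0(G_U^{(0)})$ into $C_r^*(G)$ and its closed image contains, hence equals, the ideal generated by $C_0(U)$.

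The heart of the matter is to show that $j$ is isometric for the reduced norms, i.e. $\|f\|_{C_r^*(G_U)} = \|j(f)\|_{C_r^*(G)}$ for $f \in C_c(G_U)$. Here I would compare the regular representations: for $u \in U$, the space $\ell^2((G_U)_u)$ is exactly $\ell^2(G_u)$ since $(G_U)_u = \{g : d(g) = u\} = G_u$ (as $d(g)=u\in U$ already forces $g \in G_U$, again by invariance — $r(g)\in U$ too), and under this identification $\pi_u^{G_U}(f) = \pi_u^{G}(j(f))$. For $u \in G^{(0)} \setminus U$ one checks $\pi_u^G(j(f)) = 0$: indeed for $g \in G_u$, formula~(\ref{FormulaRegularRep}) gives $\pi_u^G(j(f))\delta_u = \sum_{g \in G_u} j(f)(g)\delta_g$, and $j(f)(g) = 0$ unless $g \in G_U$, which would force $u = d(g) \in U$, a contradiction; a similar argument using the ideal property handles $\pi_u^G(j(f))\delta_g$ for general $g \in G_u$. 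Therefore
\[
\|j(f)\|_{C_r^*(G)} = \sup_{u \in G^{(0)}} \|\pi_u^G(j(f))\| = \sup_{u \in U} \|\pi_u^{G_U}(f)\| = \|f\|_{C_r^*(G_U)}.
\]
Thus $j$ extends to an isometric $*$-homomorphism $C_r^*(G_U) \hookrightarrow C_r^*(G)$ with closed image equal to $\Xi(U)$, which is the desired identification.

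The main obstacle I anticipate is the bookkeeping in the norm comparison: one must be careful that for $u \in G^{(0)}\setminus U$ the operator $\pi_u^G(j(f))$ genuinely vanishes on all of $\ell^2(G_u)$ and not merely on $\delta_u$, since a priori $\pi_u^G(j(f))\delta_g$ for $g \in G_u$ involves values $j(f)(hg^{-1})$ which one needs to see are zero — this again comes down to the invariance of $U$ forcing any element composable appropriately with $g \in G_u$ (with $u \notin U$) to lie outside $G_U$. Once this vanishing is established cleanly, everything else is the routine verification that extension-by-zero respects convolution and adjoints, which I would not spell out in full.
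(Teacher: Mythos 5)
Your construction of the embedding $j:C_r^*(G_U)\hookrightarrow C_r^*(G)$ is sound, and in this respect your argument is more self-contained than the paper's, which simply asserts that $C_r^*(G_U)$ may be identified with the closure of $C_c(G_U)$ in $C_r^*(G)$; your comparison of regular representations (invariance of $U$ gives $(G_U)_u=G_u$ for $u\in U$, and $\pi_u(j(f))=0$ for $u\notin U$) is the right way to verify that identification. The verification that $\overline{j(C_c(G_U))}$ is a closed two-sided ideal of $C_r^*(G)$ containing $C_0(U)$ is also correct, and it yields the inclusion $Ideal_{C_r^*(G)}[C_0(U)]\subseteq C_r^*(G_U)$.

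However, the step ``its closed image contains, hence equals, the ideal generated by $C_0(U)$'' is a genuine gap. Knowing that a closed ideal contains $C_0(U)$ only shows that it contains the ideal \emph{generated by} $C_0(U)$; nothing you have proved rules out that $Ideal_{C_r^*(G)}[C_0(U)]$ is strictly smaller than $\overline{j(C_c(G_U))}$. You still owe the reverse inclusion $C_r^*(G_U)\subseteq Ideal_{C_r^*(G)}[C_0(U)]$, which is precisely where the paper does its work: it uses that $C_0(U)$ contains an approximate unit $(f_\lambda)_\lambda$ for $C_r^*(G_U)$ (citing \cite[Lemma~2.1~(5)]{BCS15}), so that every positive $x\in C_r^*(G_U)$ equals $\lim_\lambda x^{\frac{1}{2}}f_\lambda x^{\frac{1}{2}}$ and hence lies in the ideal generated by $C_0(U)$. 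Alternatively, you can argue directly on $C_c(G_U)$: if $f\in C_c(G_U)$ has compact support $K$, choose $h\in C_c(U)$ with $h\equiv 1$ on the compact set $d(K)\subseteq U$; then $f\ast h(g)=f(g)h(d(g))=f(g)$, so $f=f\ast h\in Ideal_{C_r^*(G)}[C_0(U)]$, and the inclusion follows by density. Either patch closes the gap.
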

\begin{proof}
	As $U$ is open in $G$, we can identify $C_r^*(G_U)$ with the closure of $C_c(G_U)$ in $C_r^*(G)$.
	Let $f\in C_c(G)$ and $g\in C_c(G_U)$. Then $supp(f\ast g)\subseteq supp(f)supp(g)\subseteq G_U$. Since $C_r^*(G_U)$ is closed in $C_r^*(G)$ and convolution is continuous, we obtain that $C_r^*(G_U)$ is an ideal. Consequently we have $Ideal_{C_r^*(G)}[C_0(U)]\subseteq C_r^*(G_U)$. If conversely $x\in C_r^*(G_U)$ is a non-zero positive element then we can use the fact that $C_0(U)$ contains an approximate unit $(f_\lambda)_\lambda$ for $C_r^*(G_U)$ (see e.g. \cite[Lemma~2.1 (5)]{BCS15}) to write $x=\lim_\lambda x^{\frac{1}{2}}f_\lambda x^{\frac{1}{2}}$.
	But $x^{\frac{1}{2}}f_\lambda x^{\frac{1}{2}}$ is obviously in the ideal of $C_r^*(G)$ generated by $C_0(U)$.
\end{proof}

In the following we want to characterize when the ideal lattice of $C_r^*(G)$ is completely described by the open invariant subsets of $G^{(0)}$.
We say that $C_0(G^{(0)})$ \textit{separates the ideals of} $C_r^*(G)$ if the canonical map $\Theta:\mathcal{I}(C_r^*(G))\rightarrow \mathcal{O}(G^{(0)})$ is injective. Note that by Lemma \ref{Lem:Surjectivity} it is always surjective. The following elementary lemma is very helpful, as it allows us to phrase the question whether $C_0(G^{0})$ separates the ideals in terms of the canonical conditional expectation $E:C_r^*(G)\rightarrow C_0(G^{(0)})$:

\begin{lem}\label{Lemma:CharacterizationsOfIdealSeparation}
	Let $G$ be an étale groupoid and $E:C_r^*(G)\rightarrow C_0(G^{(0)})$ be the canonical conditional expectation. Then the following are equivalent:
	\begin{enumerate}
		\item $I=Ideal_{C_r^*(G)}[I\cap C_0(G^{(0)})]$ for every ideal $I\subseteq C_r^*(G)$.
		\item $C_0(G^{(0)})$ separates the ideals of $C_r^*(G)$.
		\item The map $\Xi:\mathcal{O}(G^{(0)})\rightarrow \mathcal{I}(C_r^*(G))$
		is surjective.
		\item $I=Ideal_{C_r^*(G)}[E(I)]$ for every ideal $I\subseteq C_r^*(G)$.
	\end{enumerate}
\end{lem}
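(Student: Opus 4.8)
The plan is to prove the cycle of implications $(1)\Rightarrow(2)\Rightarrow(3)\Rightarrow(4)\Rightarrow(1)$, with the bulk of the work in the steps involving the conditional expectation $E$. The easy part is the equivalence of $(1)$, $(2)$, and $(3)$. For $(2)\Rightarrow(3)$, recall from Lemma~\ref{Lem:Surjectivity} that $\Theta\circ\Xi=\mr{id}$, so if $\Theta$ is injective then $\Xi\circ\Theta=\mr{id}$ as well, whence $\Xi$ is surjective. For $(3)\Rightarrow(1)$, given an ideal $I$ with $U_I=U$, surjectivity of $\Xi$ produces some open invariant $V$ with $I=Ideal_{C_r^*(G)}[C_0(V)]$; applying $\Theta$ and Lemma~\ref{Lem:Surjectivity} gives $V=U_I=U$, so $I=Ideal_{C_r^*(G)}[C_0(U)]=Ideal_{C_r^*(G)}[I\cap C_0(G^{(0)})]$, since $C_0(U)=I\cap C_0(G^{(0)})$ by definition of $U_I$. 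For $(1)\Rightarrow(2)$, if $U_I=U_J$ then $I\cap C_0(G^{(0)})=J\cap C_0(G^{(0)})$ (both equal $C_0(U_I)$), and then $(1)$ forces $I=Ideal_{C_r^*(G)}[I\cap C_0(G^{(0)})]=Ideal_{C_r^*(G)}[J\cap C_0(G^{(0)})]=J$.

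It remains to connect condition $(4)$ to the others. The implication $(4)\Rightarrow(1)$ is immediate once one observes that $E(I)=\bar{E(I)}=I\cap C_0(G^{(0)})$ for any ideal $I$: indeed $E$ restricts to the identity on $C_0(G^{(0)})$, so $I\cap C_0(G^{(0)})\subseteq E(I)$; conversely, since $I$ has an approximate unit and $E$ is contractive and positive, for $a\in I$ one approximates $E(a)$ by elements $E(f_\lambda^{1/2} a f_\lambda^{1/2})$, which lie in $I$ because $I$ is an ideal. So $E(I)\subseteq \bar I\cap C_0(G^{(0)})=I\cap C_0(G^{(0)})$, giving equality, and then $(4)$ becomes exactly $(1)$.

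The one genuinely substantial step is $(1)\Rightarrow(4)$ (equivalently $(2)\Rightarrow(4)$ or $(3)\Rightarrow(4)$), or rather checking that $(4)$ is not vacuous — here one must show $Ideal_{C_r^*(G)}[E(I)]\supseteq I$, the reverse inclusion being clear since $E(I)\subseteq I$ and so the generated ideal sits inside $I$. Using $E(I)=I\cap C_0(G^{(0)})$ from the previous paragraph, this is precisely condition $(1)$, so the equivalence $(1)\Leftrightarrow(4)$ is formal once that identity is in hand. \emph{The only real content is therefore the lemma-within-the-lemma that $\bar{E(I)}=I\cap C_0(G^{(0)})$ for every closed two-sided ideal $I$}, and the crux there is that $E(f_\lambda^{1/2} a f_\lambda^{1/2})\to E(a)$, which follows from joint continuity of multiplication, continuity of $E$, and the fact that $C_0(U_I)$ (or more simply any approximate unit of $I$) acts as an approximate unit for $I$ — a point already recorded in the proof of Lemma~\ref{Lemma:RestrictedGroupoidIsIdeal}. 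I expect no serious obstacle beyond bookkeeping; the whole proof is a diagram chase glued together by this one functional-analytic observation about conditional expectations and approximate units.
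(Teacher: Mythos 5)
Your handling of the equivalence of (1), (2), (3) is fine and essentially the paper's argument. However, the treatment of (4) rests on a claim that is false in general: you assert that $\overline{E(I)}=I\cap C_0(G^{(0)})$ for \emph{every} closed two-sided ideal $I$, and both of your implications involving (4) lean on this. The inclusion $E(I)\subseteq I$ is precisely what fails in general, and indeed it is the whole point of this section of the paper that it can fail. Concretely, take $G=\Gamma$ a non-amenable discrete group viewed as a groupoid over a single unit: then $C_0(G^{(0)})=\C$, $E$ is the canonical faithful trace $\tau$, and any proper non-zero ideal $I\subseteq C_r^*(\Gamma)$ satisfies $I\cap\C=\{0\}$ while $E(I)=\tau(I)\neq\{0\}$ by faithfulness of $\tau$. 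The specific error in your approximate-unit argument is the phrase ``which lie in $I$ because $I$ is an ideal'': the elements $f_\lambda^{1/2}af_\lambda^{1/2}$ lie in $I$, but $E(f_\lambda^{1/2}af_\lambda^{1/2})$ need not, since $E$ does not map $I$ into $I$. (If you insist on $f_\lambda\in C_0(G^{(0)})$ so that $E(f_\lambda^{1/2}af_\lambda^{1/2})=f_\lambda^{1/2}E(a)f_\lambda^{1/2}\in I$, you would need $I\cap C_0(G^{(0)})$ to contain an approximate unit for $I$, which is essentially the statement being characterized.)

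The implications themselves are still true; they just need the logic run in the other direction. For $(4)\Rightarrow(1)$ the paper derives $E(I)=I\cap C_0(G^{(0)})$ \emph{as a consequence of} (4), via the chain $I\cap C_0(G^{(0)})\subseteq E(I)\subseteq Ideal_{C_r^*(G)}[E(I)]\cap C_0(G^{(0)})=I\cap C_0(G^{(0)})$, the last equality being exactly hypothesis (4). For the entry into (4), the paper proves $(3)\Rightarrow(4)$: under (3) every ideal has the form $C_r^*(G_U)=Ideal_{C_r^*(G)}[C_0(U)]$, and for ideals of this special form one does have $E(C_r^*(G_U))=C_0(U)$ (since $E$ restricted to $C_r^*(G_U)$ is the conditional expectation onto $C_0(U)$), so (4) holds for them. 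Your identity is thus valid only for ideals of the form $C_r^*(G_U)$, or, a posteriori, for all ideals once one of the equivalent conditions is assumed --- not unconditionally.
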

\begin{proof}
	$(1)\Rightarrow(2)$: Suppose $I_1,I_2$ are ideals in $C_r^*(G)$ such that $I_1\cap C_0(G^{(0)})=I_2\cap C_0(G^{(0)})$. Then by $(1)$ we have
	$$I_1=Ideal_{C_r^*(G)}[I_1\cap C_0(G^{(0)})]=Ideal_{C_r^*(G)}[I_2\cap C_0(G^{(0)})]=I_2$$
	$(2)\Rightarrow(3)$: Since we have $\Theta\circ \Xi=\id_{\mathcal{O}(G^{(0)})}$ by Lemma \ref{Lem:Surjectivity}, injectivity of $\Theta$ implies surjectivity of $\Xi$.
	
	%Suppose $I$ is any ideal in $C_r^*(G)$. Then $I\cap C_0(G^{(0)})$ corresponds to an open $G$-invariant subset $U\subseteq G^{(0)}$. By Lemma~\ref{Lemma:RestrictedGroupoidIsIdeal} we have that $C_r^*(G_U)$ is an ideal and $C_r^*(G_U)\cap C_0(G^{(0)})=C_0(U)=I\cap C_0(G^{(0)})$. In particular in the language of Lemma \ref{Lemma:IdealsGiveInvariantSubsets} we have $U_{C_r^*(G_U)}=U=U_I$ and hence by $(2)$ we can conclude $I=C_r^*(G_U)$.\\
	$(3)\Rightarrow (4)$: Property $(4)$ clearly holds for ideals of the form $C_r^*(G_U)$, where $U\subseteq G^{(0)}$ is open and $G$-invariant, since
	$$C_r^*(G_U)=Ideal_{C_r^*(G)}[C_0(U)]=Ideal_{C_r^*(G)}[E(C_r^*(G_U))].$$
	But $(3)$ says that every ideal of $C_r^*(G)$ is of this form.
	
	$(4)\Rightarrow(1)$: Let $I\subseteq C_r^*(G)$ be an ideal. Then $$I\cap C_0(G^{(0)})\subseteq E(I)\subseteq Ideal_{C_r^*(G)}[E(I)]\cap C_0(G^{(0)})=I\cap C_0(G^{(0)}).$$
	Thus $E(I)=I\cap C_0(G^{(0)})$ and using $(4)$ again we get $$I=Ideal_{C_r^*(G)}[E(I)]=Ideal_{C_r^*(G)}[I\cap  C_0(G^{(0)})].$$
\end{proof}
We shall need the following definition from \cite{AD16}:
\begin{defi}
	Let $G$ be a groupoid. $G$ is called \textit{inner exact} if the sequence
	$$0\rightarrow C_r^*(G_{G^{(0)}\setminus D})\rightarrow C_r^*(G)\rightarrow C_r^*(G_D)\rightarrow 0$$
	 is exact for every closed $G$-invariant subset $D\subseteq G^{(0)}$.
\end{defi}

\begin{lem}\label{Lemma:IdealInclusionAndInnerExactness}
	Let $G$ be an étale groupoid and $E:C_r^*(G)\rightarrow C_0(G^{(0)})$ be the canonical conditional expectation. Then the following are equivalent:
	\begin{enumerate}
		\item $G$ is inner exact.
		\item $x\in Ideal_{C_r^*(G)}[E(x)]$ for all $x\in C_r^*(G)_+$.
		\item $I\subseteq Ideal_{C_r^*(G)}[E(I)]$ for all ideals $I\subseteq C_r^*(G)$.
	
	\end{enumerate}
	\end{lem}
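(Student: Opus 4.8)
The plan is to prove the three-way equivalence by establishing the cycle $(1)\Rightarrow(2)\Rightarrow(3)\Rightarrow(1)$, with the conditional expectation $E$ as the main bridge between the dynamical condition (inner exactness) and the algebraic condition (an element lies in the ideal generated by its expectation).

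\textbf{Proof sketch.}
For $(1)\Rightarrow(2)$, fix $x\in C_r^*(G)_+$. Let $U=U_J$ where $J=Ideal_{C_r^*(G)}[E(x)]$; by Lemma~\ref{Lemma:IdealsGiveInvariantSubsets}\,(1) this is an open invariant subset of $G^{(0)}$, and by Lemma~\ref{Lemma:RestrictedGroupoidIsIdeal} we may identify $J$ with $C_r^*(G_U)$. Set $D=G^{(0)}\setminus U$, a closed invariant subset. Inner exactness gives the short exact sequence $0\to C_r^*(G_U)\to C_r^*(G)\to C_r^*(G_D)\to 0$; let $q\colon C_r^*(G)\to C_r^*(G_D)$ be the quotient map. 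To show $x\in C_r^*(G_U)=J$ it suffices to show $q(x)=0$. Here I would use that the conditional expectation is natural with respect to restriction to closed invariant subsets: the restriction map $C_0(G^{(0)})\to C_0(D)$ intertwines $E$ and the conditional expectation $E_D\colon C_r^*(G_D)\to C_0(D)$, i.e. $E_D\circ q = (\text{restriction})\circ E$ on $C_c(G)$ and hence everywhere by continuity. Since $E(x)\in E(x)\cdot C_0(G^{(0)})$ vanishes off $U$, its restriction to $D$ is $0$, so $E_D(q(x))=0$; as $q(x)\geq 0$ and $E_D$ is faithful (by the faithfulness of the canonical conditional expectation established in the Preliminaries, applied to $G_D$), we conclude $q(x)=0$, as wanted.

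For $(2)\Rightarrow(3)$, let $I$ be an ideal and $x\in I_+$. Every element of $I$ is a linear combination of such positive elements, so it is enough to show $x\in Ideal_{C_r^*(G)}[E(I)]$. But $E(x)\in E(I)$, so by $(2)$ we have $x\in Ideal_{C_r^*(G)}[E(x)]\subseteq Ideal_{C_r^*(G)}[E(I)]$.

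$(3)\Rightarrow(1)$: Let $D\subseteq G^{(0)}$ be closed invariant and $U=G^{(0)}\setminus D$; let $J=C_r^*(G_U)$, identified with $Ideal_{C_r^*(G)}[C_0(U)]$ via Lemma~\ref{Lemma:RestrictedGroupoidIsIdeal}. We must show the sequence $0\to C_r^*(G_U)\to C_r^*(G)\to C_r^*(G_D)\to 0$ is exact, and the only nontrivial point is that $\ker(q)\subseteq J$ where $q$ is the quotient onto $C_r^*(G_D)$ (exactness on the left and surjectivity being standard). Let $I=\ker q$. Using naturality of the conditional expectation as above, $E_D(q(x))=(\text{restriction to }D)(E(x))$, so $x\in I$ forces $E(x)$ to vanish on $D$, i.e. $E(x)\in C_0(U)$. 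Hence $E(I)\subseteq C_0(U)$, and by $(3)$ we get $I=Ideal_{C_r^*(G)}[E(I)]\subseteq Ideal_{C_r^*(G)}[C_0(U)]=C_r^*(G_U)=J$, completing the cycle.

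\textbf{Main obstacle.} The crux is the naturality statement $E_D\circ q=\rho\circ E$, where $\rho\colon C_0(G^{(0)})\to C_0(D)$ is restriction. On $C_c(G)$ this is immediate because $q$ is implemented by restricting functions to $G_D$ and both expectations are restriction of functions to the respective unit spaces; the point is to check that $q$ genuinely restricts to the map $f\mapsto f|_{G_D}$ on $C_c(G)$ (this is where one invokes that $C_r^*(G_D)$ is the quotient, using the standard identification of the quotient $C^*$-algebra — see e.g. the discussion around inner exactness in \cite{AD16}) and then extend by density and continuity. The faithfulness of $E_D$ on $C_r^*(G_D)$ is exactly the faithfulness of the canonical conditional expectation proved in the Preliminaries, applied verbatim to the étale groupoid $G_D$. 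Everything else is bookkeeping with Lemmas~\ref{Lemma:IdealsGiveInvariantSubsets}--\ref{Lemma:RestrictedGroupoidIsIdeal}.
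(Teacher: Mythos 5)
Your proposal is correct and follows essentially the same route as the paper: both directions $(1)\Rightarrow(2)$ and $(3)\Rightarrow(1)$ hinge on the commutativity $E_D\circ q=\mathrm{res}_D\circ E$ together with faithfulness of $E_D$, with $(2)\Rightarrow(3)$ being the same one-line observation; your choice of $U=U_J$ for $J=Ideal_{C_r^*(G)}[E(x)]$ coincides with the paper's saturation of the support of $E(x)$. (Only a cosmetic slip: in $(3)\Rightarrow(1)$ condition $(3)$ gives $I\subseteq Ideal_{C_r^*(G)}[E(I)]$, not equality, but your containment chain only uses the inclusion.)
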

	\begin{proof}
	$(1)\Rightarrow (2)$: Let $x\in C_r^*(G)_+$. Let $I$ be the smallest $G$-invariant ideal of $C_0(G^{(0)})$ containing $E(x)$ and let $U$ be the corresponding $G$-invariant open subset with $I= C_0(U)$. Let $D=G^{(0)}\setminus U$. Then we obtain a commutative diagram with exact rows:\\
	\begin{center}
		\begin{tikzpicture}[description/.style={fill=white,inner sep=2pt}]
		\matrix (m) [matrix of math nodes, row sep=3em,
		column sep=2.5em, text height=1.5ex, text depth=0.25ex]
		{ 0 & C_r^*(G_{U})\ & C_r^*(G) & C_r^*(G_{D}) & 0 \\
			0 & C_0(U)\ & C_0(G^{(0)}) & C_0(D) & 0 \\
		};
		\path[->,font=\scriptsize]
		(m-1-1) edge node[auto] {$  $} (m-1-2)
		(m-1-2) edge node[auto] {$  $} (m-1-3)
		(m-1-3) edge node[auto] {$ \pi $} (m-1-4)
		(m-1-4) edge node[auto] {$  $} (m-1-5)
		(m-2-1) edge node[auto] {$  $} (m-2-2)
		(m-2-2) edge node[auto] {$  $} (m-2-3)
		(m-2-3) edge node[auto] {$ res_D $} (m-2-4)
		(m-2-4) edge node[auto] {$  $} (m-2-5)
		%(m-1-1) edge node[auto] {$  $} (m-2-1)
		(m-1-2) edge node[auto] {$ E_U  $} (m-2-2)
		(m-1-3) edge node[auto] {$ E  $} (m-2-3)
		(m-1-4) edge node[auto] {$ E_D  $} (m-2-4)
		;
		\end{tikzpicture}
	\end{center}
	Since $E(x)\in I\cong C_0(U)$ by definition, we have that $E_D(\pi(x))=res_D(E(x))=0$. As $E_D$ is faithful we obtain $\pi(x)=0$, which implies $x\in C_r^*(G_U)=Ideal_{C_r^*(G)}[C_0(U)]=Ideal_{C_r^*(G)}[E(x)]$.\\
	$(2)\Rightarrow (3)$: Let $I$ be an ideal of $C_r^*(G)$. Then $I_+\subseteq\bigcup\limits_{x\in I_+} Ideal_{C_r^*(G)}[E(x)]\subseteq Ideal_{C_r^*(G)}[E(I)]$.
	Thus, we have $I\subseteq Ideal_{C_r^*(G)}[E(I)]$.\\
	$(3)\Rightarrow(1)$: Let $D\subseteq G^{(0)}$ be a closed $G$-invariant subset and $U=G^{(0)}\setminus D$. We only need to show that $ker(\pi)\subseteq C_r^*(G_U)$. Let $I:=ker(\pi)$. Then $res_D(E(I))=E_D(\pi(I))=0$ and by exactness of the lower row of the diagram $E(I)\subseteq C_0(U)$. Applying $(3)$ we obtain $ker(\pi)=I\subseteq Ideal_{C_r^*(G)}[E(I)]\subseteq Ideal_{C_r^*(G)}[C_0(U)]=C_r^*(G_U)$.
	\end{proof}
	
	\begin{lem}\label{inner exact and intersection}
		Let $G$ be an inner exact, étale groupoid. If $I\subseteq C_r^*(G)$ is an ideal, and $D\subseteq G^{(0)}$ is the closed invariant subset corresponding to $I\cap C_0(G^{(0)})$, then $\pi(I)\cap C_0(D)=\lbrace 0\rbrace$, where $\pi:C_r^*(G)\rightarrow C_r^*(G_D)$ is the quotient map.
	\end{lem}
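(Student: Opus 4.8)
The plan is to reduce the whole statement to one structural observation: under the inner exactness hypothesis, $\ker\pi$ is actually \emph{contained in} $I$. First I would put $U:=G^{(0)}\setminus D$, so that by the choice of $D$ we have $C_0(U)=I\cap C_0(G^{(0)})$ as ideals of $C_0(G^{(0)})$. Inner exactness for the closed invariant set $D$ identifies $\ker\pi$ with $C_r^*(G_U)$, and by Lemma~\ref{Lemma:RestrictedGroupoidIsIdeal} the latter, viewed inside $C_r^*(G)$, is the ideal $Ideal_{C_r^*(G)}[C_0(U)]=Ideal_{C_r^*(G)}[I\cap C_0(G^{(0)})]$. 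Since $I\cap C_0(G^{(0)})\subseteq I$ and $I$ is an ideal, this forces $\ker\pi\subseteq I$, and hence $I+\ker\pi=I$. Consequently, for every $h\in C_r^*(G)$ one has $\pi(h)\in\pi(I)$ if and only if $h\in I$ (the nontrivial direction: if $\pi(h)=\pi(a)$ with $a\in I$, then $h-a\in\ker\pi\subseteq I$, so $h=a+(h-a)\in I$).

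Granting this, the lemma follows quickly. I would also use the standard fact that $\pi$ restricts on $C_0(G^{(0)})$ to the canonical restriction $*$-homomorphism onto $C_0(D)$, which is surjective because $D$ is closed in $G^{(0)}$ (this is nothing but the bottom-right part of the commuting diagram appearing in the proof of Lemma~\ref{Lemma:IdealInclusionAndInnerExactness}). Now take $f\in\pi(I)\cap C_0(D)$ and lift it to some $h\in C_0(G^{(0)})$ with $h|_D=f$. Then $\pi(h)=f\in\pi(I)$, so by the equivalence above $h\in I$, whence $h\in I\cap C_0(G^{(0)})=C_0(U)$. But every function in $C_0(U)$ vanishes on $D=G^{(0)}\setminus U$, so $f=h|_D=0$, and therefore $\pi(I)\cap C_0(D)=\{0\}$. (Equivalently, and a touch more cleanly: since $\ker\pi\subseteq I$, $\pi(I)\cap C_0(D)=\pi(I)\cap\pi\big(C_0(G^{(0)})\big)=\pi\big(I\cap C_0(G^{(0)})\big)=\pi\big(C_0(U)\big)=\{0\}$.)

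I do not anticipate a genuine obstacle: the result is a short consequence of inner exactness once the inclusion $\ker\pi\subseteq I$ is noticed. The only two points deserving a line of justification — neither of them difficult — are that $\ker\pi$ really equals $C_r^*(G_U)$ (precisely inner exactness for $D$, together with the identification of $C_r^*(G_U)$ with an ideal of $C_r^*(G)$ from Lemma~\ref{Lemma:RestrictedGroupoidIsIdeal}), and that $\pi$ restricts to the canonical surjection $C_0(G^{(0)})\to C_0(D)$ on the diagonal; the latter is the usual description of the quotient map.
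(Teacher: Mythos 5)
Your proof is correct, and it rests on the same crucial observation as the paper's: inner exactness identifies $\ker\pi$ with $C_r^*(G_U)=Ideal_{C_r^*(G)}[C_0(U)]=Ideal_{C_r^*(G)}[I\cap C_0(G^{(0)})]\subseteq I$, where $U=G^{(0)}\setminus D$. Where you diverge is in how you exploit this inclusion. The paper stays on the side of $C_r^*(G)$: it takes $x\in I$ with $\pi(x)\in C_0(D)$, uses the commuting diagram to see that $\pi(E(x)-x)=0$, deduces $E(x)-x\in\ker\pi\subseteq I$ and hence $E(x)\in I\cap C_0(G^{(0)})=C_0(U)$, and then reads off $\pi(x)=E_D(\pi(x))=res_D(E(x))=0$. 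You instead work in the quotient: you lift $f\in\pi(I)\cap C_0(D)$ to $h\in C_0(G^{(0)})$ via the Tietze surjection $res_D$, use the saturation $\pi^{-1}(\pi(I))=I+\ker\pi=I$ to place $h$ in $I\cap C_0(G^{(0)})=C_0(U)$, and conclude $f=h|_D=0$. Your route avoids the conditional expectation entirely, at the modest cost of invoking surjectivity of $C_0(G^{(0)})\to C_0(D)$; the paper's route avoids Tietze but leans on the faithful expectation machinery it has already set up. Both are short and valid, and your closing one-line reformulation $\pi(I)\cap C_0(D)=\pi\bigl(I\cap C_0(G^{(0)})\bigr)=\pi\bigl(C_0(U)\bigr)=\{0\}$ is a clean way to package the argument.
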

	\begin{proof}
		Let $x\in I$ such that $\pi(x)\in C_0(D)$. Then $\pi(E(x)-x)=res_D(E(x))-\pi(x)=E_D(\pi(x))-E_D(\pi(x))=0$. By inner exactness we have $$E(x)-x\in C_r^*(G_U)=Ideal_{C_r^*(G)}[C_0(U)]\subseteq I,$$ where $U=G^{(0)}\setminus D$. Since $x\in I$ as well, we conclude $E(x)\in I\cap C_0(G^{(0)})=C_0(U)$. But then $0=res_D(E(x))=E_D(\pi(x))=\pi(x)$.
	\end{proof}
	
	\begin{defi}
		Let $G$ be an étale groupoid. We say that $G$ has the \emph{intersection property}, if for every non-zero ideal $I\subseteq C_r^*(G)$ we have $I\cap C_0(G^{(0)})\neq \lbrace 0\rbrace$. If $G_D$ has the intersection property for every closed, $G$-invariant subset $D\subseteq G^{(0)}$, we say that $G$ has the \emph{residual intersection property}.
	\end{defi}
	
	\begin{prop}\label{Prop:IdealInclusionAndResidualIntersection}
		Let $G$ be an étale groupoid. Then the following are equivalent:
		\begin{enumerate}
			\item For every ideal $I\subseteq C_r^*(G)$ we have $Ideal_{C_r^*(G)}[E(I)]\subseteq I$.
			\item For every $x\in C_r^*(G)_+$ we have $E(x)\in Ideal_{C_r^*(G)}[x]$.
			\item $G$ has the residual intersection property and for every ideal $I\subseteq C_r^*(G)$ we have $\pi(I)\cap C_0(D)=\lbrace 0\rbrace$, where $\pi$ and $D$ are as in Lemma~\ref{inner exact and intersection}.
		\end{enumerate}
	\end{prop}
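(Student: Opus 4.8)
The plan is to prove the cycle $(1)\Rightarrow(3)\Rightarrow(2)\Rightarrow(1)$. The implication $(2)\Rightarrow(1)$ and its converse are purely formal: a closed two-sided ideal $I$ is the linear span of its positive elements, so under $(2)$ we have $E(x)\in Ideal_{C_r^*(G)}[x]\subseteq I$ for each $x\in I_+$, whence $E(I)\subseteq I$ and therefore $Ideal_{C_r^*(G)}[E(I)]\subseteq I$; conversely, applying $(1)$ to $I:=Ideal_{C_r^*(G)}[x]$ gives $E(x)\in E(I)\subseteq Ideal_{C_r^*(G)}[E(I)]\subseteq Ideal_{C_r^*(G)}[x]$. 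So all the content lies in the passage to and from $(3)$. The tools are already set up: the faithfulness of the canonical conditional expectation, valid for every \'etale groupoid and hence for each $C_r^*(G_D)$; the surjective $\ast$-homomorphism $\pi\colon C_r^*(G)\to C_r^*(G_D)$, which restricts on $C_0(G^{(0)})$ to $res_D$ and satisfies $E_D\circ\pi=res_D\circ E$ (immediate on $C_c(G)$, and recorded in the diagram in Lemma~\ref{Lemma:IdealInclusionAndInnerExactness}); and the identification, for the closed invariant set $D\subseteq G^{(0)}$ attached to an ideal $I$, of $I\cap C_0(G^{(0)})$ with $C_0(G^{(0)}\setminus D)$.

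For $(1)\Rightarrow(3)$, first observe that $(1)$ gives $E(I)\subseteq I$ for every ideal $I$. For the residual intersection property, fix a closed invariant $D\subseteq G^{(0)}$ and a non-zero ideal $\tilde K\subseteq C_r^*(G_D)$, and pull it back to $I:=\pi^{-1}(\tilde K)$, a closed ideal of $C_r^*(G)$. Since $E_D$ is faithful, $E_D(\tilde K)\neq\{0\}$, so by surjectivity of $\pi$ there is $x\in I$ with $E_D(\pi(x))\neq 0$; but $E_D(\pi(x))=res_D(E(x))=\pi(E(x))$ and $E(x)\in E(I)\subseteq I=\pi^{-1}(\tilde K)$, so $\pi(E(x))$ is a non-zero element of $\tilde K\cap C_0(D)$. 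Hence $G_D$ has the intersection property; as $D$ was arbitrary, $G$ has the residual intersection property. For the second clause of $(3)$, let $I$ be any ideal with attached closed invariant set $D$, so $I\cap C_0(G^{(0)})=C_0(G^{(0)}\setminus D)$, and suppose $f=\pi(x)\in\pi(I)\cap C_0(D)$ with $x\in I$. Then $f=E_D(f)=E_D(\pi(x))=\pi(E(x))$, while $E(x)\in E(I)\subseteq I\cap C_0(G^{(0)})=C_0(G^{(0)}\setminus D)$ vanishes on $D$; hence $f=res_D(E(x))=0$, i.e.\ $\pi(I)\cap C_0(D)=\{0\}$.

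For $(3)\Rightarrow(2)$, let $x\in C_r^*(G)_+$, put $J:=Ideal_{C_r^*(G)}[x]$, and let $D$ be the closed invariant set attached to $J$, with quotient $\pi\colon C_r^*(G)\to C_r^*(G_D)$. The image $\pi(J)$ of the closed ideal $J$ under the surjection $\pi$ is again a closed ideal of $C_r^*(G_D)$; by the second clause of $(3)$ it meets $C_0(D)$ trivially, and since $G_D$ has the intersection property (first clause of $(3)$) this forces $\pi(J)=\{0\}$, so in particular $\pi(x)=0$. Therefore $res_D(E(x))=E_D(\pi(x))=0$, i.e.\ $E(x)$ vanishes on $D$, so $E(x)\in C_0(G^{(0)}\setminus D)=J\cap C_0(G^{(0)})\subseteq J=Ideal_{C_r^*(G)}[x]$. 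This is $(2)$, and the cycle closes.

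The individual steps are short, and the care required is confined to standard structural facts: a closed ideal is the linear span of its positive part, a surjective $\ast$-homomorphism carries closed ideals to closed ideals, and $\pi$ intertwines $E$ with $E_D$ and with the canonical inclusions of the unit-space algebras (trivial on $C_c(G)$, then by continuity). The one genuinely conceptual point, and where I expect the main work to lie, is the mechanism behind $(1)\Rightarrow(3)$: condition $(1)$ is inherited by every quotient $C_r^*(G_D)$ through $\pi$, and it is this self-similarity --- together with faithfulness of the expectations --- that upgrades $(1)$ to the \emph{residual} intersection property rather than merely the intersection property for $G$ itself. The clause $\pi(I)\cap C_0(D)=\{0\}$ in $(3)$ is exactly the extra bookkeeping needed to run the argument backwards, guaranteeing in $(3)\Rightarrow(2)$ that the image of $Ideal_{C_r^*(G)}[x]$ in $C_r^*(G_D)$ has no nonzero intersection with $C_0(D)$ and hence, by residual intersection, vanishes.
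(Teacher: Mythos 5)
Your proposal is correct and follows essentially the same route as the paper: the cycle $(1)\Rightarrow(3)\Rightarrow(2)\Rightarrow(1)$, with $(1)\Rightarrow(3)$ obtained by pulling a non-zero ideal of $C_r^*(G_D)$ back along $\pi$ and using $E(I)\subseteq I$ together with faithfulness of $E_D$ and the intertwining $E_D\circ\pi=res_D\circ E$, and $(3)\Rightarrow(2)$ by showing $\pi(Ideal_{C_r^*(G)}[x])=\{0\}$ so that $E(x)\in C_0(G^{(0)}\setminus D)\subseteq Ideal_{C_r^*(G)}[x]$. The only cosmetic difference is that you phrase the residual intersection step contrapositively (producing a non-zero element of $\tilde K\cap C_0(D)$) where the paper assumes $I\cap C_0(D)=\{0\}$ and derives $I=\{0\}$.
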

	\begin{proof}
		We start by showing $(1)\Rightarrow (3)$. For the residual intersection property let $D\subseteq G^{(0)}$ be a closed $G$-invariant subset and $I\subseteq C_r^*(G_D)$ an ideal. Suppose that $I\cap C_0(D)= \lbrace 0\rbrace$. Consider the ideal $J:=\pi^{-1}(I)\subseteq C_r^*(G)$. Then by $(1)$ we have $E(J)\subseteq J\cap C_0(G^{(0)})$ and thus $E_D(I)=E_D(\pi(J))=res_D(E(J))\subseteq res_D(J\cap C_0(G^{(0)}))\subseteq I\cap C_0(D)=\lbrace 0\rbrace$. But then also $I=\lbrace 0\rbrace$.
		
		For the second part let $I\subseteq C_r^*(G)$ be any ideal and $x\in I$, such that $\pi(x)\in C_0(D)$. By $(1)$ we have $E(x)\in I$ and thus $0=res_D(E(x))=E_D(\pi(x))=\pi(x)$.
		
		To show $(3)\Rightarrow (2)$ let $x\in C_r^*(G)_+$ be given. Let $I:=Ideal_{C_r^*(G)}[x]$ and $D$ be the corresponding closed, $G$-invariant subset of $G^{(0)}$. Applying the second part of $(3)$, we obtain $\pi(I)\cap C_0(D)=\lbrace 0\rbrace$. Since $G$ has the residual intersection property this can only happen if the ideal $\pi(I)$ is zero. Thus $res_D(E(x))=E_D(\pi(x))=0$ and $E(x)\in ker(res_D)=I\cap C_0(G^{(0)})\subseteq I$.
		
		The last implication is $(2)\Rightarrow (1)$: Let $I\subseteq C_r^*(G)$ be an ideal. By $(2)$ we have $E(I_+)\subseteq I$. Since every element in $I$ is a linear combination of positive elements and $E$ is linear we get $E(I)\subseteq I$ and then also $Ideal_{C_r^*(G)}[E(I)]\subseteq I$.
	\end{proof}
		
 We arrive at the main result of this section:
\begin{thm}\label{Thm:IdealStructure}
	Let $G$ be an étale groupoid. Then the following are equivalent:
	\begin{enumerate}
		\item $\Theta:\mathcal{I}(C_r^*(G))\rightarrow \mathcal{O}(G^{(0)})$ is a bijection.
		\item $C_0(G^{(0)})$ separates the ideals of $C_r^*(G)$.
		\item $G$ is inner exact and satisfies the residual intersection property.
	\end{enumerate}
	Moreover, if the above properties are satisfied, then $\Theta$ is an isomorphism of lattices and hence a homeomorphism.
\end{thm}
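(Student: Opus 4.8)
The plan is to obtain all three equivalences purely by assembling the lemmas and the proposition already proved in this section, using $\Theta\circ\Xi=\id_{\mathcal{O}(G^{(0)})}$ from Lemma~\ref{Lem:Surjectivity} as the backbone. The equivalence $(1)\Leftrightarrow(2)$ is essentially a tautology: $\Theta$ is always surjective by Lemma~\ref{Lem:Surjectivity}, so it is a bijection precisely when it is injective, and injectivity of $\Theta$ is by definition the statement that $C_0(G^{(0)})$ separates the ideals of $C_r^*(G)$.

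For $(2)\Leftrightarrow(3)$ I would first rewrite $(2)$ via Lemma~\ref{Lemma:CharacterizationsOfIdealSeparation}: it is equivalent to the identity $I=Ideal_{C_r^*(G)}[E(I)]$ holding for every ideal $I\subseteq C_r^*(G)$, and I split this into the two inclusions $I\subseteq Ideal_{C_r^*(G)}[E(I)]$ and $Ideal_{C_r^*(G)}[E(I)]\subseteq I$. To prove $(2)\Rightarrow(3)$, note that the first inclusion, holding for all $I$, gives inner exactness of $G$ by Lemma~\ref{Lemma:IdealInclusionAndInnerExactness}, while the second inclusion, holding for all $I$, is precisely condition $(1)$ of Proposition~\ref{Prop:IdealInclusionAndResidualIntersection}, whence $(3)$ of that proposition and in particular the residual intersection property. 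Conversely, for $(3)\Rightarrow(2)$, inner exactness yields the first inclusion again by Lemma~\ref{Lemma:IdealInclusionAndInnerExactness}; and for the second inclusion I would invoke Proposition~\ref{Prop:IdealInclusionAndResidualIntersection} in the direction $(3)\Rightarrow(1)$, observing that its hypothesis $(3)$ is met: the residual intersection property is part of our assumption, and the auxiliary condition $\pi(I)\cap C_0(D)=\{0\}$ for every ideal $I$ is automatic once $G$ is inner exact, by Lemma~\ref{inner exact and intersection}. The one point that deserves to be spelled out rather than waved through is exactly this last step — that inner exactness does double duty, simultaneously supplying one of the two inclusions directly and, through Lemma~\ref{inner exact and intersection}, discharging the extra hypothesis in Proposition~\ref{Prop:IdealInclusionAndResidualIntersection}$(3)$ — since otherwise the pieces do not quite line up. I do not anticipate any genuine obstacle beyond this bookkeeping: all the analytic content has been pushed into the preceding lemmas.

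Finally, for the ``moreover'' assertion, assume $(1)$--$(3)$ hold, so that $\Theta$ is a continuous bijection. Since $\Theta\circ\Xi=\id_{\mathcal{O}(G^{(0)})}$ and $\Theta$ is bijective, we get $\Xi=\Theta^{-1}$, and $\Xi$ is continuous because it respects inclusions; hence $\Theta$ is a homeomorphism for the Fell topologies. Moreover $\Theta$ is order-preserving by Lemma~\ref{Lemma:IdealsGiveInvariantSubsets}$(2)$ and its inverse $\Xi$ is order-preserving as well, so $\Theta$ is an isomorphism of posets; as $\mathcal{I}(C_r^*(G))$ and $\mathcal{O}(G^{(0)})$ are both lattices, an order isomorphism between them automatically preserves finite meets and joins, so $\Theta$ is a lattice isomorphism. (Alternatively one reads off $\Theta(I\cap J)=U_I\cap U_J$ directly from Lemma~\ref{Lemma:IdealsGiveInvariantSubsets}$(3)$, and the join formula then follows by applying $\Xi$.)
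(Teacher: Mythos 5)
Your proof is correct, and the equivalence of (1)--(3) is assembled exactly as the paper intends: (1)$\Leftrightarrow$(2) from surjectivity of $\Theta$ (Lemma~\ref{Lem:Surjectivity}), and (2)$\Leftrightarrow$(3) by splitting $I=Ideal_{C_r^*(G)}[E(I)]$ into the two inclusions governed by Lemma~\ref{Lemma:IdealInclusionAndInnerExactness} and Proposition~\ref{Prop:IdealInclusionAndResidualIntersection}, with Lemma~\ref{inner exact and intersection} discharging the auxiliary hypothesis in Proposition~\ref{Prop:IdealInclusionAndResidualIntersection}(3). The point you flag as needing care --- that inner exactness does double duty --- is indeed the only place where the pieces must be fitted together, and you fit them correctly. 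Where you genuinely depart from the paper is the ``moreover'' clause. The paper proves the lattice isomorphism by hand: Lemma~\ref{Lemma:IdealsGiveInvariantSubsets} already gives $U_{I\cap J}=U_I\cap U_J$ and $U_I\cup U_J\subseteq U_{I+J}$, and the remaining inclusion $U_{I+J}\subseteq U_I\cup U_J$ is checked by a short computation with the conditional expectation: if $a\in I$, $b\in J$ and $a+b\in C_0(G^{(0)})$, then $E(a)\in I\cap C_0(G^{(0)})$ and $E(b)\in J\cap C_0(G^{(0)})$ by Proposition~\ref{Prop:IdealInclusionAndResidualIntersection}, so $a+b=E(a)+E(b)\in (I\cap C_0(G^{(0)}))+(J\cap C_0(G^{(0)}))$. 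You instead observe that $\Theta$ is an order bijection with order-preserving inverse $\Xi$, and that any order isomorphism between lattices automatically preserves finite meets and joins. Both routes are valid; yours is shorter and makes transparent that no further analytic input is needed once bijectivity is established, while the paper's computation has the side benefit of exhibiting the concrete identity $(I+J)\cap C_0(G^{(0)})=(I\cap C_0(G^{(0)}))+(J\cap C_0(G^{(0)}))$, which is of independent interest.
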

\begin{proof}
	Everything follows by combining Proposition \ref{Prop:IdealInclusionAndResidualIntersection} and Lemmas \ref{Lemma:CharacterizationsOfIdealSeparation}, \ref{Lemma:IdealInclusionAndInnerExactness} and \ref{inner exact and intersection} except that $\Theta$ is an isomorphism of lattices. By Lemma \ref{Lemma:IdealsGiveInvariantSubsets} it only remains to check that $U_{I+J}\subseteq U_I\cup U_J$, whenever $I,J\in\mathcal{I}(C_r^*(G))$. It is enough to show that $(I+J)\cap C_0(G^{(0)})\subseteq (I\cap C_0(G^{(0)}))+(J+C_0(G^{(0)}))$. Suppose that $a\in I$ and $b\in J$ such that $a+b\in C_0(G^{(0)})$. Then, by Proposition~\ref{Prop:IdealInclusionAndResidualIntersection} we have $E(a)\in Ideal_{C_r^*(G)}[a]\subseteq I$. Thus $E(a)\in I\cap C_0(G^{(0)})$. Similarly we have $E(b)\in J\cap C_0(G^{(0)})$. Thus we obtain $a+b=E(a+b)=E(a)+E(b)\in (I\cap C_0(G^{(0)}))+(J\cap C_0(G^{(0)}))$ as desired.
\end{proof}

In practice the (residual) intersection property is of course not very satisfying as it is formulated on the level of the reduced groupoid $C^*$-algebra rather then the groupoid itself.
The following result gives at least a sufficient criterion in terms of $G$ for the intersection property to hold and shows that it is also necessary in the case that $C^*(G)=C_r^*(G)$ (see \cite{BCFS14} for an easy definition of the full $C^*$-algebra of an étale groupoid).
\begin{prop}{\cite[Proposition 5.5]{BCFS14}}\label{Prop:TopPrincipalAndNontrivialIntersections}
Let $G$ be an étale groupoid. If $G$ is topologically principal, then $G$ has the intersection property.
If $C^*(G)=C_r^*(G)$ and $G$ has the intersection property, then $G$ is topologically principal.
\end{prop}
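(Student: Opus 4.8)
The plan is to prove only the first implication (topological principality $\Rightarrow$ intersection property), since the second is a known reduced-vs-full statement and the excerpt credits \cite{BCFS14}. So let $G$ be topologically principal and suppose $I\subseteq C_r^*(G)$ is a non-zero ideal; I must produce a non-zero element of $I\cap C_0(G^{(0)})$. The natural tool is the faithful conditional expectation $E:C_r^*(G)\rightarrow C_0(G^{(0)})$ established in the Preliminaries, together with formula~(\ref{FormulaConditionalExpectation}), $E(x)(u)=\langle\pi_u(x)\delta_u,\delta_u\rangle$. Pick a non-zero positive $x\in I$; since $E$ is faithful, $E(x)\neq 0$, so it suffices to show $E(x)\in I$. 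Thus the whole problem reduces to approximating $E(x)$ by elements of $I$, i.e.\ to showing that one can ``average away'' the off-diagonal part of $x$ using group-like elements supported near units with trivial isotropy.

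The key steps, in order: (1) Reduce to showing, for a fixed non-zero positive $x\in I$, that $E(x)$ lies in the closure of $I$; by the ideal property it is enough to find, for each $\epsilon>0$, some $a\in I$ with $\|E(x)-a\|<\epsilon$. (2) Approximate $x$ in norm by some $f\in C_c(G)$ with $\|x-f\|$ small; then $\|E(x)-E(f)\|$ is small too, and it suffices to handle $E(f)$ up to perturbing by something in $I$ of small norm—more precisely, work with $f' = y^*xy$ for suitable $y\in C_c(G^{(0)})$ cutting down to a small neighbourhood. (3) Choose a unit $u_0\in G^{(0)}$ near which $\|E(f)\|$ is (almost) attained, and—using topological principality—a nearby unit $u$ with trivial isotropy $G_u^u=\{u\}$. (4) The crucial geometric step: the support of $f$ meets $G\setminus G^{(0)}$ in finitely many bisections; around $u$ (which has trivial isotropy), for points $g\in\supp f\setminus G^{(0)}$ with $d(g)$ and $r(g)$ both near $u$, one has $d(g)\neq r(g)$ for such points cannot be units and cannot be isotropy elements at $u$. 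Hence one can find $h\in C_c(G^{(0)})$, $0\le h\le 1$, supported in a small neighbourhood $W$ of $u$ with $h(u)=1$, such that $h\ast f\ast h$ is supported in $G^{(0)}$, i.e.\ $h\ast f\ast h = h\ast E(f)\ast h \in C_0(G^{(0)})$; this element lies in $I$ because $f$ does (after replacing $f$ by $x$ again to stay inside $I$), and its supremum norm is close to $\|E(x)\|$ by continuity and the choice of $u_0,u$. Letting $\epsilon\to 0$ and taking suprema over an approximate identity of such $h$'s forces $E(x)\in I$, hence $I\cap C_0(G^{(0)})\neq\{0\}$.

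The main obstacle is step~(4): the disjointness argument must be made precise \emph{uniformly}. The subtlety is that as one shrinks the neighbourhood $W$ of $u$, the finitely many bisections in the support of $f$ that pass near $u$ — other than the one through $u$ itself (if any) lying in $G^{(0)}$ — must be pushed out of $W\times W$, and one has to argue that only \emph{finitely many} such bisections are relevant and that each can be excluded by taking $W$ small. This uses compactness of $\supp f$ together with the fact that $G^{(0)}$ is open and closed in $G$, so $\supp f\setminus G^{(0)}$ is compact, can be covered by finitely many bisections disjoint from $G^{(0)}$, and the continuous maps $d,r$ restricted to each such bisection, evaluated at $u$, avoid $u$ precisely because $u$ has trivial isotropy and points of these bisections are not units. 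A clean way to organize this is: for $g\in\supp f$ with $d(g)=u=r(g)$ we would need $g\in G_u^u=\{u\}$, contradicting $g\notin G^{(0)}$; so the closed set $\{g\in\supp f: d(g)=r(g)=u\}$ is contained in $G^{(0)}$, and by a standard compactness/continuity argument there is a neighbourhood $W$ of $u$ with $\{g\in\supp f: d(g),r(g)\in W\}\subseteq G^{(0)}$, which is exactly what makes $h\ast f\ast h$ diagonal. The rest is bookkeeping with the conditional expectation and norm estimates.
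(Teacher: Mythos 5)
Your localization step (4) is the right technical core and is carried out correctly: at a unit $u$ with trivial isotropy one has $\{g\in\supp f: d(g)=r(g)=u\}\subseteq G_u^u=\{u\}$, and compactness of $\supp f\setminus G^{(0)}$ (closed in $\supp f$ because $G^{(0)}$ is clopen) yields a neighbourhood $W$ of $u$ with $\{g\in\supp f: d(g),r(g)\in W\}\subseteq G^{(0)}$, so that $h\ast f\ast h=h\ast E(f)\ast h\in C_c(G^{(0)})$ for $h\in C_c(W)$, $0\le h\le1$, $h(u)=1$. The gap is in how you conclude. What the construction hands you is an element $hxh\in I$ lying within $\|x-f\|$ of the element $hE(f)h\in C_0(G^{(0)})$, whose norm is at least $\|E(f)\|-\epsilon$ if $u$ is chosen (by density of the trivial-isotropy points and continuity of $E(f)$) where $\abs{E(f)}$ nearly attains its supremum. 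It does \emph{not} give elements of $I$ converging to $E(x)$: the functions $hE(f)h$ are cut-downs of $E(f)$ to small neighbourhoods of single points, and summing over a partition of unity does not reassemble $E(x)$, since $\sum_i h_i x h_i\neq(\sum_i h_i)x(\sum_i h_i)$. Worse, the intermediate goal ``$E(x)\in I$ for all positive $x\in I$'' is genuinely stronger than the intersection property: by Proposition~\ref{Prop:IdealInclusionAndResidualIntersection} it is equivalent to the residual intersection property together with the vanishing condition $\pi(I)\cap C_0(D)=\lbrace0\rbrace$, and topological principality of $G$ alone does not deliver these (this is precisely why the paper must assume \emph{essential} principality elsewhere). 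So no amount of bookkeeping will close the argument as you have set it up.

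The standard way to finish from your step (4) --- and the route of \cite[Proposition~5.5]{BCFS14}, which the paper simply cites rather than reproves --- is by contradiction: if $I\cap C_0(G^{(0)})=\lbrace0\rbrace$, then the quotient map $q:C_r^*(G)\rightarrow C_r^*(G)/I$ is injective, hence isometric, on $C_0(G^{(0)})$; but $q(hE(f)h)=q(hE(f)h-hxh)$ because $hxh\in I$, whence
$$\|E(f)\|-\epsilon\leq\|hE(f)h\|=\|q(hE(f)h)\|\leq\|hE(f)h-hxh\|\leq\|f-x\|<\epsilon,$$
which is absurd for $\epsilon$ small since $E(x)\neq0$ forces $\|E(f)\|$ to be bounded below. (Alternatively, R{\o}rdam's lemma gives $(hE(f)h-\epsilon)_+=d^*(hxh)d$ for some $d$, exhibiting a non-zero element of $I\cap C_0(G^{(0)})$ directly.) Either of these replaces your final paragraph; the rest of your outline is sound.
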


As a consequence we obtain:
\begin{cor}\label{essprin+inner exact}
If $G$ is an essentially principal, inner exact, étale groupoid, then $C_0(G^{(0)})$ separates the ideals of $C_r^*(G)$. If $G$ is amenable, then $C_0(G^{(0)})$ separates the ideals of $C_r^*(G)$ if and only if $G$ is essentially principal.
\end{cor}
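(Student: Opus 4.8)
The plan is to deduce both statements directly from the machinery already assembled. For the first statement, suppose $G$ is essentially principal, inner exact, and �tale. By Theorem~\ref{Thm:IdealStructure}, it suffices to verify that $G$ satisfies the residual intersection property, i.e.\ that $G_D$ has the intersection property for every closed $G$-invariant subset $D\subseteq G^{(0)}$. Here I would invoke the definition of essential principality: for every such $D$, the restricted groupoid $G_D$ is topologically principal (and, as noted in the Preliminaries, it is again a locally compact Hausdorff �tale groupoid in its own right, with unit space $D$). Now apply the first half of Proposition~\ref{Prop:TopPrincipalAndNontrivialIntersections} to $G_D$: topological principality implies $G_D$ has the intersection property. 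Since $D$ was arbitrary, $G$ has the residual intersection property, and combined with inner exactness, Theorem~\ref{Thm:IdealStructure} gives that $C_0(G^{(0)})$ separates the ideals of $C_r^*(G)$.

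For the second statement, assume additionally that $G$ is amenable, so that $C^*(G)=C_r^*(G)$ (and, crucially, the same holds for every restriction $G_D$ to a closed invariant set $D$, since amenability passes to such subgroupoids). The forward direction is already covered by the first statement. For the converse, suppose $C_0(G^{(0)})$ separates the ideals of $C_r^*(G)$. By Theorem~\ref{Thm:IdealStructure} this gives the residual intersection property, so for every closed $G$-invariant $D$ the groupoid $G_D$ has the intersection property; since $C^*(G_D)=C_r^*(G_D)$ by amenability, the second half of Proposition~\ref{Prop:TopPrincipalAndNontrivialIntersections} forces $G_D$ to be topologically principal. As $D$ ranges over all closed invariant subsets, this is precisely the assertion that $G$ is essentially principal.

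The only genuine subtlety I anticipate is the bookkeeping around hereditary properties of the restricted groupoids $G_D$: one must know that $G_D$ is still �tale (so that the cited results apply) and, for the amenable case, that amenability descends to $G_D$ so that $C^*(G_D)=C_r^*(G_D)$ — otherwise the converse direction of Proposition~\ref{Prop:TopPrincipalAndNontrivialIntersections} cannot be applied at the level of $D$. Both facts are standard (the former is recalled in the Preliminaries), so no real calculation is needed; the proof is essentially a one-line chaining of Theorem~\ref{Thm:IdealStructure} with Proposition~\ref{Prop:TopPrincipalAndNontrivialIntersections} applied uniformly over all closed invariant subsets.
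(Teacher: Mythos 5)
Your argument is correct and follows essentially the same route as the paper: the first statement is Theorem~\ref{Thm:IdealStructure} plus the first half of Proposition~\ref{Prop:TopPrincipalAndNontrivialIntersections} applied to each $G_D$, and the converse in the amenable case uses the second half of that proposition together with the fact that $C^*(G_D)=C_r^*(G_D)$ descends from amenability. The only point worth making explicit (which both you and the paper leave implicit) is that in the amenable case the forward direction still needs inner exactness, which holds because amenability forces the full and reduced $C^*$-algebras of $G$ and all $G_D$ to coincide.
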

\begin{proof}
We only need to show, that $G$ has the residual intersection property. So let $D\subseteq G^{(0)}$ be a closed $G$-invariant subset and $I\subseteq C_r^*(G_D)$ a non-zero ideal. Since $G$ is essentially principal, $G_D$ is topologically principal and thus, by Proposition \ref{Prop:TopPrincipalAndNontrivialIntersections}, we obtain $I\cap C_0(D)\neq\lbrace 0\rbrace$.\\
If $G$ is amenable we have $C^*(G)=C_r^*(G)$ and a quick diagram chase gives that also $C^*(G_D)=C_r^*(G_D)$ for all closed $G$-invariant subsets $D\subseteq G^{(0)}$. Again the result follows directly from Proposition \ref{Prop:TopPrincipalAndNontrivialIntersections} and Theorem \ref{Thm:IdealStructure}.
\end{proof}
\begin{rem}
Recently, it was proved by Tikuisis, White and Winter that every faithful tracial state on a separable nuclear $C^*$-algebra in the UCT class is quasidiagonal \cite[Theorem~A]{MR3583354}. Hence, it follows from \cite[Corollary~4.9]{MR1191252} or our previous corollary that if $G$ is a second countable étale groupoid which is amenable and essentially principal, then every tracial state on $C_r^*(G)$ is quasidiagonal. Indeed, let $\tau$ be any tracial state on $C_r^*(G)$. If $\pi_\tau$ denotes the GNS representation of $\tau$, then $\tau$ vanishes on the two-sided closed ideal $I:=\ker \pi_\tau$. Thus, there is a faithful induced traical state $\dot{\tau}$ on $C_r^*(G)/I$ such that $\tau=\dot{\tau}\circ p$, where $p:C_r^*(G) \rightarrow C_r^*(G)/I$ is the quotient map. Clearly, it suffices to show that $\dot{\tau}$ is quasidiagonal. Since $C_0(G^{(0)})$ separates the ideals of $C_r^*(G)$, there is a closed $G$-invariant subset $D$ of $G^{(0)}$ such that $C_r^*(G)/I=C_r^*(G_D)$. As $G_D$ is second countable and amenable, it follows that $C_r^*(G_D)$ is a separable nuclear $C^*$-algebra in the UCT class (see \cite{MR1703305} for UCT). We complete our proof by \cite[Theorem~A]{MR3583354}.
\end{rem}

\begin{cor}\label{simple}
	Let $G$ be an étale groupoid such that there exists a unit $u\in G^{(0)}$ with $G_u^u=\lbrace u\rbrace$ (e.g. if $G$ is topologically principal). Then $C_r^*(G)$ is simple if and only if $G$ is minimal.
\end{cor}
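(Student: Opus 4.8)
The plan is to prove the two implications separately; the content is mostly a matter of packaging results already established in this section.

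\emph{Necessity of minimality.} I would argue the contrapositive. Suppose $G$ is not minimal, and choose a proper, non-trivial closed invariant subset $D\subseteq G^{(0)}$; put $U:=G^{(0)}\setminus D$, a non-empty proper open invariant subset. By Lemma~\ref{Lemma:RestrictedGroupoidIsIdeal} the set $C_r^*(G_U)=Ideal_{C_r^*(G)}[C_0(U)]$ is a closed two-sided ideal of $C_r^*(G)$. It is non-zero because $C_0(U)\neq\{0\}$, and it is proper because Lemma~\ref{Lem:Surjectivity} gives $C_0(G^{(0)})\cap C_r^*(G_U)=C_0(U)\subsetneq C_0(G^{(0)})$, so $C_r^*(G_U)\neq C_r^*(G)$. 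Hence $C_r^*(G)$ is not simple. Note that this direction does not use the hypothesis on $u$.

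\emph{Sufficiency.} Assume $G$ is minimal and $G_u^u=\{u\}$ for some $u\in G^{(0)}$. The first step is to upgrade this hypothesis to topological principality. The orbit $r(G_u)=\{r(g):d(g)=u\}$ is an invariant subset of $G^{(0)}$, and its closure is again invariant (a standard consequence of $G$ being étale: one propagates a net converging to $d(g)$ through an open bisection around a given arrow $g$, using that $r$ and $d$ restrict to homeomorphisms there). By minimality, $\overline{r(G_u)}=G^{(0)}$. Moreover, every point $v$ in the orbit of $u$ has trivial isotropy, since for any $g$ with $d(g)=u$ and $r(g)=v$ the map $h\mapsto ghg^{-1}$ is a group isomorphism $G_u^u\cong G_v^v$. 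Thus the set of units with trivial isotropy is dense, i.e. $G$ is topologically principal, and Proposition~\ref{Prop:TopPrincipalAndNontrivialIntersections} then gives that $G$ has the intersection property. Now let $I\subseteq C_r^*(G)$ be a non-zero ideal. The intersection property yields $I\cap C_0(G^{(0)})\neq\{0\}$, so the corresponding open set $U_I$ is non-empty, and it is invariant by Lemma~\ref{Lemma:IdealsGiveInvariantSubsets}. Minimality forces $U_I=G^{(0)}$, so $C_0(G^{(0)})=C_0(U_I)=I\cap C_0(G^{(0)})\subseteq I$, and therefore $I\supseteq Ideal_{C_r^*(G)}[C_0(G^{(0)})]=C_r^*(G)$ by Lemma~\ref{Lemma:RestrictedGroupoidIsIdeal}. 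Hence $I=C_r^*(G)$, so $C_r^*(G)$ is simple.

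\emph{Main obstacle.} Essentially everything here is bookkeeping with the maps $\Theta,\Xi$ and the lemmas of this section; the one point that needs a genuine (if small) topological argument is that the closure of an orbit is invariant, and that is exactly where étaleness enters. As an alternative route for the sufficiency direction, one could invoke Theorem~\ref{Thm:IdealStructure} directly, after observing that for a minimal groupoid the residual intersection property collapses to the plain intersection property (there being no non-trivial closed invariant subsets) and inner exactness is automatic, so that $\Theta:\mathcal I(C_r^*(G))\to\mathcal O(G^{(0)})=\{\emptyset,G^{(0)}\}$ is a bijection.
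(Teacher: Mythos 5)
Your proof is correct, and it follows a more self-contained route than the paper's in both directions. For ``simple $\Rightarrow$ minimal'' the paper simply invokes \cite[Proposition~5.7]{BCFS14}, whereas you build the obstruction by hand: for a proper non-trivial open invariant $U$ the ideal $C_r^*(G_U)=Ideal_{C_r^*(G)}[C_0(U)]$ of Lemma~\ref{Lemma:RestrictedGroupoidIsIdeal} is non-zero, and Lemma~\ref{Lem:Surjectivity} shows it is proper; this keeps the argument internal to the section. For the converse, both proofs first upgrade the hypothesis to topological principality via minimality (the paper takes the closure of the set of trivial-isotropy units, you take the closure of the orbit of $u$ after noting that conjugation $h\mapsto ghg^{-1}$ transports trivial isotropy along the orbit --- the same invariance-of-closures fact underlies both). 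The paper then routes through Theorem~\ref{Thm:IdealStructure}, which requires the additional observations that minimal groupoids are automatically inner exact and that topological and essential principality coincide in the minimal case; you instead go directly from the intersection property of Proposition~\ref{Prop:TopPrincipalAndNontrivialIntersections}: a non-zero ideal $I$ meets $C_0(G^{(0)})$ non-trivially, the invariant open set $U_I$ of Lemma~\ref{Lemma:IdealsGiveInvariantSubsets} is then forced by minimality to be all of $G^{(0)}$, and Lemma~\ref{Lemma:RestrictedGroupoidIsIdeal} gives $I=C_r^*(G)$. This bypasses inner exactness entirely (which is anyway only needed for injectivity of $\Theta$, not for simplicity), at the cost of redoing a small piece of the bookkeeping that Theorem~\ref{Thm:IdealStructure} packages; your closing remark correctly identifies the paper's shortcut as the alternative.
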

\begin{proof}
Suppose that $G$ is minimal.
	By assumption $\lbrace u\in G^{(0)}\mid G_u^u=\lbrace u\rbrace\rbrace$ is non-empty and its closure is a closed invariant subset of the unit space. Using minimality we conclude, that it must be all of $G^{(0)}$. Hence $G$ is topologically principal. Since every minimal groupoid is automatically inner exact and in the minimal case topological principality coincides with essential principality, $C_r^*(G)$ is simple by Theorem \ref{Thm:IdealStructure}.
	If $C_r^*(G)$ is simple, then the ideal of $C_r^*(G)$ generated by a non-zero $f\in C_c(G^{(0)})\subseteq C_r^*(G)$ is of course all of $C_r^*(G)$ and hence Proposition 5.7 of \cite{BCFS14} implies that $G$ is minimal. 
\end{proof}
Recall that a $C^*$-algebra has the \textit{ideal property} (IP) if every closed two-sided ideal is generated (as an ideal) by its projections.
\begin{cor}\label{Cor:IdealSeparationImplies(IP)}
	Let $G$ be an ample groupoid. If $G$ is inner exact and has the residual intersection property, then $C_r^*(G)$ has the ideal property (IP).
\end{cor}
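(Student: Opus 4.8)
The plan is to read this off directly from Theorem~\ref{Thm:IdealStructure}, using only the extra feature that an ample groupoid has totally disconnected unit space. First I would observe that since $G$ is inner exact and has the residual intersection property, Theorem~\ref{Thm:IdealStructure} applies, so $C_0(G^{(0)})$ separates the ideals of $C_r^*(G)$; by the equivalence $(2)\Leftrightarrow(1)$ in Lemma~\ref{Lemma:CharacterizationsOfIdealSeparation} this means that every closed two-sided ideal $I\subseteq C_r^*(G)$ satisfies
$$I=Ideal_{C_r^*(G)}[I\cap C_0(G^{(0)})].$$
So it suffices to show that the abelian ideal $I\cap C_0(G^{(0)})$ of $C_0(G^{(0)})$ is contained in the ideal of $C_r^*(G)$ generated by the projections of $C_r^*(G)$ lying in $I$.

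Next I would use ampleness of $G$: by \cite[Proposition~4.1]{E10} the unit space $G^{(0)}$ is a locally compact, Hausdorff, totally disconnected space, hence admits a basis of compact open sets. Consequently every closed ideal of $C_0(G^{(0)})$ is of the form $C_0(U)$ for an open $U\subseteq G^{(0)}$, and $C_0(U)$ is the closed linear span of the characteristic functions $1_V$ of compact open sets $V\subseteq U$: any $f\in C_c(U)$ is approximated uniformly by finite linear combinations of such $1_V$'s, by partitioning $\supp f$ into finitely many compact open pieces on which $f$ varies by less than $\epsilon$. Each such $1_V$ is a projection in $C_0(G^{(0)})$, hence, via the isometric embedding $\iota:C_0(G^{(0)})\hookrightarrow C_r^*(G)$, a projection in $C_r^*(G)$. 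Applying this with $U$ the open invariant set corresponding to $I\cap C_0(G^{(0)})$, we get that $I\cap C_0(G^{(0)})$ is the closed linear span of projections it contains; in particular these projections lie in $I$, and $I\cap C_0(G^{(0)})\subseteq Ideal_{C_r^*(G)}[P]$, where $P$ is the set of projections of $C_r^*(G)$ belonging to $I$.

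Combining the two steps gives $I=Ideal_{C_r^*(G)}[I\cap C_0(G^{(0)})]\subseteq Ideal_{C_r^*(G)}[P]\subseteq I$, so $I$ is generated as an ideal by the projections it contains; since $I$ was arbitrary, $C_r^*(G)$ has the ideal property (IP). I do not expect any real obstacle here: the content is entirely in Theorem~\ref{Thm:IdealStructure}, and the only point needing (routine) care is the elementary approximation argument showing that compact open subsets of $G^{(0)}$ densely span $C_0(U)$, which is exactly where total disconnectedness of $G^{(0)}$ — equivalently, ampleness of $G$ — is used.
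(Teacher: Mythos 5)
Your proof is correct and follows essentially the same route as the paper: apply Theorem~\ref{Thm:IdealStructure} (via Lemma~\ref{Lemma:CharacterizationsOfIdealSeparation}) to get $I=Ideal_{C_r^*(G)}[I\cap C_0(G^{(0)})]$, then use total disconnectedness of $G^{(0)}$ to see that $I\cap C_0(G^{(0)})$ is generated by the projections it contains. The only cosmetic difference is that the paper simply invokes the fact that $C_0(G^{(0)})$ has the ideal property, whereas you spell out the underlying approximation by characteristic functions of compact open sets.
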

\begin{proof}
	Let $I\subseteq C_r^*(G)$ be an ideal. By our assumptions on $G$ and Theorem \ref{Thm:IdealStructure} we know that $I$ is generated as an ideal by $I\cap C_0(G^{(0)})$.
	Since $C_0(G^{(0)})$ has the ideal property, $I\cap C_0(G^{(0)})$ is generated as an ideal of $C_0(G^{(0)})$ by its projections. Hence $I$ is contained in the ideal generated by the projections of $I\cap C_0(G^{(0)})$.
\end{proof}
We will now use our results to describe the space of prime ideals $Prime(C_r^*(G))$. Consider the quasi-orbit space $\mathcal{Q}(G)$ of $G$ which is defined to be the quotient of $G^{(0)}$ by the equivalence relation 
$$u\sim v\ \Leftrightarrow\ \overline{Gu}=\overline{Gv},$$
where $Gu=\lbrace r(g)\mid g\in G_u\rbrace$.
\begin{rem}
Let $G$ be an étale groupoid and $\rho:G^{(0)}\rightarrow \mathcal{Q}(G)$ be the quotient map. Then $\rho$ is a continuous open surjection. It follows that $\mathcal{Q}(G)$ is $T_0$ and totally Baire (i.e. each closed subset is a Baire space).
\end{rem}
We also need the following easy Lemma:
\begin{lem}\label{Lemma:QO-space is a subspace of open-inv. subsets}
	Let $G$ be an étale groupoid. Then the assignment $\overline{Gu}\mapsto G^{(0)}\setminus\overline{Gu}$ defines a continuous injective map $i:\mathcal{Q}(G)\rightarrow \mathcal{O}(G^{(0)})$, which is a homeomorphism onto its image. Hence we can identify $\mathcal{Q}(G)$ with the quotient topology with a subspace of $\mathcal{O}(G^{(0)})$.
\end{lem}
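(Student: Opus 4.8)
The plan is to verify, in turn, that $i$ is well defined and injective, that it is continuous, and finally that it is open onto its image; only the last step requires an idea, the rest being bookkeeping with the Fell topology and with closures of invariant sets.

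\emph{Well-definedness and injectivity.} First I would note that for each $u\in G^{(0)}$ the orbit $Gu$ is invariant (if $v=r(g)$ with $d(g)=u$ and $d(h)=v$, then $(h,g)$ is composable and $r(h)=r(hg)\in Gu$) and that its closure $\overline{Gu}$ is again invariant, using that $G$ is étale: given $g$ with $d(g)\in\overline{Gu}$ and an open neighbourhood $V$ of $r(g)$, pick an open bisection $S\ni g$ with $r(S)\subseteq V$; then $d(S)$ is an open neighbourhood of $d(g)\in\overline{Gu}$, so $d(h)\in Gu$ for some $h\in S$, and invariance of $Gu$ gives $r(h)\in Gu$; as $r(h)\in r(S)\subseteq V$ we get $V\cap Gu\neq\emptyset$, whence $r(g)\in\overline{Gu}$. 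So $G^{(0)}\setminus\overline{Gu}\in\mathcal{O}(G^{(0)})$; the assignment depends only on the $\sim$-class of $u$, and it is injective since $G^{(0)}\setminus\overline{Gu}=G^{(0)}\setminus\overline{Gv}$ forces $\overline{Gu}=\overline{Gv}$, i.e. $u\sim v$.

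\emph{Continuity.} Since $\rho$ is a quotient map (a continuous open surjection, as recorded above) it suffices to show that $i\circ\rho:G^{(0)}\to\mathcal{O}(G^{(0)})$ is continuous. Under the identification of $\mathcal{I}(C_0(G^{(0)}))$ with the open subsets of $G^{(0)}$, a sub-basis for the topology of $\mathcal{O}(G^{(0)})$ is given by the sets $\mathcal{O}_U=\{V\in\mathcal{O}(G^{(0)}):U\not\subseteq V\}$, with $U\subseteq G^{(0)}$ open. For such $U$,
$$(i\circ\rho)^{-1}(\mathcal{O}_U)=\{u\in G^{(0)}:U\cap\overline{Gu}\neq\emptyset\}=\{u:U\cap Gu\neq\emptyset\}=d(r^{-1}(U)),$$
where the middle equality uses that $U$ is open (so it meets $\overline{Gu}$ iff it meets the dense subset $Gu$) and the last uses $U\cap Gu\neq\emptyset\iff u\in d(r^{-1}(U))$. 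This set is open since $r$ is continuous and $d$ is an open map (it is a local homeomorphism); hence $i$ is continuous.

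\emph{Openness onto the image, and the main obstacle.} Every open subset of $\mathcal{Q}(G)$ has the form $\rho(W)$ with $W=\rho^{-1}(\rho(W))$ open in $G^{(0)}$; such a $W$ is saturated for $\sim$, hence a union of orbits (an orbit lies in a single $\sim$-class), hence invariant, so $W\in\mathcal{O}(G^{(0)})$. The key identity is that, for open invariant $W$, one has $u\in W\iff W\cap\overline{Gu}\neq\emptyset\iff W\not\subseteq G^{(0)}\setminus\overline{Gu}$ (the forward implication because $u\in Gu\subseteq\overline{Gu}$, the backward one because $G^{(0)}\setminus W$ is closed and invariant, hence contains $\overline{Gu}$ whenever $u\notin W$). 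Using this together with the injectivity of $i$ and the saturation of $W$, one obtains
$$i(\rho(W))=\{V\in i(\mathcal{Q}(G)):W\not\subseteq V\}=\mathcal{O}_W\cap i(\mathcal{Q}(G)),$$
which is relatively open in $i(\mathcal{Q}(G))$ since $W$ is open in $G^{(0)}$. Therefore $i$ is a continuous open bijection onto its image, i.e. a homeomorphism onto it, which is the assertion. The only non-routine point is this last identity: recognising that ``$u\in W$'' is exactly a Fell-sub-basic condition imposed on $G^{(0)}\setminus\overline{Gu}$ is what forces the quotient topology on $\mathcal{Q}(G)$ to coincide with the subspace topology inherited from $\mathcal{O}(G^{(0)})$; everything else is routine manipulation of the étale structure.
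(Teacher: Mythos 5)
Your proof is correct, and the overall architecture (well-definedness/injectivity, continuity via the quotient map, then the identity $i(\rho(W))=\mathcal{O}_W\cap i(\mathcal{Q}(G))$ for saturated open $W$) matches the paper's. The one place where you genuinely diverge is the continuity step. The paper shows that $\lbrace u\in G^{(0)}\mid W\subseteq G^{(0)}\setminus\overline{Gu}\rbrace$ is closed by a net argument: given $u_\lambda\to u$ and $v\in Gu$, it invokes \cite[Proposition 1.15]{MR2288954} to lift a convergent net through the open surjection $d$ and conclude $v\in G^{(0)}\setminus W$. You instead identify the preimage of a sub-basic Fell-open set directly, via
$$(i\circ\rho)^{-1}(\mathcal{O}_U)=\lbrace u\mid U\cap\overline{Gu}\neq\emptyset\rbrace=\lbrace u\mid U\cap Gu\neq\emptyset\rbrace=d(r^{-1}(U)),$$
using only that $U$ is open (so it meets $\overline{Gu}$ iff it meets $Gu$) and that $d$ is an open map for an \'etale groupoid. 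This is shorter, avoids the external lifting lemma, and makes the role of \'etaleness transparent; the paper's net argument has the mild advantage of working verbatim for any locally compact groupoid with open range map. Your second half (the key equivalence $u\in W\Leftrightarrow W\not\subseteq G^{(0)}\setminus\overline{Gu}$ for open invariant $W$, using that $G^{(0)}\setminus W$ is closed and invariant) is the same argument as the paper's, just phrased in one line rather than as two separate containment checks.
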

\begin{proof}
	It is obvious that $i$ is injective, hence we only show that it is a homeomorphism onto its image. Recall that $\rho:G^{(0)}\rightarrow \mathcal{Q}(G)$ is the quotient map and a subbase for the topology of $\mathcal{O}(G^{(0)})$ is given by the sets $\mathcal{O}_W=\lbrace V\in \mathcal{O}(G^{(0)})\mid W\not\subseteq V\rbrace$, where $W$ is an open subset of $G^{(0)}$.
	
	We start by proving continuity of $i$: It is clearly enough to show that $i^{-1}(\mathcal{O}_W)=\lbrace \overline{Gu}\mid W\not\subseteq G^{(0)}\setminus\overline{Gu}\rbrace$ is open for every open subset $W\subseteq G^{(0)}$. As $\mathcal{Q}(G)$ carries the quotient topology this happens if and only if $\rho^{-1}(i^{-1}(\mathcal{O}_W))=\lbrace u\in G^{(0)}\mid W\not\subseteq G^{(0)}\setminus\overline{Gu}\rbrace$ is open, if and only if $\lbrace u\in G^{(0)}\mid W\subseteq G^{(0)}\setminus \overline{Gu}\rbrace$ is closed.
	Let $(u_\lambda)_\lambda$ be a net with $W\subseteq G^{(0)}\setminus \overline{Gu_\lambda}$ for all $\lambda$ and $u_\lambda\rightarrow u$ for some $u\in G^{(0)}$. We have to show that $W\subseteq G^{(0)}\setminus \overline{Gu}$ or equivalently $\overline{Gu}\subseteq G^{(0)}\setminus W$. To this end let $v\in Gu$. Then there exists a $g\in G_u$ such that $r(g)=v$. Since $d$ is an open surjection we can apply \cite[Proposition 1.15]{MR2288954} to (after passing to a subnet and relabeling if necessary) find $g_\lambda\in G_{u_\lambda}$ with $g_\lambda\rightarrow g$. Thus $v=r(g)=\lim_{\lambda}r(g_\lambda)\in G^{(0)}\setminus W$.
	As $v\in Gu$ was arbitrary it follows that $Gu\subseteq G^{(0)}\setminus W$ and hence also $\overline{Gu}\subseteq G^{(0)}\setminus W$ as desired.\\
	To see that $i$ is a homeomorphism onto its image let $U\subseteq \mathcal{Q}(G)$ be an open subset. Then $V:=\rho^{-1}(U)$ is open and invariant in $G^{(0)}$. We will show that $i(U)=\mathcal{O}_V \cap i(\mathcal{Q}(G))$, from which the result follows. Let $\overline{Gu}\in U$ be given. Then $V\not\subseteq G^{(0)}\setminus\overline{Gu}$ (if it was, then in particular $u\not\in V$, i.e. $\overline{Gu}=\rho(u)\not\in U$, which contradicts our assumption) and hence $i(\overline{Gu})\in \mathcal{O}_V$.\\
	If conversely $G^{(0)}\setminus \overline{Gu}\in \mathcal{O}_V$ then $\overline{Gu}\in U$ (if it was not, then $u\not\in V$ and hence $\overline{Gu}\subseteq G^{(0)}\setminus V$, which contradicts our assumption).
\end{proof}

\begin{thm}\label{Theorem:Prime Ideals}
Let $G$ be an étale groupoid. If $G$ is inner exact and has the residual intersection property, then the space of prime ideals $Prime(C_r^*(G))$ is homeomorphic to the quasi-orbit space $\mathcal{Q}(G)$.
\end{thm}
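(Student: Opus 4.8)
The plan is to construct a homeomorphism between $\mathcal{Q}(G)$ and $\mathrm{Prime}(C_r^*(G))$ by combining Theorem~\ref{Thm:IdealStructure} with Lemma~\ref{Lemma:QO-space is a subspace of open-inv. subsets}. Under the hypotheses, Theorem~\ref{Thm:IdealStructure} gives a lattice isomorphism (hence homeomorphism) $\Theta:\mathcal{I}(C_r^*(G))\to\mathcal{O}(G^{(0)})$; I would first show it restricts to a bijection between $\mathrm{Prime}(C_r^*(G))$ and the set of \emph{prime} elements of the lattice $\mathcal{O}(G^{(0)})$, i.e. those proper open invariant $V$ such that $V\supseteq U_1\cap U_2$ forces $V\supseteq U_1$ or $V\supseteq U_2$. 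Since an ideal $P$ is prime exactly when $P\supseteq I\cap J \Rightarrow P\supseteq I$ or $P\supseteq J$, and $\Theta$ preserves and reflects inclusions and intersections (the latter because $U_{I\cap J}=U_I\cap U_J$ by Lemma~\ref{Lemma:IdealsGiveInvariantSubsets}), primeness translates verbatim across $\Theta$.

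The second step is to identify the prime elements of $\mathcal{O}(G^{(0)})$ with the image $i(\mathcal{Q}(G))$, i.e. with the open invariant sets of the form $G^{(0)}\setminus\overline{Gu}$. For one direction: if $V$ is prime and proper, its complement $D:=G^{(0)}\setminus V$ is a non-empty closed invariant set, and I claim $D=\overline{Gu}$ for some $u$. The key observation is that $D$ being prime (dually) means it is \emph{irreducible} as a closed invariant set — it is not the union of two strictly smaller closed invariant subsets. Given a point $u\in D$, the set $\overline{Gu}\subseteq D$ is closed invariant; if it were proper in $D$, one would try to write $D$ as a union of two proper closed invariant pieces, contradicting irreducibility. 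Here I would use that $\mathcal{Q}(G)$ is totally Baire (noted in the Remark preceding Lemma~\ref{Lemma:QO-space is a subspace of open-inv. subsets}): in a Baire space the whole space is not a countable union of proper closed subsets, and more to the point an irreducible closed subset has a dense orbit. Concretely, the closed invariant subsets of $D$ correspond to closed subsets of the quotient $\rho(D)\subseteq\mathcal{Q}(G)$, which is $T_0$ and Baire; irreducibility of $D$ forces $\rho(D)$ to have a generic point, i.e. a point $u$ with $\overline{\{\rho(u)\}}=\rho(D)$, which pulls back to $\overline{Gu}=D$. Conversely, each $\overline{Gu}$ is easily seen to give a prime open complement: if $G^{(0)}\setminus\overline{Gu}\supseteq V_1\cap V_2$ with $V_i$ open invariant and neither containing it, then $u\in$ both $G^{(0)}\setminus V_i$, hence $\overline{Gu}\subseteq(G^{(0)}\setminus V_1)\cup(G^{(0)}\setminus V_2)$... this last inclusion is not automatic, so I would instead argue via the quasi-orbit description: $G^{(0)}\setminus\overline{Gu}$ fails to contain $V_i$ iff $u\in\overline{V_i}$... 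I would phrase it cleanly using that $V_i$ open invariant and $u\notin V_i$ gives $\overline{Gu}\subseteq G^{(0)}\setminus V_i$ only when... — the correct statement is: $V\not\supseteq G^{(0)}\setminus\overline{Gu}$ iff $u\notin V$ (since $V$ invariant and open, $u\in V\Leftrightarrow Gu\subseteq V\Leftrightarrow\overline{Gu}\setminus V$ has empty interior; but as $V$ is open, $\overline{Gu}\subseteq G^{(0)}\setminus V$ when $u\notin V$ only if $Gu\cap V=\emptyset$, which holds by invariance). So $G^{(0)}\setminus\overline{Gu}$ prime reduces to: $u\notin V_1\cap V_2\Rightarrow u\notin V_1$ or $u\notin V_2$, which is trivial.

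Finally, I would assemble the homeomorphism as the composite
$$\mathrm{Prime}(C_r^*(G))\ \xrightarrow{\ \Theta\ }\ \{\text{prime elements of }\mathcal{O}(G^{(0)})\}\ =\ i(\mathcal{Q}(G))\ \xrightarrow{\ i^{-1}\ }\ \mathcal{Q}(G),$$
where $\Theta$ is a homeomorphism by Theorem~\ref{Thm:IdealStructure} (restricting to a homeomorphism on the subspace of prime ideals with the relative Fell/hull-kernel topology, which matches the Jacobson topology on $\mathrm{Prime}$), and $i$ is a homeomorphism onto its image by Lemma~\ref{Lemma:QO-space is a subspace of open-inv. subsets}. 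The main obstacle I anticipate is the middle identification — proving that lattice-theoretic primeness of an open invariant set is exactly equivalent to being the complement of a single orbit closure. This is where the Baire/totally-Baire property of $\mathcal{Q}(G)$ is essential: without it an irreducible closed invariant set need not contain a dense orbit, so one genuinely needs that topological input rather than a purely lattice-theoretic manipulation. A secondary technical point is checking that the relative topology on $\mathrm{Prime}(C_r^*(G))$ inherited from the Fell topology on $\mathcal{I}(C_r^*(G))$ coincides with the usual hull-kernel topology on the prime spectrum, which is standard but should be stated.
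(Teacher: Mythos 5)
Your proposal is correct and follows essentially the same route as the paper: transport primeness across the lattice isomorphism $\Theta$ of Theorem~\ref{Thm:IdealStructure}, verify that complements of orbit closures are prime via the elementary equivalence $V\subseteq G^{(0)}\setminus\overline{Gu}\Leftrightarrow u\notin V$ for open invariant $V$, and obtain the converse by Green's generic-point argument using that $\mathcal{Q}(G)$ is $T_0$ and totally Baire (the paper phrases this as: a prime ideal's complementary closed invariant set is not a union of two proper closed invariant subsets, hence its image in $\mathcal{Q}(G)$ is the closure of a single point). The only blemish is the momentary reversal of inclusions in your ``converse'' paragraph --- the relevant statement is $V\subseteq G^{(0)}\setminus\overline{Gu}$ iff $u\notin V$, not $V\not\supseteq G^{(0)}\setminus\overline{Gu}$ iff $u\notin V$ --- but you self-correct and arrive at the right trivial reduction.
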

\begin{proof}
Consider the map $\Xi:\mathcal{O}(G^{(0)})\rightarrow \mathcal{I}(C_r^*(G))$ defined in the discussion prior to Lemma \ref{Lem:Surjectivity}. In view of Lemma \ref{Lemma:QO-space is a subspace of open-inv. subsets} it restricts to a continuous map $\Psi:\mathcal{Q}(G)\rightarrow \mathcal{I}(C_r^*(G))$.
Let us show that the range of $\Psi$ is contained in $Prime(C_r^*(G))$.
For $u\in G^{(0)}$ write $I_u:=\Psi(\overline{G_u})$ for brevity.
We have to check that $I_u$ is a prime ideal. Let $I,J\in\mathcal{I}(C_r^*(G))$ such that $I\cap J\subseteq I_u$. If $U_I$ and $U_J$ denote the corresponding open $G$-invariant subsets of $G^{(0)}$ this implies $C_0(U_I)\cap C_0(U_J)\subseteq C_0(G^{(0)}\setminus \overline{Gu})$ and hence $U_I\cap U_J\subseteq G^{(0)}\setminus \overline{Gu}$. Thus $u\in G^{(0)}\setminus U_I\cup G^{(0)}\setminus U_J$. Say $u\in G^{(0)}\setminus U_I$. Then $\overline{Gu}\subseteq G^{(0)}\setminus U_I$ as $G^{(0)}\setminus U_I$ is closed and invariant. Hence $U_I\subseteq G^{(0)}\setminus \overline{Gu}$ which implies $I=Ideal_{C_r^*(G)}[C_0(U_I)]\subseteq I_u$.
Hence we can restrict the range and view $\Psi$ as a continuous map
$$\Psi:\mathcal{Q}(G)\rightarrow Prime(C_r^*(G)).$$
We claim that $\Psi$ is the desired homeomorphism.
Injectivity follows easily from Lemma \ref{Lem:Surjectivity}: If $I_u=I_v$ then $G^{(0)}\setminus \overline{Gu}=\Theta(I_u)=\Theta(I_v)=G^{(0)}\setminus \overline{Gv}$ and hence $\overline{Gu}=\overline{Gv}$ as claimed.\\
It remains to show surjectivity. Let $I$ be a prime ideal in $C_r^*(G)$ and $U_I$ be the corresponding invariant open subset of $G^{(0)}$. Let $\rho:G^{(0)}\rightarrow \mathcal{Q}(G)$ be the quotient map and let $F:=\rho(G^{(0)}\setminus U_I)$. We will show that $F$ is the closure of a single point. Since $\mathcal{Q}(G)$ is $T_0$ it will follow that $F$ is a single point and hence that $G^{(0)}\setminus U_I=\overline{Gu}$ for some $u\in G^{(0)}$.
Following Green's argument in \cite[Lemma, p.222]{G78} $F$ is the closure of a single point if and only if $F$ is not a union of two proper closed subsets. Suppose $C_1$ and $C_2$ are proper closed subsets of $F$ such that $F=C_1\cup C_2$. Since $G^{(0)}\setminus U_I$ is saturated, $A_i:=\rho^{-1}(C_i)$, $i=1,2$ is a closed $G$-invariant subset of $G^{(0)}\setminus U_I$ such that $A_1\cup A_2=G^{(0)}\setminus U_I$. If $I_i:=Ideal_{C_r^*(G)}[C_0(G^{(0)}\setminus A_i)]$, then this translates to $I_1\cap I_2=I$.
Now we use that $I$ is prime to obtain $I=I_1$ or $I=I_2$. This contradicts the assumption that $C_1,C_2$ are both proper subsets of $F$.\\
It is clear from what we have shown so far, that the inverse of $\Psi$ is just the restriction of the map $\Theta:\mathcal{I}(C_r^*(G))\rightarrow \mathcal{O}(G^{(0)})$ , hence it is continuous as well, completing the proof.
\end{proof}

If $G$ is second countable, then $\mathcal{Q}(G)$ is second countable since the canonical quotient map is open. Moreover, $C_r^*(G)$ is separable if $G$ is second countable. It is well known that for separable $C^*$-algebras an ideal is prime if and only if it is primitive. Thus the following corollary is an immediate consequence of Theorem \ref{Theorem:Prime Ideals}:
\begin{cor}
	Let $G$ be a second countable étale groupoid. If $G$ is inner exact and has the residual intersection property, then the space of primitive ideals $Prim(C_r^*(G))$ is homeomorphic to the quasi-orbit space $\mathcal{Q}(G)$.
\end{cor}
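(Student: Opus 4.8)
The plan is to reduce the statement entirely to Theorem~\ref{Theorem:Prime Ideals} by invoking the classical fact that prime and primitive ideals coincide for separable $C^*$-algebras. First I would verify that $C_r^*(G)$ is separable. Since $G$ is a second countable, locally compact, Hausdorff space, it is $\sigma$-compact and has a countable basis, which we may take to consist of relatively compact open bisections. Covering $G$ by countably many such sets and using a (countable) partition-of-unity argument, one produces a countable subset of $C_c(G)$ that is dense in the inductive limit topology, hence dense in $\|\cdot\|_r$; therefore $C_r^*(G)$, being the completion of $C_c(G)$, is separable.

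Next I would recall from the general theory of $C^*$-algebras (e.g.\ Dixmier) that in a separable $C^*$-algebra $A$ every prime ideal is primitive, so that $Prim(A)=Prime(A)$ as sets. Moreover the Jacobson (hull--kernel) topology on $Prim(A)$ coincides with the subspace topology it inherits from $\mathcal{I}(A)$ equipped with the Fell topology, which is exactly the topology placed on $Prime(C_r^*(G))$ in Theorem~\ref{Theorem:Prime Ideals}. Combining this identification with the homeomorphism $Prime(C_r^*(G))\cong\mathcal{Q}(G)$ furnished by Theorem~\ref{Theorem:Prime Ideals}, we conclude that $Prim(C_r^*(G))=Prime(C_r^*(G))$ is homeomorphic to $\mathcal{Q}(G)$.

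There is no substantive obstacle here; the only point requiring a modicum of care is checking that the topology on $Prim(C_r^*(G))$ used classically agrees with the Fell-subspace topology used in Theorem~\ref{Theorem:Prime Ideals}, but this is standard. (One may also note, although it is not needed for the statement, that $\mathcal{Q}(G)$ is then second countable, since the quotient map $\rho\colon G^{(0)}\to\mathcal{Q}(G)$ is open and $G^{(0)}$ is second countable.)
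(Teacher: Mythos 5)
Your argument is correct and follows the same route as the paper: it notes that second countability of $G$ makes $C_r^*(G)$ separable, invokes the standard fact that prime and primitive ideals coincide for separable $C^*$-algebras, and then applies Theorem~\ref{Theorem:Prime Ideals}. The extra care you take about separability of $C_c(G)$ and the agreement of the hull--kernel and Fell topologies is fine but not a departure from the paper's reasoning.
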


We end this section with some observations on the notions of inner exactness and the residual intersection property by example. The notion of inner exactness is still very much mysterious, since a characterization of it at the level of the groupoid is still lacking. However we can obtain a quite satisfying answer in the case of so called \textit{coarse groupoids}.

\subsection*{Groupoids arising from coarse geometry}
Let $(X,d)$ be a discrete metric space with bounded geometry, i.e. for every $r>0$, there exists an $n\in\mathbb{N}$ such that every set of diameter $\leq r$ contains at most $n$ elements. For every $R>0$, we consider the $R$-neighbourhood of the diagonal in $X\times X$: $\Delta_R:=\{(x,y)\in X\times X: d(x,y)\leq R\}$. Define
\begin{align*}
G(X):=\bigcup_{R>0}\overline{\Delta}_R\subseteq \beta(X\times X).
\end{align*}
It turns out that the domain, range, inversion and multiplication maps on the pair groupoid $X\times X$ have unique continuous extensions to $G(X)$. With respect to these extensions, $G(X)$ becomes a principal, étale, locally compact $\sigma$-compact Hausdorff topological groupoid with the unit space $\beta X$ (see \cite[Proposition~3.2]{STY02} or \cite[Theorem~10.20]{Roe03}). Since $G(X)^{(0)}=\beta X$ is totally disconnected, $G(X)$ is also ample. Moreover, the uniform Roe algebra $C_u^*(X)$ of $X$ is naturally isomorphic to the reduced groupoid $C^*$-algebra of $G(X)$ (see \cite[Proposition~10.29]{Roe03} for a proof).

In \cite{CW04} the authors establish a one-to-one correspondence between ideals in the uniform Roe algebra $C_u^*(X)$ in which controlled propagation operators are dense and the invariant open subsets of $G(X)^{(0)}=\beta X$. As a consequence, if the metric space $(X,d)$ has Yu's property $(A)$, then all the ideals in $C_u^*(X)$ are of this form and hence there is an isomorphism of lattices between the ideals in $C_u^*(X)$ and the open invariant subsets of $\beta X$. In particular, Yu's property $(A)$ implies that  $\ell^\infty(X)$ separates the ideals of $C_u^*(X)$ by Theorem~\ref{Thm:IdealStructure}. Using our characterization of ideals in reduced groupoid $C^*$-algebras we obtain that the following properties are equivalent: 
\begin{thm}\label{inner-exact property A}
Let $(X,d)$ be a discrete metric space with bounded geometry. Then the following are equivalent:
	\begin{enumerate}
		\item $G(X)$ is inner exact.
		\item $\ell^\infty(X)$ separates the ideals of $C_r^*(G(X))$.
		\item $X$ has property $(A)$.
		\item $G(X)$ is amenable.
	\end{enumerate}
\end{thm}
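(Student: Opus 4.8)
The plan is to close a cycle of implications $(4)\Rightarrow(3)\Rightarrow\dots$, using the known equivalences from the coarse-geometry literature together with Theorem~\ref{Thm:IdealStructure} from this paper. First I would record the cheap implications. The equivalence $(3)\Leftrightarrow(4)$ is classical: a bounded-geometry metric space $X$ has property $(A)$ if and only if its coarse groupoid $G(X)$ is (topologically) amenable; this is exactly \cite[Theorem~5.3]{STY02} (see also \cite[Theorem~10.29]{Roe03}), so I would simply cite it. For $(4)\Rightarrow(1)$ I would invoke the fact that amenable �tale groupoids are (inner) exact: if $G(X)$ is amenable, then so is the restriction $G(X)_D$ to any closed invariant $D\subseteq\beta X$ (amenability passes to closed subgroupoids), and for amenable groupoids the full and reduced $C^*$-algebras coincide, so the defining sequence $0\to C_r^*(G(X)_{\beta X\setminus D})\to C_r^*(G(X))\to C_r^*(G(X)_D)\to 0$ is exact because the corresponding sequence of full $C^*$-algebras is exact (fullness makes the sequence exact at the right by the universal property, and amenability of the quotient groupoid gives the needed nuclearity/exactness to conclude). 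Then $(1)\Rightarrow(2)$ is immediate from Theorem~\ref{Thm:IdealStructure} once we know $G(X)$ has the residual intersection property; but $G(X)$ is \emph{principal}, hence every $G(X)_D$ is principal, hence topologically principal, so Proposition~\ref{Prop:TopPrincipalAndNontrivialIntersections} gives the intersection property for every $G(X)_D$, i.e. the residual intersection property holds unconditionally. Therefore inner exactness alone is equivalent (via Theorem~\ref{Thm:IdealStructure}) to $\ell^\infty(X)$ separating the ideals of $C_u^*(X)\cong C_r^*(G(X))$, which is $(2)$.

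The substance of the theorem is the remaining implication $(2)\Rightarrow(3)$ (equivalently $(2)\Rightarrow(4)$): if $\ell^\infty(X)$ separates the ideals of the uniform Roe algebra, then $X$ has property~$(A)$. Here I would use the Chen--Wang correspondence \cite{CW04}: there is always a bijection between the open invariant subsets of $\beta X$ and those ideals $I\subseteq C_u^*(X)$ in which the controlled-propagation (equivalently, ``band-dominated'' or finite-propagation) operators are dense. Assuming $(2)$, \emph{every} closed two-sided ideal is of the form $\Xi(U)$ for $U$ open invariant, hence every closed two-sided ideal has dense finite-propagation part. One then applies the characterization (due to Brodzki--Niblo--Wright, or extractable from \cite{CW04}) that $X$ has property~$(A)$ precisely when $C_u^*(X)$ coincides with its ``algebraic'' finite-propagation completion in the sense that all ideals are controlled; more concretely, one uses the ghost ideal $I_{\mathrm{ghost}}\subseteq C_u^*(X)$ (the closure of the ghost operators): $X$ has property~$(A)$ if and only if $I_{\mathrm{ghost}}$ has no non-zero finite-propagation operators unless it is zero — and when $(2)$ holds, $I_{\mathrm{ghost}}=\Xi(U)$ for some open invariant $U$, forcing $U=\emptyset$ and hence (by the ghost-operator characterization of non-property-$(A)$ spaces, \cite[Theorem~1.3]{roe-willett}-type result) property~$(A)$.

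The main obstacle I expect is precisely making this last step airtight without importing heavy machinery: one must argue that $(2)$ rules out the existence of a non-zero ideal of ``non-controlled'' type, and the cleanest route is through ghost operators. The key facts to assemble are (i) a non-amenable bounded-geometry space always carries a non-zero ghost operator that is not a compact operator (Roe--Willett), and (ii) the ideal it generates is not of the form $\Xi(U)$ — indeed its intersection with $\ell^\infty(X)$ is zero (ghost operators have zero ``diagonal limit'' at every point of $\beta X$), so by Lemma~\ref{Lemma:IdealsGiveInvariantSubsets} it corresponds to $U=\emptyset$, yet the ideal is non-zero, contradicting injectivity of $\Theta$. Thus $(2)$ forces the absence of non-zero ghosts, which is property~$(A)$. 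I would present the argument in this contrapositive form — assume $X$ fails property~$(A)$, produce a non-zero ghost, observe its ideal witnesses failure of $(2)$ — as it is the shortest path and dovetails exactly with Lemma~\ref{Lemma:IdealsGiveInvariantSubsets}.
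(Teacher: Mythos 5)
Your overall route coincides with the paper's: you handle $(3)\Leftrightarrow(4)$, $(4)\Rightarrow(1)$ and $(1)\Rightarrow(2)$ exactly as in the text (principality of $G(X)$ gives the residual intersection property for free, so Corollary~\ref{essprin+inner exact} applies), and for $(2)\Rightarrow(3)$ the paper likewise reduces to showing that all ghost operators are compact and then invokes the Roe--Willett theorem \cite[Theorem~1.3]{MR3146831}. However, your execution of that last step contains a concrete error that would make the argument fail as written. You claim that the ideal generated by a non-compact ghost has zero intersection with $\ell^\infty(X)$, ``so it corresponds to $U=\emptyset$, yet the ideal is non-zero.'' This is false: the diagonal ghost operators are precisely the functions vanishing at infinity, so the ghost ideal $I$ satisfies $I\cap\ell^\infty(X)=c_0(X)\neq\{0\}$, and the corresponding open invariant subset is $U=X\subseteq\beta X$, not $\emptyset$. (Indeed $K(\ell^2(X))\subseteq I$ already, and $K(\ell^2(X))\cap\ell^\infty(X)=c_0(X)$.) So no contradiction can be derived from ``$U=\emptyset$'', and the step collapses.

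The fix is short and is exactly what the paper does: assuming $(2)$, Lemma~\ref{Lemma:CharacterizationsOfIdealSeparation} gives $I=Ideal_{C_r^*(G(X))}[I\cap\ell^\infty(X)]=Ideal_{C_r^*(G(X))}[c_0(X)]=K(\ell^2(X))$, i.e.\ every ghost operator is compact, and then \cite[Theorem~1.3]{MR3146831} yields property $(A)$. (Equivalently, in your contrapositive phrasing: a non-compact ghost makes $I$ and $K(\ell^2(X))=\Xi(X)$ two \emph{distinct} ideals with the same image $U=X$ under $\Theta$, contradicting injectivity.) With this correction your proof agrees with the paper's in every step.
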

\begin{proof}
$(1)\Rightarrow (2)$: Since $G(X)$ is a principal groupoid, it is automatically essentially principal. Hence, $(1)\Rightarrow (2)$ follows from Corollary~\ref{essprin+inner exact}.\\
$(2)\Rightarrow (3)$: It follows from Theorem~1.3 in \cite{MR3146831} that we only have to show that all ghost operators are compact. Let $I$ be the ideal consisting of all ghost operators in $C_u^*(X)\cong C_r^*(G(X))$, then we have $I=Ideal_{C_r^*(G(X))}[I\cap \ell^\infty(X)]$ by \cite[Theorem~6.4 and Remark~6.5]{CW04} or Lemma~\ref{Lemma:CharacterizationsOfIdealSeparation}. Since $I\cap \ell^\infty(X)\subseteq K(\ell^2(X))$, we conclude that all ghost operators are compact.\\
$(3)\Leftrightarrow (4)$: It follows from \cite[Theorem~5.3]{STY02}.\\
$(4)\Rightarrow (1)$: It follows easily from \cite[Proposition~5.1.1, Proposition~6.1.8 and Lemma~6.3.2]{MR1799683}.
\end{proof}
The above theorem provides a negative answer to the following conjecture by Sierakowski.
\begin{conjecture}{\cite[Conjecture~0.1]{Sierakowski10}}
Let $(A,\Gamma)$ be a $C^*$-dynamical system with $\Gamma$ discrete. If the action of $\Gamma$ on $\hat{A}$ is essentially free, then $A$ separates the ideals of $A\rtimes_r \Gamma$.
\end{conjecture}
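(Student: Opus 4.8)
The plan is not to prove this statement but to refute it, by extracting a concrete counterexample from Theorem~\ref{inner-exact property A}. Let $\Gamma$ be a finitely generated group which is \emph{not} exact; such groups exist by Gromov, with explicit constructions due to Osajda. Fix a word metric on $\Gamma$, so that $|\Gamma|$ is a discrete metric space of bounded geometry, and let $A:=\ell^\infty(\Gamma)$, viewed as a $\Gamma$-$C^*$-algebra via the left-translation action. Then $\widehat A=\beta\Gamma$, and the induced action $\Gamma\curvearrowright\beta\Gamma$ extends the left-translation action of $\Gamma$ on itself; a standard colouring argument shows that this action is \emph{free}. In particular it, and each of its restrictions to closed invariant subsets, is topologically free, so $\Gamma\curvearrowright\widehat A$ is essentially free and the hypothesis of the conjecture is met.

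Next I would invoke two classical identifications that place $A\rtimes_r\Gamma$ within the reach of Theorem~\ref{inner-exact property A}. First, $A\rtimes_r\Gamma=\ell^\infty(\Gamma)\rtimes_r\Gamma\cong C_u^*(|\Gamma|)$, the uniform Roe algebra (Brown--Ozawa), and $C_u^*(|\Gamma|)\cong C_r^*(G(|\Gamma|))$ as recalled in the discussion above (\cite[Proposition~10.29]{Roe03}); crucially, both isomorphisms send the distinguished abelian subalgebra $\ell^\infty(\Gamma)=C(\beta\Gamma)$ onto the canonical diagonal $C_0(G(|\Gamma|)^{(0)})$, so that the assertion ``$A$ separates the ideals of $A\rtimes_r\Gamma$'' is precisely the assertion that $C_0(G(|\Gamma|)^{(0)})$ separates the ideals of $C_r^*(G(|\Gamma|))$. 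Second, by the theorem of Guentner--Kaminker and Ozawa, $|\Gamma|$ has property (A) if and only if $\Gamma$ is exact; since $\Gamma$ is not exact, $|\Gamma|$ fails property (A).

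Combining these observations, I would apply Theorem~\ref{inner-exact property A} to $X=|\Gamma|$ to conclude that $\ell^\infty(\Gamma)$ does \emph{not} separate the ideals of $C_r^*(G(|\Gamma|))\cong A\rtimes_r\Gamma$ --- equivalently, via Theorem~\ref{Thm:IdealStructure}, that $G(|\Gamma|)$ is not inner exact. Hence $(A,\Gamma)$ is a $C^*$-dynamical system with $\Gamma$ discrete and with $\Gamma\curvearrowright\widehat A$ even free, for which nonetheless $A$ fails to separate the ideals of $A\rtimes_r\Gamma$; this contradicts the conjecture.

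The only point that needs real care is the bookkeeping in the second paragraph: one must verify that the three descriptions of this algebra --- $\ell^\infty(\Gamma)\rtimes_r\Gamma$, $C_u^*(|\Gamma|)$ and $C_r^*(G(|\Gamma|))$ --- all carry the \emph{same} distinguished copy of $\ell^\infty(\Gamma)=C(\beta\Gamma)$, since it is precisely this inclusion to which ``separates ideals'' refers. Everything else is available off the shelf: the existence of non-exact finitely generated groups, the crossed-product description of the uniform Roe algebra, and the coincidence of property (A) with exactness.
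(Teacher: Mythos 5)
Your refutation is correct and is essentially the argument the paper itself gives immediately after Theorem~\ref{inner-exact property A}: take a finitely generated non-exact group $\Gamma$ (Gromov, Osajda), use that the translation action on $\beta\Gamma$ is free, identify $\ell^\infty(\Gamma)\rtimes_r\Gamma\cong C_u^*(\Gamma)\cong C_r^*(G(\Gamma))$, and invoke the equivalence of ideal separation with property (A), hence with exactness of $\Gamma$. The only cosmetic difference is that the paper uses the right-translation action in the Brown--Ozawa identification with the uniform Roe algebra rather than the left, which changes nothing of substance.
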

It is well-known that for a discrete countable group $\Gamma$, the uniform Roe algebra $C^*_u(\Gamma)$ is $*$-isomorphic to the reduced crossed product $C^*$-algebra $\ell^\infty(\Gamma)\rtimes_r \Gamma$, where $\Gamma$ acts on $\ell^\infty(\Gamma)$ by right-translations (see e.g. \cite[Proposition 5.1.3]{BO08}). Moreover, the action of $\Gamma$ on $\beta \Gamma$ is always (essentially) free and $\ell^\infty(\Gamma)$ separates the ideals of $\ell^\infty(\Gamma)\rtimes_r \Gamma$ if and only if $\Gamma$ is an exact group by Theorem~\ref{inner-exact property A} (see also \cite{MR1739727}). On the other hand, there exist finitely generated
non-exact groups \cite{MR1978492, Osa}.\\
\\
A very different class of examples arises from higher rank graphs. In \cite{MR1745529} Kumjian and Pask introduce the notion of a $k$-graph $\Lambda$ and construct a $C^*$-algebra $C^*(\Lambda)$. They also associate an ample groupoid $G_\Lambda$ to the $k$-graph such that $C^*(G_\Lambda)\cong C^*(\Lambda)$. The groupoid $G_\Lambda$ is always amenable, hence inner exactness is automatic in that case. The groupoid $G_\Lambda$ has the residual intersection property if and only if $G_\Lambda$ is essentially principal by Proposition \ref{Prop:TopPrincipalAndNontrivialIntersections}. It is well-known that $G_\Lambda$ is essentially principal if and only if $\Lambda$ is strongly aperiodic (see \cite{MR3189779} for a definition of strong aperiodicity and \cite[Proposition~4.5]{MR1745529} for a proof). Hence, our Theorem \ref{Thm:IdealStructure} recovers \cite[Corollary~3.9]{PSS16}.

\section{Pure infiniteness and paradoxicality}
Let us recall the basic definitions of infiniteness of positive elements. For positive elements $a,b$ in a $C^*$-algebra $A$ we say that $a$ is \textit{Cuntz-below} $b$ and write $a\precsim b$ if there exists a sequence $(r_n)_n$ in $A$ such that $r_n^*br_n\rightarrow a$. We extend this to matrices over $A$ as follows: For $a\in M_n(A)^+$ and $b\in M_m(A)^+$ write $a\precsim b$ if there exists a sequence $(r_n)_n$ in $M_{m,n}(A)$ with $r_n^*br_n\rightarrow a$. For $a\in M_n(A)$ and $b\in M_m(A)$ we write $a\oplus b$ for the element $diag(a,b)\in M_{n+m}(A)$.

A positive element $a\in A$ is called \textit{infinite} if there exists a positive element $0\neq b\in A$ such that $a\oplus b\precsim a$. It is called \textit{properly infinite} if it is non-zero and $a\oplus a\precsim a$. This extends the usual concepts of infinite and properly infinite projections (see Section 3 in \cite{KR00}). A $C^*$-algebra $A$ is \textit{purely infinite} if every non-zero positive element of $A$ is properly infinite (see \cite[Theorem~4.16]{KR00}).

Using our results on the ideal structure of $C_r^*(G)$ we can generalize many results from \cite{BCS15} to the non-minimal setting. In particular, the following proposition extends \cite[Theorem~3.3]{BCS15} to the non-minimal setting and the idea of the proof comes from \cite[Proposition~2.1]{RS12}.

\begin{prop}
Let $G$ be an étale groupoid and $E:C_r^*(G)\rightarrow C_0(G^{(0)})$ be the canonical faithful conditional expectation. Suppose that $C_0(G^{(0)})$ separates the ideals of $C_r^*(G)$. Then $C_r^*(G)$ is purely infinite if and only if all non-zero positive elements in $C_0(G^{(0)})$ are properly infinite in $C_r^*(G)$ and $E(a)\precsim a$ for all positive elements $a$ in $C_r^*(G)$.
\end{prop}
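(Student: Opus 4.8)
The plan is to derive both directions from the ideal-theoretic identity
$$Ideal_{C_r^*(G)}[a]=Ideal_{C_r^*(G)}[E(a)]\qquad\text{for all }a\in C_r^*(G)_+,$$
which is available under our standing hypothesis: since $C_0(G^{(0)})$ separates the ideals of $C_r^*(G)$, Theorem~\ref{Thm:IdealStructure} shows that $G$ is inner exact and has the residual intersection property. Inner exactness gives the inclusion $a\in Ideal_{C_r^*(G)}[E(a)]$ for every $a\in C_r^*(G)_+$ by Lemma~\ref{Lemma:IdealInclusionAndInnerExactness}; and the residual intersection property, together with the fact that inner exactness guarantees $\pi(I)\cap C_0(D)=\{0\}$ for every ideal $I$ (Lemma~\ref{inner exact and intersection}), places us in situation $(3)$ of Proposition~\ref{Prop:IdealInclusionAndResidualIntersection}, which gives the reverse inclusion $E(a)\in Ideal_{C_r^*(G)}[a]$. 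Besides this I will only invoke the following standard fact about properly infinite positive elements (cf.\ \cite[Section~3]{KR00}): \emph{if $b$ is a properly infinite positive element of a $C^*$-algebra $A$ and $c\in A_+$ lies in the closed ideal of $A$ generated by $b$, then $c\precsim b$.} Indeed, $b\oplus b\precsim b$ forces $b\otimes 1_n\precsim b$ for all $n$, while $c\in\overline{AbA}$ forces $(c-\epsilon)_+\precsim b\otimes 1_n$ for a suitable $n=n(\epsilon)$; hence $(c-\epsilon)_+\precsim b$ for every $\epsilon>0$, and so $c\precsim b$.

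For the forward direction, suppose $C_r^*(G)$ is purely infinite. Then every non-zero positive element of $C_r^*(G)$ is properly infinite, so in particular every non-zero positive element of $C_0(G^{(0)})$ is properly infinite in $C_r^*(G)$. Moreover, given $a\in C_r^*(G)_+$ (which we may assume non-zero), $a$ itself is properly infinite and $E(a)\in Ideal_{C_r^*(G)}[a]$ by the identity above, so the quoted fact gives $E(a)\precsim a$.

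For the converse, assume every non-zero positive element of $C_0(G^{(0)})$ is properly infinite in $C_r^*(G)$ and that $E(a)\precsim a$ for all $a\in C_r^*(G)_+$; I must show that every non-zero $a\in C_r^*(G)_+$ is properly infinite. Faithfulness of $E$ gives $E(a)\neq0$, hence $E(a)$ is properly infinite by assumption. By the identity above $a\in Ideal_{C_r^*(G)}[E(a)]$, so the quoted fact with $b=E(a)$ gives $a\precsim E(a)$; combined with the assumed inequality $E(a)\precsim a$ this yields $a\sim E(a)$ in the Cuntz sense. Since $\precsim$ is compatible with direct sums, we obtain
$$a\oplus a\sim E(a)\oplus E(a)\precsim E(a)\sim a,$$
the middle comparison using that $E(a)$ is properly infinite. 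Hence $a\oplus a\precsim a$, and since $a\neq0$ the element $a$ is properly infinite; thus $C_r^*(G)$ is purely infinite.

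The real content sits in the ideal identity $Ideal_{C_r^*(G)}[a]=Ideal_{C_r^*(G)}[E(a)]$, which is the only place where inner exactness and the residual intersection property are genuinely used; everything else is bookkeeping with the elementary Cuntz-comparison properties of properly infinite positive elements. Accordingly, the point I would treat most carefully is verifying that the hypothesis that $C_0(G^{(0)})$ separates the ideals of $C_r^*(G)$ feeds, via Theorem~\ref{Thm:IdealStructure}, into Lemma~\ref{Lemma:IdealInclusionAndInnerExactness} and Proposition~\ref{Prop:IdealInclusionAndResidualIntersection} with the correct logical directions.
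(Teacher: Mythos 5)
Your proof is correct and follows essentially the same route as the paper: both directions rest on the identity $Ideal_{C_r^*(G)}[a]=Ideal_{C_r^*(G)}[E(a)]$ obtained from Theorem~\ref{Thm:IdealStructure} together with Lemma~\ref{Lemma:IdealInclusionAndInnerExactness} and Proposition~\ref{Prop:IdealInclusionAndResidualIntersection}, combined with the standard fact from \cite[Proposition~3.5~(ii)]{KR00} and the same Cuntz-comparison chain $a\oplus a\precsim E(a)\oplus E(a)\precsim E(a)\precsim a$. No gaps.
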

\begin{proof}
If $C_r^*(G)$ is purely infinite, then every non-zero positive element in $C_r^*(G)$ is properly infinite by \cite[Theorem~4.16]{KR00}. Let $a$ be any non-zero positive element in $C^*_r(G)$. It follows from Theorem~\ref{Thm:IdealStructure}, Proposition~\ref{Prop:IdealInclusionAndResidualIntersection} and Lemma~\ref{inner exact and intersection} that $E(a)$ belongs to the ideal in $C_r^*(G)$ generated by the properly infinite positive element $a$. Hence, we have $E(a)\precsim a$ by \cite[Proposition~3.5 (ii)]{KR00}.

Conversely, it suffices to show that every non-zero positive element $a$ in $C_r^*(G)$ is properly infinite by \cite[Theorem~4.16]{KR00}. From Theorem~\ref{Thm:IdealStructure} and Lemma~\ref{Lemma:IdealInclusionAndInnerExactness} we have that $a$ belongs to the ideal in $C_r^*(G)$ generated by $E(a)$. Since $E(a)$ is a non-zero positive element in $C_0(G^{(0)})$, it is properly infinite in $C_r^*(G)$ by the assumption. Moreover, \cite[Proposition~3.5 (ii)]{KR00} implies that $a\precsim E(a)$. We conclude that $a$ is properly infinite by the following easy computation
$$
a\oplus a \precsim E(a) \oplus E(a)  \precsim E(a)  \precsim a.
$$
Thus, $C_r^*(G)$ is purely infinite.
\end{proof}

 Note that thanks to Corollary \ref{Cor:IdealSeparationImplies(IP)}, the assumptions of the following theorem imply that $C_r^*(G)$ has the ideal property (IP). Consequently, $C_r^*(G)$ is purely infinite if and only if it is strongly purely infinite if and only if it is weakly purely infinite by \cite[Proposition 2.14]{PR07}. 
\begin{thm}\label{Thm:CharacterizationPurelyInfinite}
	Suppose $G$ is an ample groupoid, such that $G$ is inner exact and essentially principal. Then the following are equivalent:
	\begin{enumerate}
		\item $C_r^*(G)$ is (strongly) purely infinite.
		\item Every non-zero projection $p\in C_0(G^{(0)})$ is properly infinite in $C_r^*(G)$.
		\item Every non-zero projection in $C_0(D)$ is infinite in $C_r^*(G_D)$ for every closed $G$-invariant subset $D\subseteq G^{(0)}$.
	\end{enumerate}
\end{thm}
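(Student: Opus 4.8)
The plan is to prove the cycle of implications $(1)\Rightarrow(2)\Rightarrow(3)\Rightarrow(1)$, exploiting throughout that the hypotheses (inner exact, essentially principal) guarantee via Corollary~\ref{essprin+inner exact} and Theorem~\ref{Thm:IdealStructure} that $C_0(G^{(0)})$ separates the ideals of $C_r^*(G)$, and hence (Corollary~\ref{Cor:IdealSeparationImplies(IP)}) that $C_r^*(G)$ has the ideal property; in particular, pure infiniteness, strong pure infiniteness and weak pure infiniteness all coincide here by \cite[Proposition~2.14]{PR07}, which justifies the parenthetical ``(strongly)''. The implication $(1)\Rightarrow(2)$ is immediate: if $C_r^*(G)$ is purely infinite then by \cite[Theorem~4.16]{KR00} \emph{every} non-zero positive element is properly infinite, in particular every non-zero projection coming from $C_0(G^{(0)})$.

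For $(2)\Rightarrow(3)$, fix a closed $G$-invariant subset $D\subseteq G^{(0)}$ with quotient map $\pi\colon C_r^*(G)\to C_r^*(G_D)$ and let $p\in C_0(D)$ be a non-zero projection. Since $G$ is ample, $D$ is totally disconnected and $p$ is the characteristic function of a non-empty compact open subset of $D$; using that $G^{(0)}$ is totally disconnected we may lift $p$ to a projection $q=\chi_K\in C_0(G^{(0)})$ with $K$ compact open in $G^{(0)}$ and $\pi(q)=p$ (concretely, extend the compact open subset of $D$ to a compact open subset of $G^{(0)}$ by intersecting a compact open neighbourhood in $G^{(0)}$ with the preimage, shrinking if necessary). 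By hypothesis $q$ is properly infinite in $C_r^*(G)$, hence $q\oplus q\precsim q$ in $C_r^*(G)$; applying the $*$-homomorphism $\pi$ (which preserves Cuntz subequivalence) gives $p\oplus p\precsim p$ in $C_r^*(G_D)$, so $p$ is properly infinite, in particular infinite, in $C_r^*(G_D)$. (Here one should note that $G_D$ is again an ample, inner exact, essentially principal groupoid, so the quotient $C_r^*(G_D)=C_r^*(G)/C_r^*(G_{G^{(0)}\setminus D})$ is itself of the type under consideration; this is not strictly needed for this implication but is used in the next one.)

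The substantive implication is $(3)\Rightarrow(1)$, and this is where the main work lies. By the criterion of \cite[Proposition~4.7]{KR00} (equivalently \cite[Proposition~2.11]{PR07}, available because of the ideal property), it suffices to show that every non-zero hereditary sub-$C^*$-algebra of every quotient of $C_r^*(G)$ contains an infinite projection. Every quotient of $C_r^*(G)$ is, by Theorem~\ref{Thm:IdealStructure}, of the form $C_r^*(G_D)$ for some closed $G$-invariant $D\subseteq G^{(0)}$, and $G_D$ inherits amplenness, inner exactness and essential principality; so it is enough to show that every non-zero hereditary sub-$C^*$-algebra $B\subseteq C_r^*(G_D)$ contains an infinite projection, given that every non-zero projection in $C_0(D)$ is infinite in $C_r^*(G_D)$. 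Replacing $G$ by $G_D$, we reduce to: if every non-zero projection in $C_0(G^{(0)})$ is infinite in $C_r^*(G)$, then every non-zero hereditary sub-$C^*$-algebra of $C_r^*(G)$ contains an infinite projection. To produce the projection, pick a non-zero positive $a\in B$; the point is to locate a non-zero projection from $C_0(G^{(0)})$ that is Cuntz-subequivalent to $a$, so that it is equivalent to a projection inside $B$ (a projection that is Cuntz-below a positive element of a hereditary subalgebra is Murray--von Neumann equivalent to a projection in that subalgebra, by standard Cuntz-comparison facts; see Section~3 of \cite{KR00}). This in turn rests on a local ``compression'' argument in ample groupoid $C^*$-algebras: given $0\neq a\in C_r^*(G)_+$, since $E$ is faithful $E(a)\neq 0$, and after a small perturbation and cutting down by an element supported in a compact open bisection one can trap a scalar multiple of some $\chi_K$ ($K$ compact open in $G^{(0)}$) Cuntz-below $a$ — this is exactly the kind of estimate used in \cite[Theorem~3.3]{BCS15} and \cite[Proposition~2.1]{RS12}, and the ample hypothesis is what makes the relevant cutdowns \emph{projections}. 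The infiniteness of $\chi_K$ then transfers to an infinite projection inside $B$, completing the argument. The main obstacle is precisely this last compression step: carefully choosing the bisection and the perturbation so that the resulting compressed element dominates an honest characteristic-function projection of a compact open subset of $G^{(0)}$, uniformly enough to invoke Cuntz comparison; everything else is bookkeeping with the correspondence of Theorem~\ref{Thm:IdealStructure} and functoriality of $\precsim$ under quotients.
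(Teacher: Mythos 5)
Your proposal is correct and follows essentially the same route as the paper: $(1)\Rightarrow(2)$ is immediate, $(2)\Rightarrow(3)$ lifts the projection to the characteristic function of a compact open subset of $G^{(0)}$ and pushes proper infiniteness through the quotient map, and $(3)\Rightarrow(1)$ invokes \cite[Proposition~4.7]{KR00}, identifies the quotients as $C_r^*(G_D)$, finds a non-zero positive $h\in C_0(D)$ Cuntz-below the given element of the hereditary subalgebra, extracts a projection from $\overline{hC_0(D)h}$ using total disconnectedness, and transfers it into $B$. The only difference is that the compression step you describe as ``the main obstacle'' is simply quoted in the paper as \cite[Lemma~3.2]{BCS15}, so you may cite that lemma directly rather than re-deriving the cutdown.
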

\begin{proof}
	$(1)\Rightarrow (2)$: If $C_r^*(G)$ is purely infinite, then every non-zero projection in $C_r^*(G)$ is properly infinite.\\
	
	$(2)\Rightarrow (3)$: Let $D\subseteq G^{(0)}$ be a closed $G$-invariant subset and $0\neq p\in C_0(D)$ be a projection. Then there is a compact open set $A\subseteq D$ such that $p=1_A$. Consequently $A=D\cap B$ for some open set $B$ in $G^{(0)}$. Since $G^{(0)}$ is totally disconnected, we can assume that $B$ is also compact (if it is not, write $B=\bigcup_i B_i$ with $B_i$ compact open. As $A$ is compact, we can reindex to write $A\subseteq \bigcup_{i=1}^n B_i$. Then $A=D\cap \bigcup_{i=1}^n B_i$, where $\bigcup_{i=1}^n B_i$ is compact open). Consequently $1_B$ is a non-zero projection in $C_0(G^{(0)})$ and hence properly infinite in $C_r^*(G)$.
	Let $\pi:C_r^*(G)\rightarrow C_r^*(G_D)$ be the quotient map. Then $\pi(1_B)=1_A$ and hence $1_A$ is properly infinite.\\
	
	$(3)\Rightarrow (1)$: By \cite[Proposition~4.7]{KR00} it is enough to show that every non-zero hereditary sub-$C^*$-algebra in every quotient of $C_r^*(G)$ contains an infinite projection. So let $I\subseteq C_r^*(G)$ be an ideal. Let $U\subseteq G^{(0)}$ be the open $G$-invariant set corresponding to the ideal $I\cap C_0(G^{(0)})$ and $D=G^{(0)}\setminus U$.
	By Corollary~\ref{essprin+inner exact} $C_0(G^{(0)})$ separates the ideals of $C_r^*(G)$ and hence we have $C_r^*(G)/I=C_r^*(G_D)$. Now let $B\subseteq C_r^*(G_D)$ be a non-zero hereditary sub-$C^*$-algebra and $0\neq b\in B$ a positive element. By \cite[Lemma 3.2]{BCS15} there exists a positive element $0\neq h\in C_0(D)$ such that $h\precsim b$ in $C_r^*(G_D)$. The hereditary sub-$C^*$-algebra $\overline{hC_0(D)h}$ contains a projection $p$ since $G^{(0)}$ is totally disconnected. Applying $(3)$ now, we see that $p$ is infinite in $C_r^*(G_D)$. By \cite[Proposition~2.7 (i)]{KR00} we get $p\precsim h\precsim b$. Since $p$ is a projection we can find $x\in C_r^*(G_D)$ such that $p=x^*bx$ (cf. \cite[Proposition~2.6]{KR00}). If we set $z:=b^{\frac{1}{2}}x$, then $z^*z=p$ and $q:=zz^*=b^{\frac{1}{2}}xx^*b^{\frac{1}{2}}$ is a projection in $B$, which is equivalent to $p$, hence also infinite.
\end{proof}

We can also use our findings to slightly improve the results in \cite{BCS15} concerning strong pure infinitness to étale groupoids without the weak containment property. For a definition of a filling family and the matrix diagonalization property we refer the reader to \cite{KS15}.
\begin{prop}\label{Proposition:Filling family}
	Let $G$ be an étale groupoid. If $G$ is inner exact and essentially principal, then $C_0(G^{(0)})^+$ is a filling family for $C_r^*(G)$ in the sense of \cite{KS15}.
\end{prop}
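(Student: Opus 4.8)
The plan is to check, condition by condition, the defining properties of a filling family from \cite{KS15}, the two main ingredients being the canonical faithful conditional expectation $E\colon C_r^*(G)\to C_0(G^{(0)})$ and our description of the ideal structure. Under the present hypotheses, Corollary~\ref{essprin+inner exact} (equivalently Theorem~\ref{Thm:IdealStructure}) shows that $C_0(G^{(0)})$ separates the ideals of $C_r^*(G)$, so that by Lemma~\ref{Lemma:RestrictedGroupoidIsIdeal} every ideal of $C_r^*(G)$ is of the form $C_r^*(G_U)$ with $U\subseteq G^{(0)}$ open invariant, and every quotient is of the form $C_r^*(G_D)$ with $D\subseteq G^{(0)}$ closed invariant, the quotient map restricting to the surjection $C_0(G^{(0)})\to C_0(D)$, $h\mapsto h|_D$. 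Combining Lemma~\ref{Lemma:IdealInclusionAndInnerExactness} with Proposition~\ref{Prop:IdealInclusionAndResidualIntersection}, one moreover has $Ideal_{C_r^*(G)}[a]=Ideal_{C_r^*(G)}[E(a)]$ for every $a\in C_r^*(G)_+$; I write $Ideal_{C_r^*(G)}[a]=C_r^*(G_{U_a})$ for the attached open invariant set $U_a$.

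The technical core is a ``diagonal domination'' step that needs no hypothesis on $G$: given $a\in C_r^*(G)_+$ and $\epsilon>0$, I would choose $b=\sum_{i=1}^{N}c_i\in C_c(G)$ with $\|a^{1/2}-b\|$ small enough that $\|a-b^*b\|<\epsilon$, where a partition of unity subordinate to a finite cover of $\supp(b)$ by open bisections arranges that each $c_i$ is supported in an open bisection. Then each $c_i^*c_i$ is supported in $G^{(0)}$, so $f:=N\sum_{i=1}^{N}c_i^*c_i$ lies in $C_0(G^{(0)})^+$, and from $b^*b=(\sum_i c_i)^*(\sum_i c_i)\le N\sum_i c_i^*c_i=f$ together with R{\o}rdam's lemma we get $(a-\epsilon)_+\precsim b^*b\precsim f$. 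Applying this construction inside the sub-$C^*$-algebra $Ideal_{C_r^*(G)}[a]=C_r^*(G_{U_a})$ — itself the reduced $C^*$-algebra of the étale groupoid $G_{U_a}$, and already containing $(a-\epsilon)_+$ — one may in addition take $f\in C_0(U_a)^+\subseteq C_0(G^{(0)})^+$. Thus the diagonal element $f$ both Cuntz-dominates $(a-\epsilon)_+$ and lies in $Ideal_{C_r^*(G)}[a]$, and letting $\epsilon\downarrow 0$ these $f$'s generate $Ideal_{C_r^*(G)}[a]$; this is exactly the filling requirement for the element $a$.

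For the residual part of the definition I would observe that $C_0(G^{(0)})$ separating the ideals of $C_r^*(G)$ forces $C_0(D)$ to separate the ideals of each quotient $C_r^*(G_D)$ — the ideals of $C_r^*(G_D)$ are the images of the $C_r^*(G_{G^{(0)}\setminus C})$ with $C\subseteq D$ closed invariant, each generated by the restriction $C_0(D\setminus C)$, which is its intersection with $C_0(D)$ — so the diagonal-domination step applies verbatim to the étale groupoid $G_D$, whose diagonal $C_0(D)$ is the image of $C_0(G^{(0)})^+$ under the quotient map. The remaining, purely algebraic, closure properties of $\mathcal F=C_0(G^{(0)})^+$ (stability under finite sums, under $f\mapsto(f-\delta)_+$, under continuous functional calculus, directedness) hold because $\mathcal F$ is the positive cone of the commutative $C^*$-subalgebra $C_0(G^{(0)})$. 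I expect the main obstacle to be bookkeeping: matching the construction above to the exact, possibly multi-element, formulation of the filling-family axioms in \cite{KS15}, and in particular verifying that for a given $a$ the diagonal element can be chosen inside $Ideal_{C_r^*(G)}[a]$ — precisely the point where inner exactness and essential principality enter, through the identity $Ideal_{C_r^*(G)}[a]=Ideal_{C_r^*(G)}[E(a)]$ and the resulting identification of ideals and quotients with $C_r^*(G_U)$ and $C_r^*(G_D)$.
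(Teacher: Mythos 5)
Your argument establishes the wrong comparison. The filling-family condition of \cite{KS15}, as it is used in \cite[Proposition~6.3]{BCS15}, requires the family to reach \emph{into} every hereditary sub-$C^*$-algebra of every quotient: for each closed ideal $J$ and each hereditary sub-$C^*$-algebra $D$ with $D\not\subseteq J$ one must produce a non-zero element of $C_0(G^{(0)})^+$ sitting (modulo $J$, in the appropriate Cuntz/hereditary sense) \emph{below} the elements of $D$. Your ``technical core'' proves the opposite domination $(a-\epsilon)_+\precsim f$ with $f\in C_0(G^{(0)})^+$; as you yourself observe, that inequality holds for every \'etale groupoid with no hypothesis at all, so it cannot be the content of the proposition. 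The genuinely non-trivial step --- given a non-zero positive $b$ in a quotient $C_r^*(G_D)$, find $0\neq h\in C_0(D)^+$ with $h\precsim b$ --- is \cite[Lemma~3.2]{BCS15}, and it is exactly there that topological principality of each $G_D$, i.e.\ essential principality of $G$, is used. Your proof never invokes this lemma and brings in essential principality only through the ideal-separation results (via $Ideal_{C_r^*(G)}[a]=Ideal_{C_r^*(G)}[E(a)]$), which is not where it is needed. Consequently the admitted uncertainty about ``matching the construction to the exact formulation of the filling-family axioms'' is not bookkeeping: the construction you propose does not verify those axioms.

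For comparison, the paper's own proof is a one-line reduction: by Corollary~\ref{essprin+inner exact} the hypotheses imply that $C_0(G^{(0)})$ separates the ideals of $C_r^*(G)$, and with that input the proof of \cite[Proposition~6.3]{BCS15} --- which combines ideal separation with \cite[Lemma~3.2]{BCS15} applied in each quotient $C_r^*(G_D)$ --- goes through verbatim. Your correct observations (every ideal, resp.\ quotient, is of the form $C_r^*(G_U)$, resp.\ $C_r^*(G_D)$, with the quotient map restricting to $C_0(G^{(0)})\to C_0(D)$) are consistent with that reduction, but they do not substitute for the cutting-down-to-the-diagonal step, which is the actual mathematical content of the statement.
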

\begin{proof}
	It follows from the assumptions and Corollary \ref{essprin+inner exact} that $C_0(G^{(0)})$ separates the ideals in $C_r^*(G)$ and hence the same proof as in \cite[Proposition 6.3]{BCS15} works.
\end{proof}

Using this, we obtain the following:
\begin{cor}
	Let $G$ be an étale, inner exact and essentially principal groupoid. Then the following are equivalent:
	\begin{enumerate}
		\item $C_r^*(G)$ is strongly purely infinite.
		\item Every pair $(f,g)\in C_0(G^{(0)})^+\times C_0(G^{(0)})^+$ has the matrix diagonalization property.
	\end{enumerate}
\end{cor}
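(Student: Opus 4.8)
The plan is to leverage Proposition~\ref{Proposition:Filling family}, which tells us that under the stated hypotheses $C_0(G^{(0)})^+$ is a filling family for $C_r^*(G)$, and then quote the general characterization of strong pure infiniteness in terms of filling families and the matrix diagonalization property from \cite{KS15}. More precisely, the main result of \cite{KS15} asserts that if $\mathcal{F}$ is a filling family for a $C^*$-algebra $A$, then $A$ is strongly purely infinite if and only if every pair of elements in $\mathcal{F}$ has the matrix diagonalization property. So the proof essentially consists of verifying that the hypotheses of that theorem are met and then invoking it.

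First I would recall (or cite) the precise statement from \cite{KS15}: a filling family $\mathcal{F}\subseteq A^+$ has the property that strong pure infiniteness of $A$ is equivalent to the matrix diagonalization property holding for all pairs $(a,b)\in\mathcal{F}\times\mathcal{F}$. Then, by Proposition~\ref{Proposition:Filling family}, the set $\mathcal{F}=C_0(G^{(0)})^+$ is such a filling family for $A=C_r^*(G)$, because $G$ is assumed inner exact and essentially principal. Combining these two facts immediately yields the equivalence of (1) and (2). The implication $(1)\Rightarrow(2)$ is the ``easy'' direction in the sense that strong pure infiniteness forces the matrix diagonalization property for \emph{all} pairs in $A^+$, in particular for pairs in $C_0(G^{(0)})^+$; the implication $(2)\Rightarrow(1)$ is where the filling family hypothesis does the real work, allowing one to bootstrap from the special subfamily to all of $A$.

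Concretely, the proof I would write is short: ``By Proposition~\ref{Proposition:Filling family}, $C_0(G^{(0)})^+$ is a filling family for $C_r^*(G)$. Hence the claim is an immediate consequence of \cite[Theorem~...]{KS15}, which states that for any filling family $\mathcal{F}$ the $C^*$-algebra is strongly purely infinite if and only if every pair in $\mathcal{F}$ has the matrix diagonalization property.'' If desired, I would spell out the two directions separately for clarity, but no new computation is needed.

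The main (and essentially only) obstacle is bookkeeping: one must make sure that the definition of ``filling family'' and of ``matrix diagonalization property'' used here matches exactly the one in \cite{KS15}, and that the cited theorem from \cite{KS15} is stated in the generality needed (for non-separable or non-unital $C^*$-algebras, if applicable). Since Proposition~\ref{Proposition:Filling family} has already been established with the same terminology, and since $C_r^*(G)$ for an \'etale groupoid is naturally the kind of algebra covered by \cite{KS15}, this is a routine matter of matching conventions rather than a genuine mathematical difficulty. No new groupoid-theoretic input is required beyond what Proposition~\ref{Proposition:Filling family} already supplies.
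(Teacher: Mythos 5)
Your proposal is correct and follows essentially the same route as the paper: the paper also deduces $(2)\Rightarrow(1)$ by combining Proposition~\ref{Proposition:Filling family} with \cite[Theorem 1.1]{KS15}, and handles $(1)\Rightarrow(2)$ by the general fact that strong pure infiniteness forces matrix diagonalization for all pairs (citing \cite[Lemma~5.8]{KR02} for that direction). The only cosmetic difference is that you package both directions into a single ``if and only if'' quotation of \cite{KS15}, while the paper splits the two implications across the two references.
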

\begin{proof}
	$(1)\Rightarrow (2)$: This is a direct consequence of \cite[Lemma~5.8]{KR02}.
	
	$(2)\Rightarrow (1)$: This follows from combining Proposition \ref{Proposition:Filling family} with \cite[Theorem 1.1]{KS15}.
\end{proof}

Let us return to the totally disconnected setting.
In view of Theorem \ref{Thm:CharacterizationPurelyInfinite} it would be desirable to have a condition at the level of the (ample) groupoid $G$ ensuring that a given non-zero projection $p\in C_0(G^{(0)})$ is properly infinite in $C_r^*(G)$. As $G^{(0)}$ is totally disconnected, every such projection is just the characteristic function of a compact open subset of $G^{(0)}$.
In the classical setting of (partial) actions of discrete groups on totally disconnected spaces such a condition is given by the notion of paradoxical decompositions (see \cite{MR3144248,MR2974211,RS12}).
In the following we generalize this notion to the setting of étale groupoids.

\begin{defi}\label{DefParadoxicalDecomposition}
	Let $G$ be an étale groupoid and $\mathbb{E}$ a family of bisections of $G$. For $k>l>0$ a non-empty subset $A\subseteq G^{(0)}$ is called $(\mathbb{E},k,l)$-paradoxical, if for each $1\leq i\leq k$ there exists a number $n_i\in\N$ and bisections $V_{i,1},\ldots,V_{i,n_i}$ in $\mathbb{E}$ and $m_{i,1},\ldots ,m_{i,n_i}\in\lbrace1,\ldots l\rbrace$, such that:
	\begin{enumerate}
		\item $\bigcup\limits_{j=1}^{n_i} d(V_{i,j})=A$ for all $1\leq i\leq k$, and
		\item the sets $r(V_{i,j})\times \lbrace m_{i,j}\rbrace\subseteq A\times \lbrace 1,\ldots, l\rbrace$ are pairwise disjoint.
	\end{enumerate}
	We call $A$ \textit{completely $\mathbb{E}$-non-paradoxical} if $A$ fails to be $(\mathbb{E},k,l)$-paradoxical for all integers $k>l>0$.
\end{defi}
Some canonical choices for $\mathbb{E}$ could be the set $G^{op}$ of all open bisections or the set of all Borel bisections. The best results however can be achieved in a totally disconnected setting, i.e. if $G$ is ample. In that case we will exclusively use the set $G^a$ of all compact open bisections.

\begin{rem}\label{Remark:paradoxical} Let us make a few easy observations:
\begin{enumerate}
	\item It is not hard to see that if $A\subseteq G^{(0)}$ is $(\mathbb{E},k,l)$-paradoxical for $k>l>0$, then $A$ is $(\mathbb{E},k',l')$-paradoxical for all $k\geq k'>l'\geq l$.
	\item Let $G$ be an ample groupoid. Given a $(G^a,k,l)$-paradoxical decomposition as in the definition, we can always make sure that for each fixed $1\leq i\leq k$ the union $\bigcup_{j=1}^{n_i} d(V_{i,j})$ is disjoint, since set differences of compact open sets are compact open.
	\item It will follow from Remark \ref{Remark:Comparison with old definition of type sgrp} in conjunction with Lemma \ref{Lem:ParadoxicalityInTermsOfTypeSemigrp} that our definition of paradoxical decomposition coincides with the definitions given in \cite{MR3144248,MR2974211,RS12} for the special case of (partial) group actions of discrete groups on totally disconnected spaces.
\end{enumerate}
\end{rem}

\begin{exam}[The Cuntz Groupoid $G_n$] Let us look at a well-known and simple example to illustrate paradoxicality. The definition of the Cuntz groupoid is due to Renault (see \cite{Renault80}), but we will follow the exposition given in \cite{Paterson99}. Fix $n\geq 2$ and consider the set $X=\lbrace 1,\ldots n\rbrace^\N$  of sequences with integer values between $1$ and $n$ equipped with the product topology. This is a totally disconnected compact Hausdorff space. The Cuntz groupoid $G_n$ consists of triples $$(\alpha\gamma,\ell(\alpha)-\ell(\beta),\beta\gamma)\in X\times \Z\times X,$$
where
\begin{enumerate}
	\item $\gamma\in X$,
	\item $\alpha$ and $\beta$ are finite strings with values in $\lbrace 1,\ldots n\rbrace$, and
	\item $\ell(\alpha)$ and $\ell(\beta)$ denote the length of $\alpha$ and $\beta$ respectively.
\end{enumerate}
Define multiplication and inversion on $G_n$ by
$$(\gamma,k,\gamma')(\gamma',k',\gamma'')=(\gamma,k+k',\gamma''),\text{ and } (\gamma,k,\gamma')^{-1}=(\gamma',-k,\gamma).$$
One readily checks that this turns $G_n$ into a groupoid. We can identify $X$ with $(G_n)^{(0)}$ via $\gamma\mapsto (\gamma,0,\gamma)$ and under this identification the domain and range maps are given by $d(\gamma,k,\gamma')=\gamma'$ and $r(\gamma,k,\gamma')=\gamma$. We topologize $G_n$ as follows: For $V\subseteq X$ open and $\alpha,\beta$ finite strings let $$U_{\alpha,\beta,V}=\lbrace (\alpha\gamma,\ell(\alpha)-\ell(\beta),\beta\gamma)\mid \gamma\in V\rbrace.$$
We define a topology on $G_n$ by declaring the sets $U_{\alpha,\beta,V}$ to be a basis for the topology. It is then straightforward to check that $G_n$ is an ample groupoid and $U_{\alpha,\beta,V}\in G_n^a$. Note that this topology restricts to the product topology on $X=G_n^{(0)}$ but is different from the topology inherited from the product topology on $X\times\Z\times X$. If we set $U_{\alpha,\beta}=U_{\alpha,\beta,X}$ then actually the $U_{\alpha,\beta}$ are enough to form a basis for the topology of $G_n$ since $U_{\alpha,\beta,V}=(\alpha V)U_{\alpha,\beta}$ and the open sets $\alpha X$ form a basis for the topology of $X$.\\
We claim that every compact open subset of the form $\alpha X=U_{\alpha,\alpha}$ is $(G^a,2,1)$-paradoxical.
For $1\leq i\leq n$ let $\beta_i$ be the finite string $\alpha i$. Then $d(U_{\beta_i,\alpha})=\alpha X$ and $r(U_{\beta_i,\alpha})\subseteq \alpha X$ for all $1\leq i\leq n$. Moreover the sets $r(U_{\beta_i,\alpha})$ are pairwise disjoint whence the claim follows. Note that in this situation we observe an even stronger form of paradoxicality since we actually have $\bigsqcup_{i=1}^n r(U_{\beta_i,\alpha})=\alpha X$.
\end{exam}

\subsection*{Pseudogroups and metric spaces}	
Let $X$ be a totally disconnected compact Hausdorff space and $\mathcal{G}$ be a pseudogroup on $X$ in the sense of \cite[Definition 1.]{MR1721355} consisting of partial homeomorphisms $\varphi:dom(\varphi)\rightarrow ran(\varphi)$, where $dom(\varphi)$ and $ran(\varphi)$ are clopen subsets of $X$. We will assume that $X=\bigcup_{\varphi\in\mathcal{G}}dom(\varphi)$. Given such a pseudogroup, Skandalis, Tu and Yu in \cite{STY02} construct an ample groupoid $G(\mathcal{G})$ with unit space $X$.
A basis for the topology is given by a family $(U_\varphi)_{\varphi\in\mathcal{G}}$ of compact open bisections of $G(\mathcal{G})$ such that $d(U_\varphi)=dom(\varphi)$, $r(U_\varphi)=ran(\varphi)$ and the canonical map $\alpha_{U_\varphi}:d(U_\varphi)\rightarrow r(U_\varphi)$ coincides with $\varphi$.
We say that $X$ is \textit{weakly $\mathcal{G}$-paradoxical} if there exist $\varphi_1,\varphi_2\in\mathcal{G}$ with $dom(\varphi_1)=X=dom(\varphi_2)$ and $ran(\varphi_1)\cap ran(\varphi_2)=\emptyset$.

\begin{prop}\label{Proposition: ParadoxicalPseudogroups}
	Let $X$ be a totally disconnected compact Hausdorff space and $\mathcal{G}$ be a pseudogroup on $X$ such that $X=\bigcup_{\varphi\in\mathcal{G}}dom(\varphi)$. Then $X$ is weakly $\mathcal{G}$-paradoxical if and only if $X$ is $(G(\mathcal{G})^a,2,1)$-paradoxical.
	
	Moreover, if $X$ is hyperstonian then $X$ admits a $\mathcal{G}$-paradoxical decomposition in the sense of \cite[Definition~5]{MR1721355} if and only if $X$ is weakly $\mathcal{G}$-paradoxical.
\end{prop}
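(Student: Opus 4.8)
I would treat the two biconditionals separately. For the first, \emph{weakly $\mathcal{G}$-paradoxical $\Longleftrightarrow$ $(G(\mathcal{G})^a,2,1)$-paradoxical}, the implication "$\Rightarrow$" is immediate: if $\varphi_1,\varphi_2\in\mathcal{G}$ have $dom(\varphi_i)=X$ and $ran(\varphi_1)\cap ran(\varphi_2)=\emptyset$, take $n_1=n_2=1$, $V_{i,1}=U_{\varphi_i}$, $m_{i,1}=1$, and read conditions (1),(2) of Definition~\ref{DefParadoxicalDecomposition} off from $d(U_{\varphi_i})=dom(\varphi_i)$ and $r(U_{\varphi_i})=ran(\varphi_i)$. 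For "$\Leftarrow$" the crucial point, which I would isolate as a lemma, is that the canonical partial homeomorphism $\alpha_V\colon d(V)\to r(V)$ of \emph{any} compact open bisection $V$ of $G(\mathcal{G})$ lies in $\mathcal{G}$: since the $U_\varphi$ form a basis and $V$ is compact open, in the Hausdorff groupoid $G(\mathcal{G})$ one can write $V=\bigsqcup_k Z_k$ with each $Z_k$ a compact open bisection contained in some $U_{\varphi_k}$ (intersections and differences of compact open sets are again compact open there), so $\alpha_{Z_k}=\varphi_k|_{d(Z_k)}\in\mathcal{G}$ by closure under restriction and $\alpha_V\in\mathcal{G}$ by the gluing axiom. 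Granting this, from a $(G(\mathcal{G})^a,2,1)$-paradoxical decomposition $(V_{i,j})$ I would, for each fixed $i$, pass to disjoint clopen sets $A_{i,j}\subseteq d(V_{i,j})$ with $\bigsqcup_j A_{i,j}=X$, restrict $V_{i,j}$ to $A_{i,j}$, and glue the resulting $\mathcal{G}$-maps to a single $\varphi_i\in\mathcal{G}$ with $dom(\varphi_i)=X$; this is well defined and injective because condition (2) forces the $r(V_{i,j})$ to be pairwise disjoint, and the same condition across $i=1,2$ gives $ran(\varphi_1)\cap ran(\varphi_2)=\emptyset$.

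For the second biconditional (with $X$ hyperstonian), the direction \emph{$\mathcal{G}$-paradoxical decomposition $\Rightarrow$ weakly $\mathcal{G}$-paradoxical} uses no topology: given a decomposition $X=\bigsqcup_i A_i\sqcup\bigsqcup_j B_j$ with $\mathcal{G}$-maps satisfying $\bigsqcup_i\varphi_i(A_i)=X=\bigsqcup_j\psi_j(B_j)$, glue the restrictions $\varphi_i|_{A_i}$ to a homeomorphism $\varphi\in\mathcal{G}$ from $A:=\bigsqcup_i A_i$ onto $X$, and likewise $\psi\in\mathcal{G}$ from $B:=\bigsqcup_j B_j$ onto $X$; then $\varphi^{-1},\psi^{-1}\in\mathcal{G}$ have common domain $X$ and disjoint ranges $A,B$ (since $A\sqcup B=X$), so $X$ is weakly $\mathcal{G}$-paradoxical.

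The converse is where the real work is, and where hyperstonianness is used. Let $\varphi_1,\varphi_2\in\mathcal{G}$ witness weak paradoxicality, put $Y_i:=ran(\varphi_i)$ (disjoint clopen sets, with each $\varphi_i\colon X\to Y_i$ a homeomorphism), and note $\mathrm{id}_X=\varphi_1^{-1}\circ\varphi_1\in\mathcal{G}$, hence $\mathrm{id}_U\in\mathcal{G}$ for every clopen $U$. The plan is to mimic the Banach--Schr\"oder--Bernstein argument for the obvious equidecompositions $X\precsim X\sqcup X\precsim X$ while staying inside the clopen sets, by passing to closures. Put
$$W:=\bigsqcup_{n\ge 0}\varphi_1^n(Y_2),$$
an open set (the union is disjoint because $\varphi_1^n(Y_2)\subseteq Y_1$ for $n\ge 1$ while $Y_1\cap Y_2=\emptyset$) satisfying $W=Y_2\sqcup\varphi_1(W)$. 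Since a hyperstonian space is extremally disconnected, $\widehat W:=\overline W$ is clopen; and since $\varphi_1$ is a homeomorphism onto the clopen set $Y_1$ it commutes with closures of subsets of $X$, so passing to closures in $W=Y_2\sqcup\varphi_1(W)$ gives $\widehat W=Y_2\sqcup\varphi_1(\widehat W)$. Then
$$X = Y_2 \sqcup \varphi_1(\widehat W) \sqcup (X\setminus\widehat W)$$
is a clopen partition that is $\mathcal{G}$-paradoxical: $\varphi_2^{-1}\in\mathcal{G}$ maps the single piece $Y_2$ onto all of $X$, while $\varphi_1^{-1}\in\mathcal{G}$ and $\mathrm{id}\in\mathcal{G}$ map $\varphi_1(\widehat W)$ and $X\setminus\widehat W$ onto $\widehat W$ and $X\setminus\widehat W$, whose union is again $X$.

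The main obstacle is exactly this closure step: in the Banach--Schr\"oder--Bernstein construction the set $W$ is a countable union of clopen sets, which over a general totally disconnected base need not be clopen, so without a further hypothesis one only obtains a paradoxical decomposition with Borel pieces; extremal disconnectedness of a hyperstonian $X$ is what permits replacing $W$ by its clopen closure $\widehat W$ without destroying the identity $\widehat W=Y_2\sqcup\varphi_1(\widehat W)$ that makes both reassemblies land on all of $X$. (One could also argue the converse through the type semigroup: weak paradoxicality says $2[X]\le[X]$, which over a hyperstonian base a Tarski-type argument upgrades to $2[X]=[X]$.) Throughout I use only the standard closure properties of a pseudogroup from \cite[Definition~1]{MR1721355}: stability under composition, inversion, restriction to clopen subsets, and gluing of compatible families.
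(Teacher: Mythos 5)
Your proof is correct and follows essentially the same route as the paper: the forward direction via the basic bisections $U_{\varphi_i}$, the converse by partitioning compact open bisections into basis sets and invoking the gluing axiom of the pseudogroup, and the hyperstonian case via a Bernstein--Schr\"oder-type argument. The only difference is one of detail: the paper delegates the last step to a citation, whereas you carry out the closure argument for $W=\bigsqcup_{n\ge 0}\varphi_1^n(Y_2)$ explicitly (and correctly), which is exactly the use of extremal disconnectedness the paper alludes to.
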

\begin{proof}
	If $X$ admits a $\mathcal{G}$-paradoxical decomposition, there exist $\varphi_1,\varphi_2\in\mathcal{G}$ such that $dom(\varphi_i)=X$ and $ran(\varphi_1)\cap ran(\varphi_2)=\emptyset$. Then the compact open bisections $U_{\varphi_i}$, $i=1,2$ obviously implement a $(G(\mathcal{G})^a,2,1)$-paradoxical decomposition.\\
	Conversely, assume that $X$ is $(G(\mathcal{G})^a,2,1)$-paradoxical. Then we find $n,m\in\N$ and compact open bisections $V_1,\ldots,V_{n+m}$ such that
	$$X=\bigsqcup\limits_{i=1}^n d(V_i)=\bigsqcup\limits_{i=n+1}^{n+m} d(V_i)$$
	and the $r(V_i)$ are pairwise disjoint. Since the domains and ranges of the bisections within each decomposition are disjoint, the unions $S_1:=\bigsqcup_{i=1}^n V_i$ and $S_2:=\bigsqcup_{i=n+1}^{n+m} V_i$ are compact open bisections in their own right with the property that
	$X=d(S_1)=d(S_2)$ and $r(S_1)\cap r(S_2)=\emptyset$.
	We would like to show that the partial homeomorphisms $\alpha_{S_i}:X=d(S_i)\rightarrow r(S_i)$ induced by $S_i$ are contained in $\mathcal{G}$.
	To this end use compactness of $S_i$ to find a partition $S_i=\bigsqcup_{j=1}^{n_i}U_{\varphi_{i,j}}$. Then $(\alpha_{S_i})_{\mid dom(\varphi_{i,j})}=\alpha_{U_{\varphi_{i,j}}}=\varphi_{i,j}\in \mathcal{G}$ for each $1\leq j\leq n_i$ and $i=1,2$. Consequently, we have $\alpha_{S_i}\in\mathcal{G}$ by \cite[Definition~1. (v)]{MR1721355}. 
	
	The last statement follows from a classical Bernstein-Schr\"oder-type argument (see e.g. \cite[Remark~2.10]{ALLW16}), where we use the fact that countably infinite disjoint unions of compact open sets are compact open in a hyperstonian space.
\end{proof}

An interesting class of examples arises from coarse geometry. The following Corollary provides a groupoid picture of the main theorem of \cite{ALLW17}.

\begin{cor} Let $X$ be a discrete metric space with bounded geometry. Then the following are equivalent:
	\begin{enumerate}
		\item The metric space $X$ is paradoxical in the sense of \cite[Definition~2.9]{ALLW16}.
		\item The space $\beta X$ is $\mathcal{G}(X)$-paradoxical, where $\mathcal{G}(X)$ is the pseudogroup of partial transformations on $\beta X$ in the sense of \cite[Definition~3.1]{STY02}.
		\item $\beta X=G(X)^{(0)}$ is $(G(X)^a,2,1)$-paradoxical.
	\end{enumerate}	
\end{cor}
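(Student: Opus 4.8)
The plan is to prove the corollary by splitting it as $(1)\Leftrightarrow(2)$ and $(2)\Leftrightarrow(3)$. The first equivalence I would simply identify with the main theorem of \cite{ALLW17}, after checking that its hypotheses coincide with ours; the second I would derive directly from Proposition~\ref{Proposition: ParadoxicalPseudogroups}. Before invoking that proposition one has to verify its standing hypotheses in the present situation: $\beta X$ is a totally disconnected compact Hausdorff space, being the Gelfand spectrum of the unital commutative $C^*$-algebra $\ell^\infty(X)$, whose spectrum has a basis of clopen sets; the pseudogroup $\mathcal{G}(X)$ of partial transformations covers $\beta X$, i.e. $\beta X=\bigcup_{\varphi\in\mathcal{G}(X)}\mathrm{dom}(\varphi)$, since the continuous extension of $\mathrm{id}_X$ already lies in $\mathcal{G}(X)$; and, crucially, $\beta X$ is hyperstonian, because $C(\beta X)\cong\ell^\infty(X)$ is a von Neumann algebra, so that $\beta X$ is extremally disconnected and countably infinite disjoint unions of clopen sets remain clopen. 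One also needs the fact, built into \cite[Section~3]{STY02}, that the ample groupoid $G(\mathcal{G}(X))$ attached to $\mathcal{G}(X)$ is canonically isomorphic to the coarse groupoid $G(X)$, with the distinguished family $(U_\varphi)_{\varphi\in\mathcal{G}(X)}$ of compact open bisections being the one appearing in Proposition~\ref{Proposition: ParadoxicalPseudogroups}.

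Granting this, Proposition~\ref{Proposition: ParadoxicalPseudogroups} applied to the pair $(Y,\mathcal{G}):=(\beta X,\mathcal{G}(X))$ yields at once that $\beta X=G(X)^{(0)}$ is $(G(X)^a,2,1)$-paradoxical if and only if $\beta X$ is weakly $\mathcal{G}(X)$-paradoxical, and — using the hyperstonian clause — if and only if $\beta X$ admits a $\mathcal{G}(X)$-paradoxical decomposition in the sense of \cite[Definition~5]{MR1721355}, which is condition $(2)$. This is precisely $(2)\Leftrightarrow(3)$.

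For $(1)\Leftrightarrow(2)$ I would recall the translation between the metric data of \cite[Definition~2.9]{ALLW16} and the pseudogroup data. A paradoxical decomposition of the metric space $X$ consists of finite partitions $X=\bigsqcup_i A_i=\bigsqcup_j B_j$ together with partial translations $t_i\colon A_i\to t_i(A_i)$ and $s_j\colon B_j\to s_j(B_j)$ with pairwise disjoint ranges; each $t_i$, having bounded displacement, extends to a homeomorphism of the clopen set $\overline{A_i}\subseteq\beta X$ onto $\overline{t_i(A_i)}$, which by definition belongs to $\mathcal{G}(X)$. Since disjoint subsets of the discrete space $X$ have disjoint closures in $\beta X$ (their indicator functions are orthogonal idempotents of $\ell^\infty(X)$), the families $\{\overline{A_i}\}$ and $\{\overline{B_j}\}$ are clopen partitions of $\beta X$ and the closures of the ranges stay pairwise disjoint, producing a $\mathcal{G}(X)$-paradoxical decomposition of $\beta X$. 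Conversely, a $\mathcal{G}(X)$-paradoxical decomposition of $\beta X$ uses only finitely many elements of $\mathcal{G}(X)$, and a compactness argument on $\beta X$ of exactly the kind carried out in the proof of Proposition~\ref{Proposition: ParadoxicalPseudogroups} writes each of them as a finite disjoint union of extensions of genuine partial translations on $X$; refining the partitions then recovers a metric paradoxical decomposition of $X$. This argument is the content of \cite{ALLW17}, so it suffices to cite that theorem.

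The only delicate point is the bookkeeping between the two formats of a paradoxical decomposition — the metric one records partitions indexed by many partial translations, while the pseudogroup one packages everything into two ``global'' bisections — but this is the same passage already handled in Proposition~\ref{Proposition: ParadoxicalPseudogroups} and in \cite{ALLW16,ALLW17}, so no genuinely new obstacle arises. The substantive inputs are Proposition~\ref{Proposition: ParadoxicalPseudogroups} together with the hyperstonian-ness of $\beta X$, both of which are already available.
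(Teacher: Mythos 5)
Your argument is correct and follows essentially the same route as the paper: $(2)\Leftrightarrow(3)$ via Proposition~\ref{Proposition: ParadoxicalPseudogroups} together with the hyperstonian-ness of $\beta X$ and the identification $G(X)\cong G(\mathcal{G}(X))$ from \cite[Proposition~3.2]{STY02}, and $(1)\Leftrightarrow(2)$ via the bijection $A\mapsto \overline{A}$ between subsets of $X$ and compact open subsets of $\beta X$. The extra detail you supply (verifying the standing hypotheses of the proposition and unpacking the translation between metric and pseudogroup paradoxicality) is a faithful elaboration of the paper's two-line proof rather than a different approach.
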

\begin{proof}
	$(1)\Leftrightarrow (2)$: This follows from the fact that there is a one-to-one correspondence between subsets of $X$ and compact open subsets of $\beta X$ given by $A\mapsto \overline{A}\subseteq \beta X$ for all $A\subseteq X$.
	
	$(2)\Leftrightarrow (3)$: Since $\beta X$ is hyperstonian, this follows from Proposition~\ref{Proposition: ParadoxicalPseudogroups} and the fact that the coarse groupoid $G(X)$ is isomorphic to $G(\mathcal{G}(X))$ (see \cite[Proposition~3.2]{STY02}).	
\end{proof}

%We denote by $G^b$ the family of all Borel bisections of $G$, such that $d(V),r(V)$ are again Borel sets. The following result is standard:
%\begin{lem}\label{Tarski-EasyDirection}
%	Let $G$ be an étale groupoid and $\emptyset\neq A\subseteq G^{(0)}$ be a Borel set. Suppose there is a $G$-invariant Borel probability measure $\mu$ on $G^{(0)}$ with $\mu(A)>0$. Then $A$ is completely $(G,\mathbb{E})$-nonparadoxical, for every family of bisections $\mathbb{E}\subseteq G^b$.
%\end{lem}
%\begin{proof}
%Let $\mu$ be an invariant Borel probability measure. Suppose there are $k>l>0$ such that $A$ is $(G,\mathbb{E},k,l)$-paradoxical. Then we can find numbers $n_i\in\N$ for $1\leq i\leq k$ and bisections $V_{i,1},\ldots,V_{i,n_i}\in\mathbb{E}\subseteq G^b$ of $G$ and $m_{i,1},\ldots ,m_{i,n_i}\in\lbrace1,\ldots l\rbrace$ satisfying the conditions in Definition \ref{DefParadoxicalDecomposition}. Then we can compute
%\begin{align*}
%k\mu(A) = \sum\limits_{i=1}^k \mu(A)\leq \sum\limits_{i=1}^k \sum\limits_{j=1}^{n_i}\mu(d(V_{i,j}))
%=\sum\limits_{i=1}^k \sum\limits_{j=1}^{n_i}\mu(r(V_{i,j}))
%& =\sum\limits_{n=1}^l \sum\limits_{m_{i,j}=n}\mu(r(V_{i,j}))\\
%& =\sum\limits_{n=1}^l \mu(\bigcup\limits_{m_{i,j}=n}r(V_{i,j}))\\
%&\leq l\mu(A)
%\end{align*}
%Thus we obtain $k\leq l$, which contradicts our assumption.
%\end{proof}

Let us now see how we can employ paradoxicality to produce (properly) infinite projections in $C_r^*(G)$:
Recall that a projection $p$ in a $C^*$-algebra $A$ is \textit{properly infinite} if and only if there exist partial isometries $x,y\in A$ such that $x^*x=y^*y=p$ and $xx^*+yy^*\leq p$.

\begin{prop}\label{Prop:ParadSubsetsGivePropInfiniteProjections}
	Let $G$ be an étale groupoid and $A\subseteq G^{(0)}$ a compact open subset. If $A$ is $(G^{op},2,1)$-paradoxical, then there exist elements $f,g\in C_c(G)$ with $E(f),E(g)\geq 0$ such that $f^*\ast f=g^*\ast g=1_A$ and $f\ast f^*+g\ast g^*\leq 1_A$. In this case $1_A$ is a properly infinite projection in $C_r^*(G)$.\\
	%If additionally $G$ is ample, then the converse holds, i.e. if there are elements $f,g\in C_c(G)$ with $E(f),E(g)\geq 0$ such that $f^*\ast f=g^*\ast g=1_U$ and $f\ast f^*+g\ast g^*\leq 1_U$, then $U$ is $G^{op}$-paradoxical.
\end{prop}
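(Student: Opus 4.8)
The plan is to translate the combinatorial data of an $(G^{op},2,1)$-paradoxical decomposition of $A$ into the desired elements $f,g\in C_c(G)$. By Definition~\ref{DefParadoxicalDecomposition} with $k=2$, $l=1$, we obtain for $i=1,2$ bisections $V_{i,1},\dots,V_{i,n_i}\in G^{op}$ with $\bigcup_{j=1}^{n_i} d(V_{i,j})=A$ and all the sets $r(V_{i,j})$ (over both $i$ and $j$) pairwise disjoint. First I would use the fact that set differences of open bisections are again open bisections (and that the domain/range maps are homeomorphisms onto open sets) to replace, for each fixed $i$, the cover $\{d(V_{i,j})\}_j$ of $A$ by a \emph{disjoint} clopen partition: set $W_{i,j}:=d(V_{i,j})\setminus\bigcup_{j'<j}d(V_{i,j'})$ and then shrink $V_{i,j}$ to the bisection $V_{i,j}\cap d^{-1}(W_{i,j})$. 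After this reduction I may assume $A=\bigsqcup_{j=1}^{n_i} d(V_{i,j})$ for each $i$, while the ranges $r(V_{i,j})$ remain pairwise disjoint subsets of $A$.

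Next I would define $f:=\sum_{j=1}^{n_1} 1_{V_{1,j}}$ and $g:=\sum_{j=1}^{n_2} 1_{V_{2,j}}$, where $1_V\in C_c(G)$ denotes the characteristic function of the open bisection $V$ (which lies in $C_c(G)$ since $V$ is open and, after the reduction above, we may take the $V_{i,j}$ to have compact closure inside compact open bisections — in the ample case $G^a$; for general $G^{op}$ one uses compactness of $A$ to shrink to relatively compact pieces). The key computation is the standard one for characteristic functions of bisections: for open bisections $V,W$ one has $1_V^*=1_{V^{-1}}$ and $1_V\ast 1_W = 1_{VW}$ on the product bisection, so in particular $1_V^*\ast 1_V = 1_{d(V)}$ (a projection in $C_0(G^{(0)})$) and $1_V\ast 1_V^* = 1_{r(V)}$. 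Using that the domains $d(V_{i,j})$ are pairwise disjoint for fixed $i$, the cross terms $1_{V_{i,j}}^*\ast 1_{V_{i,j'}}$ vanish for $j\neq j'$, so $f^*\ast f=\sum_j 1_{d(V_{1,j})}=1_A$ and likewise $g^*\ast g=1_A$. Using that \emph{all} the ranges $r(V_{i,j})$ are pairwise disjoint, the cross terms in $f\ast f^*$, in $g\ast g^*$, and between them all vanish, giving $f\ast f^*+g\ast g^* = \sum_{i,j} 1_{r(V_{i,j})} = 1_{\bigsqcup r(V_{i,j})}\leq 1_A$, the last inequality because each $r(V_{i,j})\subseteq A$ and the union is disjoint. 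Moreover $E(f)=\sum_j 1_{V_{1,j}\cap G^{(0)}}\geq 0$ and similarly $E(g)\geq 0$, since $E$ is the restriction to $G^{(0)}$.

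Finally, to conclude that $1_A$ is properly infinite in $C_r^*(G)$, I would pass to the reduced completion: $f,g\in C_c(G)\subseteq C_r^*(G)$, and by continuity of the product and involution the identities $f^*f=g^*g=1_A$ and $ff^*+gg^*\leq 1_A$ persist in $C_r^*(G)$. Since $1_A$ is a projection, $f^*f=1_A$ shows $x:=f1_A=f$ is a partial isometry with $x^*x=1_A$ and range projection $xx^*=ff^*\leq 1_A$, hence $xx^*$ is a subprojection of $1_A$; similarly for $y:=g$. Thus $x,y$ are partial isometries with $x^*x=y^*y=1_A$ and $xx^*+yy^*\leq 1_A$, which is exactly the stated characterization of proper infiniteness of the projection $1_A$ recalled just before the proposition. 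I expect the only real subtlety to be the bookkeeping in the first step — arranging the two covers to be disjoint partitions without disturbing the disjointness of all the ranges — and, in the non-ample case, ensuring that the shrunk bisections still have characteristic functions in $C_c(G)$; everything after that is the routine bisection calculus for $C_c(G)$.
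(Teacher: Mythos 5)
There is a genuine gap: your construction only works when $G$ is ample, but the proposition is stated for a general \'etale groupoid and for $(G^{op},2,1)$-paradoxical decompositions by arbitrary \emph{open} bisections. Two of your claims fail in that generality. First, the disjointification $W_{i,j}:=d(V_{i,j})\setminus\bigcup_{j'<j}d(V_{i,j'})$ is a difference of open sets, which is in general neither open nor closed, so $V_{i,j}\cap d^{-1}(W_{i,j})$ need not be an open bisection. Second, the characteristic function $1_V$ of an open bisection lies in $C_c(G)$ only when $V$ is \emph{compact and open} (clopen); for a merely open, relatively compact $V$ it is not continuous, and ``shrinking to relatively compact pieces'' does not repair this. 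Since $G^{(0)}$ is not assumed totally disconnected (a compact open $A$ can exist without that), these steps genuinely break down. The paper's proof avoids both problems by never disjointifying: it takes continuous partitions of unity $(\varphi_i)$ subordinate to the two open covers of $A$ and builds $\psi_i=(\varphi_i^{1/2}\circ d_{\mid V_i})\in C_c(V_i)$, setting $f=\sum\psi_i$; the square roots make $f^*\ast f=\sum_i\varphi_i=1_A$ come out exactly even though the covers overlap. In the ample case your argument is a correct (and slightly more concrete) specialization, but as written it does not prove the stated proposition.

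A smaller point: you attribute the vanishing of the cross terms $1_{V_{i,j}}^*\ast 1_{V_{i,j'}}$ in $f^*\ast f$ to disjointness of the domains, and the vanishing of cross terms in $f\ast f^*$ to disjointness of the ranges. It is the other way around: $V^{-1}W=\emptyset$ iff $r(V)\cap r(W)=\emptyset$, while $VW^{-1}=\emptyset$ iff $d(V)\cap d(W)=\emptyset$. In your disjointified setup both disjointness conditions hold, so the conclusions survive, but the hypothesis actually doing the work in $f^*\ast f=1_A$ is the pairwise disjointness of the ranges $r(V_{i,j})$ supplied by paradoxicality (this is exactly the observation $V_i^{-1}V_j=\emptyset$ used in the paper). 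The final passage from $f^*f=g^*g=1_A$ and $ff^*+gg^*\leq 1_A$ to proper infiniteness of $1_A$ is fine and matches the paper.
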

\begin{proof}
	Since $A$ is $(G^{op},2,1)$-paradoxical, we can find open bisections $V_1,\ldots ,V_{n+m}\in G^{op}$ such that $\bigcup_{i=1}^n d(V_i)=\bigcup_{i=n+1}^{n+m} d(V_i)=A$, and the $(r(V_i))_i$ are pairwise disjoint subsets of $A$. Then we can find partitions of unity $(\varphi_i)_{i=1}^n$ and $(\varphi_i)_{i=n+1}^{n+m}$ subordinate to the two coverings of $A$ respectively with $supp(\varphi_i)\subseteq d(V_i)$.
	Set $\psi_i=(\varphi^{\frac{1}{2}}_i\circ d_{\mid V_i})\in C_c(V_i)\subseteq C_c(G)$ and define
	\[f=\sum\limits_{i=1}^n \psi_i,\ \ g=\sum\limits_{i=n+1}^{n+m}\psi_i\]
	Note that from the fact, that the $r(V_i)$ are pairwise disjoint we obtain $V_i^{-1}V_j=\emptyset$ and thus $\psi_i^*\ast\psi_j=0$ for all $i\neq j$. For $i=j$ we have $V_i^{-1}V_i=d(V_i)\subseteq G^{(0)}$ and thus $supp(\psi_i^*\ast\psi_i)\subseteq d(V_i)\subseteq G^{(0)}$ and for $u\in G^{(0)}$ we compute

\[	\psi_i^*\ast\psi_i(u)=\sum\limits_{g\in G^u}\psi_i(g^{-1})^2=\sum\limits_{g\in G_u\cap V_i}\psi_i(g)^2 = \varphi_i(u)\]

Using this we can compute for all $g\in G$:
\begin{align*}
f^*\ast f(g)& =\sum\limits_{h\in G^{r(g)}}f(h^{-1})f(h^{-1}g)\\
& =\sum\limits_{h\in G^{r(g)}}\sum\limits_{i,j=1}^n\psi_i(h^{-1})\psi_j(h^{-1}g)\\
&=\sum\limits_{i,j=1}^n\psi_i^*\ast\psi_j(g)\\
& =\sum\limits_{i=1}^n \psi_i^*\ast\psi_i(g)
\end{align*}
The last term is non-zero if and only if $g=u\in A$ and then by the above computation $f^*\ast f(u)=\sum_{i=1}^n \varphi_i(u)=1$. Thus we have shown $f^*\ast f=1_A$. Similarly, we can show $g^*\ast g=1_A$ and $g^*\ast f=0$, and hence $f\ast f^*+g\ast g^*\leq 1_A$.
\end{proof}

The ideas in the proof of this proposition can also be used in the case of a $(G^a,k,l)$ paradoxical decomposition for $k>l>1$ to obtain infinite projections in a matrix algebra over $C_r^*(G)$. Recall that a projection $p\in A$ is \textit{infinite} if there exists a proper subprojection $q<p$ such that $p\sim q$.
\begin{prop}\label{Prop:Stably finite implies non-paradoxical}
	Let $G$ be an ample groupoid and $A\subseteq G^{(0)}$ a compact open subset. Suppose $A$ is $(G^a,k,l)$-paradoxical. Then $1_k\otimes 1_A$ is an infinite projection in $M_k(C_r^*(G))$.
\end{prop}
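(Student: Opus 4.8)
The plan is to mimic the construction in the proof of Proposition~\ref{Prop:ParadSubsetsGivePropInfiniteProjections}, but to assemble the bisections of a $(G^a,k,l)$-paradoxical decomposition of $A$ into a single partial isometry $v\in M_k(C_r^*(G))$ with $v^*v=1_k\otimes 1_A$ and $vv^*\le 1_l\otimes 1_A$, where $1_l\otimes 1_A$ denotes $\mathrm{diag}(1_A,\dots,1_A,0,\dots,0)\in M_k(C_r^*(G))$ with $l$ copies of $1_A$. Since $k>l>0$ and $A\neq\emptyset$, the projection $1_l\otimes 1_A$ is a proper subprojection of $1_k\otimes 1_A$, so $1_k\otimes 1_A\sim vv^*<1_k\otimes 1_A$ exhibits $1_k\otimes 1_A$ as infinite.

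First I would use Remark~\ref{Remark:paradoxical}(2) to arrange, for each fixed $1\le i\le k$, that the sets $d(V_{i,1}),\dots,d(V_{i,n_i})$ are pairwise disjoint and still cover $A$. Because $G$ is ample and each $V_{i,j}$ is a compact open bisection, the characteristic function $1_{V_{i,j}}$ lies in $C_c(G)$, and for compact open bisections $V,W$ one has $1_V^*=1_{V^{-1}}$ and $1_V*1_W=1_{VW}$; in particular $1_V^**1_W$ vanishes when $r(V)\cap r(W)=\emptyset$, $1_{V_{i,j}}^**1_{V_{i,j}}=1_{d(V_{i,j})}$, and $1_A*1_{V_{i,j}}=1_{V_{i,j}}$ because $r(V_{i,j})\subseteq A$. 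For $1\le i\le k$ and $1\le p\le l$ set $f_{i,p}:=\sum_{j\,:\,m_{i,j}=p}1_{V_{i,j}}\in C_c(G)$, and let $v\in M_k(C_c(G))\subseteq M_k(C_r^*(G))$ be the matrix whose $(p,i)$-entry equals $f_{i,p}$ for $p\le l$ and $0$ for $p>l$; equivalently $v=\sum_{i=1}^k\sum_{j=1}^{n_i}e_{m_{i,j},i}\otimes 1_{V_{i,j}}$, where the $e_{p,q}$ are the matrix units of $M_k$.

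Next I would compute $v^*v$, whose $(i,i')$-entry is $\sum_{p=1}^{l}f_{i,p}^**f_{i',p}$. When $i\ne i'$, every term $1_{V_{i,j}}^**1_{V_{i',j'}}$ appearing here has $m_{i,j}=m_{i',j'}=p$ with $(i,j)\ne(i',j')$, so disjointness of the sets $r(V_{a,b})\times\{m_{a,b}\}$ forces $r(V_{i,j})\cap r(V_{i',j'})=\emptyset$ and the term vanishes; hence the off-diagonal entries are $0$. When $i=i'$, for $j\ne j'$ with $m_{i,j}=m_{i,j'}$ the same disjointness kills $1_{V_{i,j}}^**1_{V_{i,j'}}$, leaving $\sum_{p}\sum_{j\,:\,m_{i,j}=p}1_{d(V_{i,j})}=\sum_{j=1}^{n_i}1_{d(V_{i,j})}=1_A$ by the disjoint covering of $A$. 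Thus $v^*v=1_k\otimes 1_A$ and $v$ is a partial isometry. Finally, since the rows of $v$ indexed by $p>l$ vanish and $1_A*f_{i,p}=f_{i,p}$, one checks $(1_l\otimes 1_A)v=v$, and therefore $vv^*\le 1_l\otimes 1_A<1_k\otimes 1_A$, which finishes the proof.

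I do not expect a serious obstacle: the entire argument runs on the elementary convolution calculus of characteristic functions of compact open bisections, exactly as in Proposition~\ref{Prop:ParadSubsetsGivePropInfiniteProjections}. The only point needing care is the bookkeeping of indices — letting the ``slot'' $m_{i,j}$ play the role of the row index of $v$ — and keeping straight which disjointness hypothesis (of the domains within a fixed $i$, or of the triples $r(V_{i,j})\times\{m_{i,j}\}$) is being invoked at each step.
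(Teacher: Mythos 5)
Your proof is correct and is essentially the paper's own argument: the paper works with the same partial isometries $a_{i,j}=e_{m_{i,j},i}\otimes 1_{V_{i,j}}$, shows they have pairwise orthogonal domain and range projections, and sums the relations $\sum a_{i,j}^*a_{i,j}=1_k\otimes 1_A$ and $\sum a_{i,j}a_{i,j}^*\leq 1_l\otimes 1_A$ to exhibit $1_k\otimes 1_A$ as equivalent to a proper subprojection; your $v=\sum_{i,j}a_{i,j}$ just makes the implementing partial isometry explicit. The index bookkeeping and the appeal to Remark~\ref{Remark:paradoxical}(2) match the paper's treatment.
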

\begin{proof}
	Suppose there exists a compact open subset $A\subseteq G^{(0)}$ such that $A$ is $(G^a,k,l)$-paradoxical for some $k>l>0$. Then for each $1\leq i\leq k$ there exists a number $n_i\in\N$ and bisections $V_{i,1},\ldots,V_{i,n_i}$ in $G^a$ and $m_{i,1},\ldots ,m_{i,n_i}\in\lbrace1,\ldots l\rbrace$, such that $\bigsqcup_{j=1}^{n_i} d(V_{i,j})=A$ for all $1\leq i\leq k$, and
	the sets $r(V_{i,j})\times \lbrace m_{i,j}\rbrace\subseteq A\times \lbrace 1,\ldots, l\rbrace$ are pairwise disjoint.
	%Put
	%$$\psi_{i,j}:=(\varphi_{i,j}\circ d_{\mid V_{i,j}})^\frac{1}{2}\in C_c(V_{i,j})\subseteq C_c(G)$$
	%As in the proof of Proposition \ref{Prop:ParadSubsetsGivePropInfiniteProjections} one checks that
%	$\psi_{i,j}^*\ast \psi_{i,j}=\varphi_{i,j}$ and $\psi_{i,j}\ast \psi_{i,j}^*=\varphi_{i,j}\circ \alpha_{i,j}^{-1}$, where $\alpha_{i,j}:d(V_{i,j})\rightarrow r(V_{i,j})$ is the canonical homeomorphism determined by the bisection $V_{i,j}$.
	Put $$a_{i,j}=e_{m_{i,j},i}\otimes 1_{V_{i,j}}\in M_k(C_r^*(G)),$$
	Then $a_{i,j}^*a_{i',j'}$ is zero unless $(i,j)=(i',j')$ and in that case we have $a_{i,j}^*a_{i,j}=e_{i,i}\otimes 1_{d(V_{i,j})}$. Similarly
	$a_{i,j}a_{i',j'}^*$ is zero unless $(i,j)=(i',j')$ and then $a_{i,j}a_{i,j}^*=e_{m_{i,j},m_{i,j}}\otimes 1_{r(V_{i,j})}$.
	Consequently the $a_{i,j}$ are partial isometries with pairwise orthogonal domain and range projections.
	We compute
	$$\sum\limits_{i=1}^k\sum\limits_{j=1}^{n_i}a_{i,j}^*a_{i,j}=\sum\limits_{i=1}^k e_{i,i}\otimes \left(\sum\limits_{j=1}^{n_i} 1_{d(V_{i,j})}\right)=\sum\limits_{i=1}^k e_{i,i}\otimes 1_A=1_k\otimes 1_A,$$
	and similarly, using a re-indexation we get
	$$\sum\limits_{i=1}^k\sum\limits_{j=1}^{n_i}a_{i,j}a_{i,j}^*=\sum\limits_{i=1}^k\sum\limits_{j=1}^{n_i} e_{m_{i,j},m_{i,j}}\otimes 1_{r(V_{i,j})}= \sum\limits_{r=1}^l e_{r,r}\otimes (\sum\limits_{\lbrace r\mid m_{i,j}=r\rbrace}1_{r(V_{i,j})})\leq 1_l\otimes 1_A.$$
	If we put $p:=\sum_{r=1}^l e_{r,r}\otimes (\sum_{\lbrace r\mid m_{i,j}=r\rbrace}1_{r(V_{i,j})})$, then these computations imply
	$$p\leq 1_l\otimes 1_A<1_k\otimes 1_A\sim p.$$
	Hence $1_k\otimes 1_A$ is infinite as desired.
	%Looking at the images in $K$-theory we obtain	
	%$$k[1_A]_0=\left[\sum\limits_{i=1}^k\sum\limits_{j=1}^{n_i}a_{i,j}^*a_{i,j}\right]_0=\sum\limits_{i=1}^k\sum\limits_{j=1}^{n_i}[a_{i,j}^*a_{i,j}]_0=\sum\limits_{i=1}^k\sum\limits_{j=1}^{n_i}[a_{i,j}a_{i,j}^*]_0\leq l[1_A]_0.$$
	%Hence $(l-k)[1_A]_0\in K_0(C_r^*(G))^+$. As $l-k<0$ and $C_r^*(G)$ is stably finite \cite[Proposition 5.1.5]{RLL00} implies $[1_A]_0=0$. Thus there exists a $d\in \N$ and a projection $p\in M_d(C_r^*(G))$ such that $p<p\oplus 1_A\sim p$. Hence $1_A\oplus p$ is an infinite projection.
\end{proof}

%We need the following presumably well-known Lemma.
%\begin{lem}\label{InfiniteProjections}
	%Let $A$ be a $C^*$-algebra and $p,q\in A$ two projections such that $p$ is infinite and $p\preceq q$.
	%Then $q$ is infinite.
%\end{lem}
%\begin{proof}
	%Let us first specialize to the case that $p$ is a subprojection of $q$ i.e. $p\leq q$. Since $p$ is infinite there exists a proper subprojection $p_0<p\leq q$ such that $p_0\sim p$. Then $q-p$ is a projection orthogonal to $p$ and hence also to $p_0$. Thus we get $q=p+(q-p)\sim p_0+(q-p)$, where the latter is a proper subprojection of $q$ by construction.\\
	%Let us turn to the general case:
	%If $p$ is infinite there is a proper subprojection $p_0< p$ such that $p_0\sim p$ and from $p\preceq q$ we obtain a projection $q_0$ such that $p\sim q_0\leq q$. Let $v\in A$ be the partial isometry implementing the equivalence $p\sim q_0$, i.e. we have $v^*v=p$ and $vv^*=q_0$. Then $w:=vp_0$ is a partial isometry with domain projection $p_0$ and range projection $q':=vp_0v^*\leq q_0$.
	%We claim that $q'$ is a proper subprojection of $q_0$.
	%Suppose not. Then we can compute
	%$$vp=vv^*v=q_0v=vp_0v^*v=vp_0p=vp_0$$ and hence
	%$p=p^2=v^*vp=v^*vp_0=pp_0=p_0$ which contradicts our assumption that $p_0$ is a proper subprojection of $p$. Thus the claim follows and we have shown that $q_0$ is an infinite projection. Since $q_0\leq q$ we can apply the first part of this proof to conclude that $q$ must be infinite.
%\end{proof}

Let us formulate the conclusions we can draw from Theorem \ref{Thm:CharacterizationPurelyInfinite} in combination with Proposition \ref{Prop:ParadSubsetsGivePropInfiniteProjections}. 
\begin{cor}\label{Cor: Every set is paradoxical implies purely infinite}
	Let $G$ be an ample groupoid, which is essentially principal and inner exact. Let $\mathcal{B}$ be a basis for the topology of $G^{(0)}$ consisting of compact open sets. Suppose each $A\in \mathcal{B}$ is $(G^a,2,1)$-paradoxical. Then $C_r^*(G)$ is (strongly) purely infinite.
	
%	If moreover $G$ is second countable and amenable, then $dim_{nuc}(C_r^*(G))=1$.
\end{cor}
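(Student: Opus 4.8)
The plan is to deduce this corollary directly from Theorem~\ref{Thm:CharacterizationPurelyInfinite} by verifying its condition~(2): every non-zero projection $p\in C_0(G^{(0)})$ is properly infinite in $C_r^*(G)$. Since $G^{(0)}$ is totally disconnected, such a projection is the characteristic function $1_A$ of a compact open set $A\subseteq G^{(0)}$, so the task reduces to showing that $1_A$ is properly infinite for every compact open $A$.

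First I would reduce from an arbitrary compact open $A$ to basic sets. Since $\mathcal{B}$ is a basis of compact open sets and $A$ is compact open, I can write $A=\bigcup_{i=1}^n B_i$ with each $B_i\in\mathcal{B}$, and then disjointify (using that differences of compact open sets are compact open) to get $A=\bigsqcup_{i=1}^n C_i$ where each $C_i\subseteq B_i$ is compact open. Now $1_A=\sum_{i=1}^n 1_{C_i}$ is a sum of pairwise orthogonal projections. By hypothesis each $B_i$ is $(G^a,2,1)$-paradoxical; I would need the (easy) observation that if $B$ is $(G^a,2,1)$-paradoxical and $C\subseteq B$ is compact open, then $C$ is $(G^a,2,1)$-paradoxical as well — one simply intersects the given bisections with $d^{-1}(C)$, which stays inside $G^a$ since $C$ is compact open, and checks conditions (1) and (2) of Definition~\ref{DefParadoxicalDecomposition} are inherited. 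Hence each $1_{C_i}$ is a properly infinite projection by Proposition~\ref{Prop:ParadSubsetsGivePropInfiniteProjections} (applied with $\mathbb{E}=G^a\subseteq G^{op}$).

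Next I would assemble: $1_A$ is a finite sum of pairwise orthogonal properly infinite projections, all lying in the same $C^*$-algebra $C_r^*(G)$. A finite orthogonal sum of properly infinite projections is properly infinite (this is standard; e.g.\ it follows from the fact that $p\oplus p\precsim p$ for each summand together with additivity of $\oplus$ under orthogonal sums, or one can cite the relevant part of \cite[Section~3]{KR00}). Therefore $1_A=1_A$ is properly infinite in $C_r^*(G)$, establishing condition~(2) of Theorem~\ref{Thm:CharacterizationPurelyInfinite}. Since $G$ is assumed ample, essentially principal and inner exact, that theorem applies and yields that $C_r^*(G)$ is (strongly) purely infinite.

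The only slightly delicate point — and the one I would want to spell out carefully rather than wave at — is the passage from ``every basic set is paradoxical'' to ``every compact open set is paradoxical'', i.e.\ the hereditarity of $(G^a,2,1)$-paradoxicality under passing to compact open subsets, together with the bookkeeping that lets the finitely many orthogonal pieces be combined. Neither step is hard, but both use totally disconnectedness (compact opens form a Boolean algebra) in an essential way, so I would state them explicitly. Everything else is an invocation of results already in the paper.
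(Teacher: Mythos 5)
There is a genuine gap, and it sits exactly at the point you flagged as ``slightly delicate'': the claimed hereditarity of $(G^a,2,1)$-paradoxicality under passing to compact open subsets. Your proposed argument --- replace each $V_{i,j}$ by $W_{i,j}:=V_{i,j}\cap d^{-1}(C)$ --- does arrange $\bigcup_j d(W_{i,j})=C$, but condition (2) of Definition~\ref{DefParadoxicalDecomposition} applied to the set $C$ requires $r(W_{i,j})\times\lbrace m_{i,j}\rbrace\subseteq C\times\lbrace 1\rbrace$, whereas all you get is $r(W_{i,j})\subseteq r(V_{i,j})\subseteq A$; the ranges have no reason to land inside $C$. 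Nor can you additionally intersect with $r^{-1}(C)$ without destroying the covering of $C$ by the domains. In the language of the type semigroup the claim is that $2[A]\leq[A]$ and $C\subseteq A$ force $2[C]\leq[C]$, which would need a cancellation/refinement property that $S(G,G^a)$ does not have in general; this is precisely why hypotheses elsewhere in the paper (e.g.\ condition (1) of Theorem~\ref{purely inf}) are stated for \emph{every} clopen subset rather than for $G^{(0)}$ alone. Since a subprojection of a properly infinite projection need not be properly infinite, you cannot repair the argument by simply noting $1_{C_i}\leq 1_{B_i}$ either, so your verification of condition (2) of Theorem~\ref{Thm:CharacterizationPurelyInfinite} does not go through. (Your other ingredient --- that a finite orthogonal sum of properly infinite projections is properly infinite --- is fine.)

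The paper sidesteps this entirely by verifying condition (3) of Theorem~\ref{Thm:CharacterizationPurelyInfinite} instead of condition (2). Given a closed invariant $D$ and a non-zero projection $p=1_U\in C_0(D)$ with $U\subseteq D$ compact open, one chooses $A\in\mathcal{B}$ with $\emptyset\neq A\cap D\subseteq U$; then $1_A$ is properly infinite in $C_r^*(G)$ by Proposition~\ref{Prop:ParadSubsetsGivePropInfiniteProjections}, its image $\pi(1_A)=1_{A\cap D}$ is a non-zero (hence infinite) projection in $C_r^*(G_D)$, and since $1_{A\cap D}\leq p$ and infiniteness passes \emph{up} from a non-zero subprojection to any projection dominating it, $p$ is infinite. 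This only ever uses paradoxicality of the basis elements themselves, which is exactly what is hypothesized. If you want to salvage your own write-up, switch from condition (2) to condition (3) and replace the hereditarity lemma by this domination argument.
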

\begin{proof}
	From Proposition \ref{Prop:ParadSubsetsGivePropInfiniteProjections} we conclude that the characteristic functions $1_A\in C_0(G^{(0)})$ for $A\in\mathcal{B}$ are properly infinite. We use this to check condition $(3)$ of Theorem \ref{Thm:CharacterizationPurelyInfinite}: Let $D\subseteq G^{(0)}$ be a closed invariant subset and $0\neq p\in C_0(D)$ a projection. Hence $p=1_U$ for some compact open subset $U\subseteq D$. By assumption there exists a compact open subset $A\in\mathcal{B}$ such that $A\cap D\subseteq U$ and the projection $1_A$ is properly infinite in $C_r^*(G)$. Hence its image under the quotient map $\pi:C_r^*(G)\rightarrow C_r^*(G_D)$ is infinite in $C_r^*(G_D)$. But $\pi(1_A)=1_{A\cap D}\leq p$ and thus $p$ is infinite.
%	For the second assertion, note that by Corollary \ref{Cor:IdealSeparationImplies(IP)} $C_r^*(G)$ has the ideal property (IP) and hence $Prim(C_r^*(G))$ has a basis for its topology consisting of compact open sets (confer \cite[Proposition 2.11]{PR07}). Since $G$ is amenable, we also have that $C_r^*(G)$ is nuclear and hence by \cite[Theorem 3.2]{G17} we have $\dim_{nuc}(C_r^*(G))=1$.
\end{proof}

We remark, that if in the situation of the previous corollary $G$ is moreover assumed to be amenable, then $dim_{nuc}(C_r^*(G))=1$.
Indeed, note that by Corollary \ref{Cor:IdealSeparationImplies(IP)} $C_r^*(G)$ has the ideal property (IP) and hence $Prim(C_r^*(G))$ has a basis for its topology consisting of compact open sets (confer \cite[Proposition 2.11]{PR07}). Since $G$ is amenable, we also have that $C_r^*(G)$ is nuclear. It follows that we have $\dim_{nuc}(C_r^*(G))=1$ by an earlier announced result of Gabe, which will appear in a forthcoming paper called "Strong pure infinite $C^*$-algebras have nuclear dimension one" by Bosa, Gabe, Sims, and White.

\section{The type semigroup}

It was already noted by Tarski, who studied paradoxical decompositions in a discrete setting, that the notion of paradoxical decomposition can be phrased in algebraic terms, by introducing the so called type semigroup. In this section we will define a version of the type semigroup in the context of ample groupoids and show that a paradoxical decomposition reflects as an inequality (with respect to the algebraic preorder) in this semigroup (see Lemma \ref{Lem:ParadoxicalityInTermsOfTypeSemigrp}). The importance of understanding the type semigroup becomes apparent when combining it with the following theorem of Tarski, a proof of which can be found in \cite[Theorem~9.1]{Wagon93}.
\begin{thm}(Tarski)\label{Thm:ExistenceOfStatesOnTypeSemigroup}
	Let $S$ be an abelian monoid equipped with the algebraic preorder $\leq$ and let $x\in S$. Then the following are equivalent:
	\begin{enumerate}
		\item $(n+1)x\not\leq nx$ for all $n\in\N$.
		\item There exists an additive map $f:S\rightarrow [0,\infty]$ such that $f(x)=1$.
	\end{enumerate}
\end{thm}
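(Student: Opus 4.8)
The plan is to prove the substantial implication $(1)\Rightarrow(2)$ by a Hahn--Banach separation argument over the Grothendieck group of $S$; the converse $(2)\Rightarrow(1)$ is trivial, since if $f$ is additive with $f(x)=1$ and $(n+1)x+c=nx$ for some $c\in S$, applying $f$ gives $n+1\le n+1+f(c)=n$, a contradiction.

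The first step is to isolate the one place where hypothesis $(1)$ is used, in the form of an \emph{absorption lemma}: under $(1)$ one has $mx\not\le nx$ for all $m>n\ge1$, and, more generally, there is no integer $j\ge1$ and no $t\in S$ with $t\le Mx$ for some $M$ such that $jx+t\le t$. Both assertions are proved by the same mechanism: from a relation $nx+kx+c=nx$ with $k\ge1$ one gets $(n+1)x\le nx$ (using $x\le kx$), contradicting $(1)$; and a relation $jx+t=t$ iterates to $\ell jx+t=t$ for all $\ell$, whence $\ell jx\le t\le Mx$ for every $\ell$, which for large $\ell$ contradicts the first assertion. I expect this monoid bookkeeping, together with the verification that the functional constructed below is finite-valued, to be the only real obstacle; the remainder is formal.

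Next I would reduce to a bounded situation. Put $S_0:=\{y\in S:\ y\le nx\text{ for some }n\in\N\}$, a submonoid of $S$ whose complement is absorbing (if $y+z\le nx$ then $y\le nx$). Hence setting $f\equiv\infty$ on $S\setminus S_0$ is automatically compatible with additivity, and it suffices to construct a finite additive map $f\colon S_0\to[0,\infty)$ with $f(x)=1$. Form the Grothendieck group $W:=\mr{Gr}(S_0)$, with canonical monoid homomorphism $\iota\colon S_0\to W$ and positive cone $C:=\iota(S_0)$; equip $W$ with the preorder $a\le b\Leftrightarrow b-a\in C$, and write $[x]:=\iota(x)$. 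By the definition of $S_0$ the element $[x]$ is an order unit of $(W,C)$, and the absorption lemma says exactly that $-k[x]\notin C$ for every integer $k\ge1$; in particular $[x]$ has infinite order in $W$, so $k[x]\mapsto k$ is a well-defined homomorphism $\Z[x]\to\R$.

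Finally I would run Hahn--Banach. Define $p\colon W\to\R$ by $p(w)=\inf\{m/n:\ n\ge1,\ m\in\Z,\ nw\le m[x]\}$. Because $[x]$ is an order unit the defining set is nonempty and $p(w)<\infty$; because $-k[x]\notin C$ for all $k\ge1$ one gets $p(w)>-\infty$; and $p$ is subadditive with $p(k[x])=k$. The group version of the Hahn--Banach theorem (whose one-step extension uses only subadditivity of $p$, via the inequality $mp(w)\ge p(mw)$) extends the homomorphism $k[x]\mapsto k$ on $\Z[x]$ to a group homomorphism $\phi\colon W\to\R$ with $\phi\le p$. Then $\phi([x])=1$, and $\phi\ge0$ on $C$ since $p(-c)\le0$ for every $c\in C$. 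Pulling $\phi$ back along $\iota$ and extending by $\infty$ on $S\setminus S_0$ yields the desired additive map $f\colon S\to[0,\infty]$ with $f(x)=1$.
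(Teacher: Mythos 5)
Your argument is correct. Note, however, that the paper does not prove this theorem at all: it attributes it to Tarski and refers the reader to \cite[Theorem~9.1]{Wagon93}, so there is no in-paper proof to match. Your write-up is therefore a genuine self-contained alternative, and it holds up: the reduction to the bounded submonoid $S_0$ is sound because the complement of $S_0$ is absorbing; the ``absorption lemma'' correctly translates, via the defining relation of the Grothendieck group ($\iota(a)=\iota(b)$ iff $a+e=b+e$ for some $e\in S_0$), into $-k[x]\notin C$ for $k\geq 1$, which is exactly what makes $p(w)=\inf\{m/n: nw\leq m[x]\}$ bounded below and forces $p(k[x])=k$; and the group Hahn--Banach extension of $k[x]\mapsto k$ dominated by $p$ needs only subadditivity of $p$ and its finiteness, both of which you secure from $[x]$ being an order unit. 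Compared with the classical proof in Wagon's book, which works directly inside the semigroup by successively extending partial additive functions (essentially a Hahn--Banach one-step extension phrased semigroup-theoretically, combined with Zorn's lemma), your detour through $\mr{Gr}(S_0)$ buys a cleaner statement of the extension step at the cost of having to check that no ``negative multiple of $x$'' survives the Grothendieck quotient --- which is precisely the content of your absorption lemma, and you do check it. One small point worth making explicit if you write this up in full: additivity of $f$ forces $f(0)=0$ (otherwise $f(x)=\infty$), which is what makes the trivial direction $(2)\Rightarrow(1)$ work as you state it.
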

We will see how a map as in $(2)$ of the above theorem on the type semigroup can (under suitable hypothesis on $G$) be lifted to a trace on $C_r^*(G)$.

Let $G$ be an ample groupoid. Recall that $G^a$ denotes the set of all compact open bisections of $G$. We define a relation $\sim$ on the set
$$\lbrace \bigcup\limits_{i=1}^n A_i\times\lbrace i\rbrace\mid n\in\N, A_i\in G^a \textit{ and } A_i\subseteq G^{(0)}\rbrace,$$
as follows: For sets $A=\bigcup_{i=1}^n A_i\times\lbrace i\rbrace$ and $B=\bigcup_{j=1}^m B_i\times\lbrace j\rbrace$ we write $A\sim B$, if there exists an $l\in\N$, bisections $W_1,\ldots,W_l\in G^a$ and numbers $n_1,\ldots,n_l,m_1,\ldots,m_l\in\N$ such that
$$A=\bigsqcup\limits_{k=1}^l d(W_k)\times\lbrace n_k\rbrace,\ \ B=\bigsqcup\limits_{k=1}^l r(W_k)\times\lbrace m_k\rbrace.$$
\begin{lem}
	The relation $\sim$ is an equivalence relation.
\end{lem}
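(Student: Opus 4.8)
The plan is to verify the three defining properties of an equivalence relation for $\sim$ directly from the definition.

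\textbf{Reflexivity.} Given $A=\bigcup_{i=1}^n A_i\times\lbrace i\rbrace$ with $A_i\in G^a$ and $A_i\subseteq G^{(0)}$, one would like to write $A$ itself in the form $\bigsqcup_{k=1}^l d(W_k)\times\lbrace n_k\rbrace = \bigsqcup_{k=1}^l r(W_k)\times\lbrace m_k\rbrace$. The subtlety is that the sets $A_i\times\lbrace i\rbrace$ for different $i$ are automatically disjoint (the second coordinate distinguishes them), but for a fixed $i$ there is no reason for the $A_i$ to be disjoint across the various $i$'s — however that is irrelevant since the labels separate them. The only genuine point is that we need a \emph{disjoint} union $\bigsqcup_k d(W_k)\times\lbrace n_k\rbrace$, so I would take, for each $i$, a single bisection $W = A_i \subseteq G^{(0)}$ (a compact open subset of the unit space is a compact open bisection with $d(W)=r(W)=A_i$), set the label $n_k=m_k=i$, and observe that $\bigsqcup_{i=1}^n A_i\times\lbrace i\rbrace = A$ as a genuine disjoint union because of the distinct second coordinates. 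Hence $A\sim A$.

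\textbf{Symmetry.} If $A\sim B$ via bisections $W_1,\dots,W_l\in G^a$ and labels $n_k,m_k$ with $A=\bigsqcup_k d(W_k)\times\lbrace n_k\rbrace$ and $B=\bigsqcup_k r(W_k)\times\lbrace m_k\rbrace$, then one simply replaces each $W_k$ by $W_k^{-1}\in G^a$ (the inverse of a compact open bisection is again a compact open bisection, and $d(W_k^{-1})=r(W_k)$, $r(W_k^{-1})=d(W_k)$) and swaps the roles of the label sequences, giving $B=\bigsqcup_k d(W_k^{-1})\times\lbrace m_k\rbrace$ and $A=\bigsqcup_k r(W_k^{-1})\times\lbrace n_k\rbrace$. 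Hence $B\sim A$.

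\textbf{Transitivity.} This is the main obstacle. Suppose $A\sim B$ via $W_1,\dots,W_l$ with $A=\bigsqcup_{k} d(W_k)\times\{n_k\}$, $B=\bigsqcup_k r(W_k)\times\{m_k\}$, and $B\sim C$ via $W'_1,\dots,W'_{l'}$ with $B=\bigsqcup_{t} d(W'_t)\times\{p_t\}$, $C=\bigsqcup_t r(W'_t)\times\{q_t\}$. The issue is that the two decompositions of $B$ need not be compatible: the level-$j$ slice of $B$ is covered in one way by $\{r(W_k): m_k=j\}$ and in another way by $\{d(W'_t): p_t=j\}$, but these are two different partitions of the same compact open set. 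The remedy is the standard refinement argument: for each pair $(k,t)$ with $m_k=p_t=:j$, consider the compact open subset $r(W_k)\cap d(W'_t)\subseteq G^{(0)}$, and form the bisection $Z_{k,t}:=W'_t\cdot\bigl(r(W_k)\cap d(W'_t)\bigr)\cdot \cdots$ — more precisely, restrict $W_k$ to the part of its range lying in $d(W'_t)$, compose with the corresponding restriction of $W'_t$, obtaining a compact open bisection $Z_{k,t}$ with $d(Z_{k,t})=d(W_k)\cap (\text{appropriate preimage})$ and $r(Z_{k,t})=(\text{appropriate subset of } r(W'_t))$, assigned labels $n_k$ and $q_t$ respectively. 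Because both $\bigsqcup_{k:m_k=j} r(W_k)=B_j=\bigsqcup_{t:p_t=j} d(W'_t)$ (where $B_j$ is the level-$j$ slice), the restricted sets $d(Z_{k,t})$ reassemble (disjointly, since set-differences and intersections of compact open sets are compact open) to give exactly $\bigsqcup_k d(W_k)\times\{n_k\}=A$, and likewise the $r(Z_{k,t})$ reassemble to $C$. Thus the family $\{Z_{k,t}\}$ witnesses $A\sim C$. The bookkeeping here — checking that the refined domains exactly partition $A$ and the refined ranges exactly partition $C$ — is routine but requires care; I would phrase it by fixing a level $j$, intersecting the two partitions of $B_j$, and pulling the resulting finer partition back through $W_k$ and forward through $W'_t$.
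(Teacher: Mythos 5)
Your proposal is correct and follows exactly the same route as the paper: reflexivity by taking each $W_i:=A_i$ (a compact open subset of $G^{(0)}$ being a bisection with $d=r=\mathrm{id}$), symmetry by passing to the inverse bisections $W_k^{-1}$, and transitivity by a common refinement of the two decompositions of the middle set. In fact the paper's own proof is terser --- it dismisses transitivity with the single phrase ``Transitivity follows by taking common refinements'' --- so your spelled-out refinement via the intersections $r(W_k)\cap d(W'_t)$ and the composed bisections $Z_{k,t}$ is just a fuller version of the intended argument.
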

\begin{proof}
If $A=\bigcup_{i=1}^n A_i\times\lbrace i\rbrace$, then let $l:=n$ and $W_i:=A_i\in G^a$. Thus we obtain $A\sim A$. To see symmetry suppose $A\sim B$ via the bisections $W_1,\ldots, W_l\in G^a$. Then $B\sim A$ via $W_1^{-1},\ldots, W_l^{-1}\in G^a$. Transitivity follows by taking common refinements.
\end{proof}
\begin{defi}
	Let $G$ be an ample groupoid. We define the type semigroup of $G$ to be the set
	$$S(G,G^a):=\lbrace \bigcup\limits_{i=1}^n A_i\times\lbrace i\rbrace\mid n\in\N, A_i\in G^a \textit{ and } A_i\subseteq G^{(0)}\rbrace/\sim$$
\end{defi}
As usual we denote by $[A]$ the equivalence class of $A$, and define the addition in $S(G,G^a)$ by
$$\left[  \bigcup\limits_{i=1}^n A_i\times\lbrace i\rbrace\right] + \left[ \bigcup\limits_{j=1}^m B_j\times\lbrace j\rbrace\right]=\left[ \bigcup\limits_{i=1}^n A_i\times\lbrace i\rbrace \cup \bigcup\limits_{j=1}^m B_j\times\lbrace n+j\rbrace\right] $$
By slight abuse of notation we will also write $[A]$ for the class of $A\times \lbrace 1\rbrace$ if $A\subseteq G^{(0)}$ and $A\in G^a$.
The semigroup $S(G,G^a)$ has neutral element $0=[\emptyset]$ and we equip it with the algebraic preorder, i.e. $x\leq y$ if $y=x+z$ for some $z\in S(G,G^a)$.

\begin{rem}\label{Remark:Comparison with old definition of type sgrp}
	Our definition of the type semigroup generalizes the earlier definitions for (partial) group actions of discrete groups: If $((X_\gamma)_{\gamma\in\Gamma},(\theta_\gamma)_{\gamma\in\Gamma})$ is a partial action of a discrete group $\Gamma$ on a totally disconnected space $X$, such that all the sets $X_\gamma$ are compact open in $X$, we can form the transformation groupoid $\Gamma\ltimes X$ given as
	$$\Gamma\ltimes X=\lbrace (\gamma,x)\in \Gamma\times X\mid x\in X_\gamma\rbrace,$$
	with operations given by
	$(\gamma,x)(\gamma',\theta_{\gamma^{-1}}(x))=(\gamma\gamma',x)$ and $(\gamma,x)^{-1}=(\gamma^{-1},\theta_{\gamma^{-1}}(x))$.
	This groupoid is always ample.
	If we denote the collection of compact open subsets of $X$ by $\mathbb{K}$, then one can define the type semigroup $S(X,\Gamma,\mathbb{K})$ (see \cite[Definition 7.1]{MR3144248} for the case of partial actions and \cite{RS12} for the special case of global actions). Then it is not hard to see that
	$$S(\Gamma\ltimes X,(\Gamma\ltimes X)^a)=S(X,\Gamma,\mathbb{K}).$$
	Indeed, the underlying sets of the two semigroups are the same, and one easily verifies, that our equivalence relation $\sim$ coincides with the equivalence relation $\sim_S$ defined in \cite{MR3144248}: Let $A=\bigcup_{i=1}^nA_i \times \lbrace i\rbrace$ and $B=\bigcup_{j=1}^m B_j \times\lbrace j\rbrace$ with $A_i,B_j\subseteq X$ compact open for all $1\leq i\leq n$ and $1\leq j\leq m$. If $A\sim_S B$ via the elements $\gamma_1,\ldots,\gamma_l$ and corresponding clopen subsets $C_1,\ldots, C_l$ with $C_k\subseteq X_{\gamma_k^{-1}}$, then $W_k:=\lbrace \gamma_k\rbrace \times \theta_{\gamma_k}(C_k)$ is a compact open bisection for each $1\leq k\leq l$ implementing the equivalence $A\sim B$.
	The converse is shown similarly. One just has to note that every compact open bisection $W$ of $\Gamma\ltimes X$ is a finite (disjoint) union of bisections $$W=\bigsqcup_i\lbrace \gamma_i\rbrace\times C_i$$ with $C_i\subseteq X_\gamma$ a compact open subset.
	Since $W$ is a bisection it follows that also the $r(\lbrace\gamma_i\rbrace \times C_i)$ and $d(\lbrace\gamma_i\rbrace \times C_i)$ are pairwise disjoint respectively. If we apply this fact to each compact open bisection appearing in an implementation of the equivalence $A\sim B$, we obtain the group elements and compact open subsets of $X$ implementing an equivalence $A\sim_S B$.
\end{rem}

\begin{rem}\label{Remark:SetInclusionImpliesInequalityInTypeSemigroup}
Consider compact open subsets $A\subseteq B\subseteq G^{(0)}$. Let $C=B\setminus A\subseteq G^{(0)}$. Then $C\in G^a$ and one immediately checks that $A\times\lbrace 1\rbrace \cup C\times \lbrace 2\rbrace \sim B\times \lbrace 1\rbrace$. Hence $[A]\leq [B]$ in $S(G,G^a)$.
\end{rem}
The next lemma shows, how the type semigroup reflects paradoxicality:
\begin{lem}\label{Lem:ParadoxicalityInTermsOfTypeSemigrp}
	Let $G$ be an ample groupoid and $k>l>0$. If $A\subseteq G^{(0)}$ is compact open, then
	$A$ is $(G^a,k,l)$-paradoxical if and only if $k[A]\leq l[A]$ in $S(G,G^a)$.
	\end{lem}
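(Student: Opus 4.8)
The plan is to prove both implications directly by unwinding the definitions of $(G^a,k,l)$-paradoxicality and of the algebraic preorder on $S(G,G^a)$.

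First I would prove the forward direction. Suppose $A$ is $(G^a,k,l)$-paradoxical, witnessed by bisections $V_{i,j} \in G^a$ and indices $m_{i,j} \in \{1,\dots,l\}$ for $1 \leq i \leq k$, $1 \leq j \leq n_i$, satisfying conditions (1) and (2) of Definition~\ref{DefParadoxicalDecomposition}. By Remark~\ref{Remark:paradoxical}(2) I may assume that for each fixed $i$ the union $\bigcup_{j=1}^{n_i} d(V_{i,j}) = A$ is disjoint, so that
$$k[A] = \left[\bigsqcup_{i=1}^k \bigsqcup_{j=1}^{n_i} d(V_{i,j}) \times \{(i,j)\}\right],$$
after suitably re-indexing the labels. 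The collection $\{W_{(i,j)} := V_{i,j}\}$ then exhibits this class as equivalent (via the relation $\sim$) to $\big[\bigsqcup_{i,j} r(V_{i,j}) \times \{m_{i,j}\}\big]$, because the $d(V_{i,j})$ are pairwise disjoint with the chosen labels and the $r(V_{i,j}) \times \{m_{i,j}\}$ are pairwise disjoint by condition (2). Grouping the pieces $r(V_{i,j})$ according to which value $r \in \{1,\dots,l\}$ equals $m_{i,j}$, and observing that all these pieces sit inside $A$, I get that the resulting set is of the form $\bigsqcup_{r=1}^l C_r \times \{r\}$ with each $C_r \subseteq A$ compact open. By Remark~\ref{Remark:SetInclusionImpliesInequalityInTypeSemigroup} we have $[C_r] \leq [A]$ for each $r$, hence $\big[\bigsqcup_{r=1}^l C_r \times \{r\}\big] \leq l[A]$, and therefore $k[A] \leq l[A]$.

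For the converse, suppose $k[A] \leq l[A]$ in $S(G,G^a)$. Then there is some $z \in S(G,G^a)$ with $k[A] + z = l[A]$, so in particular $k[A] \leq l[A]$ is witnessed by an equivalence $\bigsqcup_{i=1}^k A \times \{i\} \sim \bigsqcup_{r=1}^l C_r \times \{r\}$ together with the extra summand $z$; concretely, representing $l[A]$ as $\bigsqcup_{r=1}^l A \times \{r\}$, the equivalence $\sim$ gives bisections $W_1,\dots,W_s \in G^a$ and labels $p_1,\dots,p_s \in \{1,\dots,k\}$, $q_1,\dots,q_s \in \{1,\dots,l\}$ with $\bigsqcup_{i=1}^k A\times\{i\} = \bigsqcup_{t=1}^s d(W_t)\times\{p_t\}$ and $\bigsqcup_{r=1}^l A\times\{r\} \supseteq \bigsqcup_{t=1}^s r(W_t)\times\{q_t\}$ (the inclusion accounting for $z$). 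For each fixed $i \in \{1,\dots,k\}$, the bisections $\{W_t : p_t = i\}$ have domains partitioning $A$, which gives condition (1); and the sets $r(W_t) \times \{q_t\}$ are pairwise disjoint subsets of $A \times \{1,\dots,l\}$, giving condition (2). Thus $A$ is $(G^a,k,l)$-paradoxical.

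The main obstacle is purely bookkeeping: matching the multi-index labeling in the definition of paradoxicality with the single-index labeling in the equivalence relation $\sim$, and correctly handling the ``slack'' summand $z$ in the inequality $k[A] \leq l[A]$ (i.e. that $\sim$ gives an honest equivalence with the full set $\bigsqcup_{r=1}^l A \times \{r\}$, of which the ranges $r(W_t)\times\{q_t\}$ form a sub-disjoint-union rather than an exact partition). Both directions also use implicitly that finite disjoint unions and set-differences of compact open sets are again compact open bisections, so that all the combinations of the $V_{i,j}$ or $W_t$ stay inside $G^a$; this is where ampleness of $G$ is used and it is routine. No genuine analytic input is needed — the statement is entirely combinatorial.
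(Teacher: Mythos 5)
Your proof is correct and follows essentially the same route as the paper's: disjointify the domains, use the $V_{i,j}$ themselves as the bisections implementing $\sim$, group ranges by the label $m_{i,j}$ and invoke Remark~\ref{Remark:SetInclusionImpliesInequalityInTypeSemigroup} for one direction; and for the other, discard the bisections whose domain labels land in the slack summand and read off conditions (1) and (2) from the remaining partition. The bookkeeping points you flag (re-indexing and handling the summand $z$) are exactly the ones the paper handles by renumbering the $W_i$.
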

	\begin{proof}
		Suppose $k[A]\leq l[A]$. Then there exists $B=\bigcup_{j=1}^m B_j\times \lbrace j\rbrace$ such that $k[A]+[B]=l[A]$. Thus we have
		$$\bigcup\limits_{i=1}^k A\times\lbrace i\rbrace \cup \bigcup\limits_{j=1}^m B_j\times \lbrace k+j\rbrace \sim \bigcup\limits_{i=1}^l A\times \lbrace i\rbrace$$
		It follows, that there exists $r\in\N$, $W_1,\ldots, W_r\in G^a$ and $n_1,\ldots, n_r\in\lbrace 1,\ldots, k+m\rbrace$, $m_1,\ldots, m_r\in \lbrace 1,\ldots, l\rbrace$ such that
		$$\bigcup\limits_{i=1}^l A\times\lbrace i\rbrace=\bigsqcup\limits_{i=1}^r r(W_i)\times \lbrace m_i\rbrace$$
		and $$\bigcup\limits_{i=1}^k A\times \lbrace i\rbrace\cup \bigcup\limits_{j=1}^m B_j\times \lbrace k+j\rbrace=\bigsqcup\limits_{i=1}^r d(W_i)\times \lbrace n_i\rbrace$$
		The first equality entails that the sets $r(W_i)\times \lbrace m_i\rbrace\subseteq A\times \lbrace 1,\ldots, l\rbrace$ are pairwise disjoint. From the second equality we see that we can write
		$$A=\bigcup\limits_{\lbrace i \mid n_i=1\rbrace} d(W_i)=\ldots =\bigcup\limits_{\lbrace i \mid n_i=k\rbrace}d(W_i).$$
		Thus, by renumbering the $W_i$ appropriately, we obtain a $(G^a,k,l)$-paradoxical decomposition of $A$.

		Assume conversely, that $A$ is $(G^a,k,l)$-paradoxical. Then there exist numbers $n_i$ for each $1\leq i\leq k$, bisections $V_{i,j}\in G^a$ and numbers $m_{i,j}\in \lbrace 1,\ldots l\rbrace$ such that $A=\bigsqcup_{j=1}^{n_i}d(V_{i,j})$ and $r(V_{i,j})\times\lbrace m_{i,j}\rbrace\subseteq A\times \lbrace 1,\ldots, l\rbrace$ pairwise disjoint.
		We then have
		$$\bigcup\limits_{i=1}^k A\times\lbrace i\rbrace = \bigcup\limits_{i=1}^k \bigcup\limits_{j=1}^{n_i}d(V_{i,j})\times\lbrace i\rbrace\sim \bigcup\limits_{i=1}^k \bigcup\limits_{j=1}^{n_i} r(V_{i,j})\times\lbrace m_{i,j}\rbrace
		= \bigcup\limits_{p=1}^l A_p\times \lbrace p\rbrace, $$
		where $A_p=\bigsqcup_{\lbrace (i,j)\mid m_{i,j}=p\rbrace}r(V_{i,j})\subseteq A$.
		Applying Remark \ref{Remark:SetInclusionImpliesInequalityInTypeSemigroup} we conclude $$k[A]=[\bigcup\limits_{i=1}^k A\times\lbrace i\rbrace]=[\bigcup\limits_{p=1}^l A_p\times\lbrace p\rbrace]=\sum\limits_{p=1}^l [A_p] \leq l[A].$$
	\end{proof}
	
	\begin{rem} In Remark \ref{Remark:paradoxical} (1) we have seen that if a compact open subset $A\subseteq G^{(0)}$ is $(G^a,k,l)$-paradoxical, then it is $(G^a,k',l')$-paradoxical for all $k\geq k'>l'\geq l>0$.  
		From the characterization of a paradoxical decomposition in terms of $S(G,G^a)$ it is easy to see that $k'$ may in fact be chosen bigger than $k$, i.e. $A$ admits a $(G^a,k',l')$-paradoxical decomposition for all $k'>l'\geq l>0$.
	\end{rem}

\begin{prop}\label{Prop:K-theoryTypeSemigroup}
	Let $G$ be an ample groupoid with $G^{(0)}$ compact. Then there exists a faithful, order preserving, surjective monoid homomorphism
	$$\rho:C(G^{(0)},\Z)^+\rightarrow S(G,G^a)$$
	such that for all $S\in G^a$ and $f\in C(G^{(0)},\Z)^+$ with $supp(f)\subseteq r(S)$ we have $\rho(f)=\rho(f\circ \alpha_S)$, where $\alpha_S:d(S)\rightarrow r(S)$ is the canonical homeomorphism associated to the bisection $S\in G^a$.
\end{prop}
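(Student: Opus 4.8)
The plan is to realise $\rho$ via decompositions of functions into indicator functions of compact open sets. Since $G$ is ample and $G^{(0)}$ is compact, $G^{(0)}$ is compact Hausdorff and totally disconnected, so its compact open subsets form a Boolean algebra and every $f\in C(G^{(0)},\Z)^+$ is a \emph{finite} sum $f=\sum_{i=1}^n 1_{A_i}$ with each $A_i\subseteq G^{(0)}$ compact open (for instance $A_i=f^{-1}([i,\infty))$ with $n=\|f\|_\infty<\infty$, using compactness of $G^{(0)}$). I would then \emph{define} $\rho(f):=\bigl[\,\bigcup_{i=1}^n A_i\times\{i\}\,\bigr]\in S(G,G^a)$, and the only genuinely non-trivial point is that this does not depend on the chosen representation $f=\sum_i 1_{A_i}$; everything else follows quickly from this.

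For well-definedness, suppose $\sum_{i=1}^n 1_{A_i}=\sum_{j=1}^m 1_{B_j}$. Pass to the finite Boolean algebra of compact open subsets of $G^{(0)}$ generated by $A_1,\dots,A_n,B_1,\dots,B_m$, and let $\mathrm{At}$ denote its finite set of atoms (each a nonempty compact open set). On each atom $a$ the function $f:=\sum_i 1_{A_i}$ is constant, say with value $k(a)$; if $I(a):=\{i: a\subseteq A_i\}$ and $J(a):=\{j: a\subseteq B_j\}$, then $|I(a)|=|J(a)|=k(a)$, so we may enumerate $I(a)=\{i_1(a),\dots,i_{k(a)}(a)\}$ and $J(a)=\{j_1(a),\dots,j_{k(a)}(a)\}$. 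For each atom $a$ and each $1\le t\le k(a)$, let $W_{a,t}:=a\in G^a$ be the identity bisection on $a$, with its domain placed at level $i_t(a)$ and its range at level $j_t(a)$. Since distinct atoms are disjoint and, for a fixed atom, the indices $i_t(a)$ (respectively $j_t(a)$) are pairwise distinct, one checks directly that $\bigcup_i A_i\times\{i\}=\bigsqcup_{a,t} d(W_{a,t})\times\{i_t(a)\}$ and $\bigcup_j B_j\times\{j\}=\bigsqcup_{a,t} r(W_{a,t})\times\{j_t(a)\}$, which is exactly the relation $\bigcup_i A_i\times\{i\}\sim\bigcup_j B_j\times\{j\}$. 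I expect this to be the main obstacle: the domain and range levels must be arranged carefully so that the two families occurring in the definition of $\sim$ really are disjoint.

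The remaining properties are then routine. The map $\rho$ is a \emph{monoid homomorphism}: $\rho(0)=[\emptyset]=0$, and concatenating representations of $f$ and $g$ shows $\rho(f+g)=\rho(f)+\rho(g)$ directly from the definition of the addition in $S(G,G^a)$. It is \emph{surjective}, since any element of $S(G,G^a)$ has the form $\bigl[\bigcup_{i=1}^n A_i\times\{i\}\bigr]=\rho\bigl(\sum_i 1_{A_i}\bigr)$. It is \emph{order preserving}, because $f\le g$ in $C(G^{(0)},\Z)^+$ means $g=f+h$ with $h\in C(G^{(0)},\Z)^+$, whence $\rho(g)=\rho(f)+\rho(h)\ge\rho(f)$. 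And it is \emph{faithful}: if $\rho(f)=0$, then $\bigcup_i A_i\times\{i\}\sim\emptyset$ via some bisections $W_1,\dots,W_l\in G^a$, and $\emptyset=\bigsqcup_k r(W_k)\times\{m_k\}$ forces each $r(W_k)=\emptyset$, hence each $W_k=\emptyset$, hence $\bigcup_i A_i\times\{i\}=\bigsqcup_k d(W_k)\times\{n_k\}=\emptyset$ and $f=0$.

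It remains to verify the invariance property. Given $S\in G^a$ and $f\in C(G^{(0)},\Z)^+$ with $\supp f\subseteq r(S)$, write $f=\sum_i 1_{A_i}$ with $A_i:=f^{-1}([i,\infty))\subseteq\supp f\subseteq r(S)$. For each $i$ set $W_i:=(r|_S)^{-1}(A_i)\subseteq S$; since $r|_S\colon S\to r(S)$ is a homeomorphism and $A_i$ is compact open, $W_i$ is a compact open bisection with $r(W_i)=A_i$, and from $r(g)=\alpha_S(d(g))$ for $g\in S$ one gets $d(W_i)=\alpha_S^{-1}(A_i)$. Hence $[\alpha_S^{-1}(A_i)]=[A_i]$ in $S(G,G^a)$ for every $i$. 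Since $f\circ\alpha_S=\sum_i 1_{\alpha_S^{-1}(A_i)}$ (extended by zero off $d(S)$), we conclude $\rho(f\circ\alpha_S)=\sum_i[\alpha_S^{-1}(A_i)]=\sum_i[A_i]=\rho(f)$, as required.
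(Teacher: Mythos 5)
Your proof is correct and follows essentially the same route as the paper: decompose $f$ as a finite sum of indicators of compact open sets, establish well-definedness via a common clopen refinement (your atoms of the generated Boolean algebra are exactly the paper's common refinement, with the multiplicities matched directly rather than through an intermediate canonical form), and prove invariance using the bisections $(r|_S)^{-1}(A_i)$ exactly as the paper does. No gaps.
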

\begin{proof}
	Given $f\in C(G^{(0)},\Z)^+$ there exist clopen subsets $A_i\subseteq G^{(0)}$ such that $f=\sum_{i=1}^n 1_{A_i}$. Of course such a representation will not be unique in general but if $f= \sum_{i=1}^n 1_{A_i}=\sum_{i=1}^m 1_{B_i}$, then $$\bigcup\limits_{i=1}^n A_i\times \lbrace i\rbrace\sim \bigcup\limits_{i=1}^m B_i\times \lbrace i\rbrace.$$
	Indeed, let $(C_j)_{j=1}^l$ be a common clopen refinement of the families $(A_i)_i$ and $(B_i)_i$, i.e. each $A_i$ and each $B_i$ is a union of pairwise disjoint $C_j$. If we put $n_j=\abs{\lbrace i\mid C_j\subseteq A_i\rbrace}=\abs{\lbrace i\mid C_j\subseteq B_i\rbrace}$, then $f=\sum_{j=1}^l n_j 1_{C_j}$. Let $D_{j,i}:= C_j$ if $C_j\subseteq A_i$ and the empty set otherwise. We get $\bigsqcup_{j=1}^l D_{j,i}=A_i$ for every $1\leq i\leq n$. Thus
	$$ \bigcup\limits_{i=1}^n A_i\times\lbrace i\rbrace
	=\bigcup\limits_{i=1}^n\bigsqcup\limits_{j=1}^l D_{j,i}\times \lbrace i\rbrace
	=\bigsqcup\limits_{j=1}^l\bigsqcup\limits_{\lbrace i\mid C_j\subseteq A_i\rbrace} C_j\times \lbrace i\rbrace
	\sim \bigsqcup\limits_{j=1}^l\bigsqcup\limits_{i=1}^{n_j} C_j\times \lbrace i\rbrace=:C $$
	The same argument shows that $\bigcup_{i=1}^m B_i\times \lbrace i\rbrace\sim C$ and hence our claim follows by transitivity.
	Consequently for $f=\sum_{i=1}^n 1_{A_i}$ it makes sense to define $$\rho(f):=\left[\bigcup\limits_{i=1}^n A_i\times\lbrace i\rbrace\right].$$
	It is straightforward to check, that $\rho$ is a surjective monoid homomorphism.
	To see the faithfulness, it is enough to note that if
	$\bigcup_{i=1}^n A_i\times \lbrace i\rbrace\sim \emptyset\times \lbrace 1\rbrace$, then also $A_i=\emptyset$ for all $1\leq i\leq n$.
	Finally, let $S\in G^a$ and $f\in C(G^{(0)},\Z)^+$ with $supp(f)\subseteq r(S)$. Then we can write $f=\sum_{i=1}^n 1_{A_i}$ for compact open subsets $A_i\subseteq r(S)$. Let $S_i:=r_{\mid S}^{-1}(A_i)\subseteq S$. Then all the $S_i\in G^a$ with $r(S_i)=A_i$ by construction. It is not hard to see that $f\circ \alpha_S=\sum_{i=1}^n 1_{d(S_i)}$.
	On the other hand it is obvious from the construction that $\bigcup_{i=1}^n A_i\times\lbrace i\rbrace\sim \bigcup_{i=1}^n d(S_i)\times\lbrace i\rbrace$. Hence we have established $\rho(f)=\rho(f\circ \alpha_S)$.
\end{proof}
A state on a preordered abelian monoid $S$ is an additive, order preserving map $\varphi:S\rightarrow [0,\infty]$ with $f(0)=0$. We say that $\varphi$ is \textit{trivial} if $\varphi(x)\in\lbrace 0,\infty\rbrace$ for all $x\in S$ and \textit{non-trivial} otherwise.

We will first see how in the minimal setting non-trivial states on the monoid $S(G,G^a)$ give rise to traces on $C_r^*(G)$. In order to do this we need some more preparation:\\
Recall that an element $u$ in an abelian monoid $S$ is called an \textit{order unit} if $u\neq 0$ and for every element $x\in S$ there exists an $n\in\N$ such that $x\leq nu$. The monoid $S$ is called \textit{simple} provided that every non-zero element of $S$ is an order unit. If $S$ is a simple monoid, then every non-trivial state $\varphi:S\rightarrow [0,\infty]$  is faithful and $\varphi(x)<\infty$ for all $x\in S$: Suppose there exists a $0\neq u\in S$ with $0<\varphi(u)<\infty$. Let $x\in S$ be any non-zero element. Then $u$ and $x$ must be order units by simplicity of $S$. Hence there exist $n,m\in\N$ such that $x\leq nu$ and $u\leq mx$. Applying the state gives us $\varphi(x)\leq n\varphi(u)<\infty$ and $0<\varphi(u)\leq m\varphi(x)$ and thus $0<\varphi(x)<\infty$. As $0\neq x$ was arbitrary the claim follows.\\
It is not surprising that minimality of the groupoid is sufficient to ensure simplicity of the type semigroup, which is the content of the following lemma.
\begin{lem}\label{Lemma:Simple Type Semigroup}
	Let $G$ be an ample groupoid. If $G$ is minimal, then $S(G,G^a)$ is simple.
\end{lem}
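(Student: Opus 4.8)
The plan is to verify the definition of simplicity directly: show that every non-zero element of $S(G,G^a)$ is an order unit. First I would carry out a reduction. Any non-zero $x\in S(G,G^a)$ can be written as $x=\sum_{i=1}^n[A_i]$ with each $A_i\subseteq G^{(0)}$ compact open, and since $x\neq 0$ at least one $A_i$, say $A$, is non-empty, so that $[A]\le x$ in the algebraic preorder. Similarly any $y\in S(G,G^a)$ is of the form $y=\sum_{j=1}^m[B_j]$ with each $B_j\subseteq G^{(0)}$ compact open. Because the preorder is compatible with addition, it is enough to prove the single-set statement: for every non-empty compact open $A\subseteq G^{(0)}$ and every compact open $B\subseteq G^{(0)}$ there is $N\in\N$ with $[B]\le N[A]$; indeed, then $[B_j]\le N_j[A]$ for each $j$ yields $y\le(\sum_j N_j)[A]\le(\sum_j N_j)x$, so $x$ is an order unit.

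For the single-set statement I would invoke minimality. Since $A$ is open and non-empty, its saturation $GA=r(d^{-1}(A))=\lbrace r(g)\mid d(g)\in A\rbrace$ is a non-empty open invariant subset of $G^{(0)}$ (open because $r$ is an open map on an �tale groupoid); its complement is then a proper closed invariant subset, hence empty by minimality, so $GA=G^{(0)}$. Therefore for each $u\in G^{(0)}$ there is $g\in G$ with $d(g)\in A$ and $r(g)=u$, and using ampleness of $G$ together with continuity of $d$ I can choose a compact open bisection $V_u$ with $g\in V_u$ and $d(V_u)\subseteq A$; then $r(V_u)$ is a compact open neighbourhood of $u$. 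The family $\lbrace r(V_u)\rbrace_{u\in G^{(0)}}$ covers $G^{(0)}$, so by compactness of $B$ finitely many of them, say $r(V_{u_1}),\dots,r(V_{u_N})$, already cover $B$.

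It then remains to translate this covering into an inequality in the type semigroup. Disjointifying, $B=\bigsqcup_{k=1}^N B_k$ with $B_k:=(B\cap r(V_{u_k}))\setminus\bigcup_{l<k}r(V_{u_l})$ compact open and contained in $r(V_{u_k})$, so Remark~\ref{Remark:SetInclusionImpliesInequalityInTypeSemigroup} gives $[B]=\sum_k[B_k]\le\sum_k[r(V_{u_k})]$. For each $k$, taking the single bisection $W:=V_{u_k}$ in the definition of $\sim$ shows $[r(V_{u_k})]=[d(V_{u_k})]$, and $d(V_{u_k})\subseteq A$ gives $[d(V_{u_k})]\le[A]$ again by Remark~\ref{Remark:SetInclusionImpliesInequalityInTypeSemigroup}. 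Hence $[B]\le N[A]$, as desired.

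The argument is essentially routine once the reduction is in place; the only step that needs a little care is that reduction — peeling off one non-empty summand to serve as a candidate order unit and then treating an arbitrary element of $S(G,G^a)$ summand by summand — while the geometric input (covering a compact set by pieces each dominated by $A$) is an immediate consequence of minimality and ampleness. I do not anticipate any genuine obstacle.
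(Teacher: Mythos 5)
Your proof is correct and follows essentially the same route as the paper: both reduce to showing $[B]\le N[A]$ for a single non-empty compact open $A$, use minimality to get $GA=G^{(0)}$, cover the compact set $B$ by finitely many pieces carried into $A$ by compact open bisections, disjointify, and apply the relations $[d(W)]=[r(W)]$ and monotonicity under inclusion. The only (inessential) differences are that you cover $B$ by ranges rather than domains of the bisections and that you peel off a non-empty summand at the start rather than treating general $x$ at the end.
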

\begin{proof} Fix $0\neq x\in S(G,G^a)$.
	We have to show, that $x$ is an order unit. Let us first assume that $x=[A]$, where $A\subseteq G^{(0)}$ is a single compact open subset.
	Consider $y=[B]$ for another compact open subset $B$ of $G^{(0)}$. Since $G$ is minimal and $A\neq \emptyset$ we have $GA=G^{(0)}$. In particular $B\subseteq GA$. As $B$ is compact there exist finitely many bisections $S_1,\ldots S_n\in G^a$ such that
	$$B\subseteq \bigcup\limits_{i=1}^n d(S_i),\text{ and } r(S_i)\subseteq A.$$
	By passing to a refinement if necessary we may assume that the sets $d(S_i)$ are pairwise disjoint. But then
	$$B\times\lbrace 1\rbrace \subseteq \bigsqcup\limits_{i=1}^n d(S_i)\times\lbrace 1\rbrace \sim \bigsqcup\limits_{i=1}^n r(S_i)\times\lbrace i\rbrace \subseteq \bigcup\limits_{i=1}^n A\times \lbrace i\rbrace,$$
	and hence $[B]\leq n[A]$. Now if $y\in S(G,G^a)$ is arbitrary, then $y=[\bigcup_{j=1}^m B_j\times\lbrace j\rbrace]$. By the above argument for each $1\leq j\leq m$ there exist $n_j\in\N$ such that $[B_j]\leq n_j[A]$ and hence we get $y=\sum_{j=1}^m [B_j]\leq (\sum_{j=1}^m n_j)[A]$. Thus we have shown that $x=[A]$ is an order unit.\\
	Finally suppose that $0\neq x=[\bigcup_{i=1}^n A_i\times\lbrace i\rbrace]$ is arbitrary and $y\in S(G,G^a)$. Then for each $1\leq i\leq n$ we can find a number $k_i\in\N$ such that $y\leq k_i[A_i]$. Let $k:=\max\lbrace k_1,\ldots k_n\rbrace$. Then we have $y\leq \sum_{i=1}^nk_i[A_i]\leq k\sum_{i=1}^n [A_i]=kx$ which finishes the proof.
\end{proof}

We are now ready to see how states on $S(G,G^a)$ give rise to traces on the reduced groupoid $C^*$-algebra.
\begin{lem}\label{Lemma:faithful states on type sgrp lift to traces}
Let $G$ be an ample minimal groupoid with compact unit space. Suppose $\varphi:S(G,G^a)\rightarrow [0,\infty)$ is a faithful state with $\varphi([G^{(0)}])=1$. Then $\varphi$ lifts to a faithful tracial state $\tau:C_r^*(G)\rightarrow \C$ with $\tau(1_C)=\varphi([C])$ for all clopen subsets $C\subseteq G^{(0)}$.
\end{lem}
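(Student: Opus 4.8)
The plan is to realize $\tau$ in the form $\tau=\hat\mu\circ E$, where $E\colon C_r^*(G)\to C(G^{(0)})$ is the canonical faithful conditional expectation from Section~2 and $\hat\mu$ is a suitable $G$-invariant state on $C(G^{(0)})$ built out of $\varphi$. First I would compose $\varphi$ with the monoid homomorphism $\rho\colon C(G^{(0)},\Z)^+\to S(G,G^a)$ of Proposition~\ref{Prop:K-theoryTypeSemigroup} to obtain an additive, order-preserving map $\mu:=\varphi\circ\rho\colon C(G^{(0)},\Z)^+\to[0,\infty)$ with $\mu(1_{G^{(0)}})=\varphi([G^{(0)}])=1$ and the invariance property $\mu(f)=\mu(f\circ\alpha_S)$ whenever $S\in G^a$ and $\supp(f)\subseteq r(S)$. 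Since $0\le f\le\|f\|_\infty\cdot 1_{G^{(0)}}$ forces $\mu(f)\le\|f\|_\infty$, the map $\mu$ is bounded; extending it first to the group $C(G^{(0)},\Z)$, then $\Q$-linearly to the locally constant $\Q$-valued functions, and finally by continuity (these are dense in $C(G^{(0)},\R)$ because $G^{(0)}$ is compact and totally disconnected) followed by complexification, I obtain a state $\hat\mu$ on $C(G^{(0)})$. The invariance above says exactly that the Borel probability measure on $G^{(0)}$ represented by $\hat\mu$ is $G$-invariant, i.e.\ $\hat\mu(1_{d(S)})=\hat\mu(1_{r(S)})$ for all $S\in G^a$; and since $\rho$ and $\varphi$ are faithful, $\hat\mu(1_C)=\varphi([C])>0$ for every non-empty clopen $C\subseteq G^{(0)}$, so $\hat\mu$ is faithful on $C(G^{(0)})^+$.

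Next I would set $\tau:=\hat\mu\circ E$. Positivity of $E$ and of $\hat\mu$ makes $\tau$ a positive functional, and $\tau(1)=\hat\mu(E(1))=\hat\mu(1)=1$, so $\tau$ is a state, with $\tau(1_C)=\hat\mu(1_C)=\varphi([C])$ for clopen $C$ by construction. The core of the argument is the trace identity $\tau(ab)=\tau(ba)$. By continuity of $E$, of $\hat\mu$, and of multiplication, it suffices to verify this for $a,b$ in the dense $*$-subalgebra $C_c(G)$, and by linearity together with ampleness it is enough to take $a\in C_c(S)$, $b\in C_c(T)$ with $S,T\in G^a$ compact open bisections. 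Here I would introduce the compact open bisection $W:=S\cap T^{-1}$ and check by a direct computation with the convolution formula that $E(a\ast b)$ is supported in $r(W)$, that $E(b\ast a)$ is supported in $d(W)$, and that $E(a\ast b)(r(w))=a(w)\,b(w^{-1})=E(b\ast a)(d(w))$ for every $w\in W$; that is, $E(a\ast b)=E(b\ast a)\circ\alpha_{W^{-1}}$. Applying the invariance of $\hat\mu$ to the bisection $W^{-1}$ then yields $\tau(a\ast b)=\hat\mu(E(a\ast b))=\hat\mu(E(b\ast a))=\tau(b\ast a)$.

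Finally, for faithfulness: if $\tau(x^*x)=\hat\mu(E(x^*x))=0$ for some $x\in C_r^*(G)$, then since $E(x^*x)$ is a positive element of $C(G^{(0)})$ and $\hat\mu$ is faithful on $C(G^{(0)})^+$ we get $E(x^*x)=0$, and faithfulness of $E$ (Section~2) gives $x=0$. I expect the main obstacle to be the bookkeeping in the trace identity, namely pinning down $W=S\cap T^{-1}$ as the common carrier of $E(a\ast b)$ and of $E(b\ast a)\circ\alpha_W$ and matching the coefficients; everything else is routine. (Note that minimality of $G$ is not actually used in this lemma; it enters only via Lemma~\ref{Lemma:Simple Type Semigroup} when one wants to guarantee that such faithful states $\varphi$ exist.)
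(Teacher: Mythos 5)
Your proposal is correct and follows the same overall strategy as the paper: compose $\varphi$ with the homomorphism $\rho$ of Proposition~\ref{Prop:K-theoryTypeSemigroup}, extend the resulting invariant map on $C(G^{(0)},\Z)^+$ to a $G$-invariant state on $C(G^{(0)})$, and then set $\tau=\hat\mu\circ E$, with faithfulness coming from faithfulness of $E$ and of $\hat\mu$. Where you differ is that you replace the paper's two external citations by self-contained arguments. First, the paper identifies $C(G^{(0)},\Z)$ with $K_0(C(G^{(0)}))$ and invokes Blackadar--R\o{}rdam to lift the state to $C(G^{(0)})$, whereas you extend by hand ($\Q$-linear extension to locally constant $\Q$-valued functions, boundedness from $0\leq f\leq\|f\|_\infty 1_{G^{(0)}}$ and monotonicity, then density in $C(G^{(0)},\R)$ by total disconnectedness); this is valid and more elementary, at the cost of being special to the totally disconnected commutative setting. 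Second, the paper cites \cite[Lemma 4.2]{LR17} for the implication ``$G$-invariance of $\tau_0$ implies the trace property of $\tau_0\circ E$,'' whereas you verify it directly: your bookkeeping with $W=S\cap T^{-1}$ is exactly right, since for $a\in C_c(S)$, $b\in C_c(T)$ one gets $E(a\ast b)$ supported in $r(W)$ with $E(a\ast b)(r(w))=a(w)b(w^{-1})=E(b\ast a)(d(w))$, and the invariance of $\hat\mu$ (extended from indicator functions to all continuous functions supported in a fixed $d(W)$ by density, as the paper also does) closes the argument; the reduction to functions supported on compact open bisections is legitimate because $G$ is ample. Your parenthetical remark is also accurate: minimality is not used in this lemma, only in guaranteeing (via simplicity of $S(G,G^a)$, Lemma~\ref{Lemma:Simple Type Semigroup}) that faithful finite states exist when some non-trivial state does.
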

\begin{proof}
Consider the composition $$\tilde{\tau}:=\varphi\circ \rho:C(G^{(0)},\Z)^+\rightarrow [0,\infty),$$
where $\rho:C(G^{(0)},\Z)^+\rightarrow S(G,G^a)$ is the invariant, faithful surjective monoid homomorphism provided by Proposition \ref{Prop:K-theoryTypeSemigroup}. Note that $\tilde{\tau}$ is an additive, order preserving map since $\rho$ and $\varphi$ have these properties. Furthermore $\tilde{\tau}$ is faithful and inherits the $G$-invariance property from $\rho$ meaning
that for all $S\in G^a$ and $f\in C(G^{(0)},\Z)^+$ with $supp(f)\subseteq r(S)$ we have $\tilde{\tau}(f)=\tilde{\tau}(f\circ \alpha_S)$, where $\alpha_S:d(S)\rightarrow r(S)$ is the canonical homeomorphism associated to the bisection $S\in G^a$. Since every element $f\in C(G^{(0)},\Z)$ can be written as a difference $f=f_1-f_2$ with $f_1,f_2\in C(G^{(0)},\Z)$ we can extend $\tilde{\tau}$ to a well-defined $G$-invariant homomorphism $C(G^{(0)})\rightarrow \R$ by $\tilde{\tau}(f)=\tilde{\tau}(f_1)-\tilde{\tau}(f_2)$. Indeed, if $f_1-f_2=f_1'-f_2'$, then $f_1+f_2'=f_1'+f_2$ and hence $\tilde{\tau}(f_1)+\tilde{\tau}(f_2')=\tilde{\tau}(f_1')+\tilde{\tau}(f_2)$ by the additivity of $\tilde{\tau}$. Hence $\tilde{\tau}(f_1)-\tilde{\tau}(f_2)=\tilde{\tau}(f_1')-\tilde{\tau}(f_2')$ as desired.\\
It is a well-known fact that $$(K_0(C(G^{(0)})),K_0(C(G^{(0)}))^+, [1]_0)\cong (C(G^{(0)},\Z),C(G^{(0)},\Z)^+,1_{G^{(0)}}).$$
Making this identification we have constructed a state $\tilde{\tau}$ on $K_0(C(G^{(0)}))$. Applying \cite[Theorem 3.3]{BR92} we conclude that $\tilde{\tau}$ lifts to a state $\tau_0:C(G^{(0)})\rightarrow\C$ such that $\tau_0(1_C)=\varphi([C])$ for all clopen subsets $C\subseteq G^{(0)}$. It is clear from the construction that $\tau_0(1_C\circ\alpha_S)=\tau_0(1_C)$ for all clopen subsets $C\subseteq G^{(0)}$ with $C\subseteq r(S)$. Since the linear span of all characteristic functions supported on clopen subsets of $G^{(0)}$ form a dense subspace of $C(G^{(0)})$ we obtain
$\tau_0(f\circ \alpha_S)=\tau_0(f)$ for all $f\in C(G^{(0)})$ with $supp(f)\subseteq r(S)$ for some $S\in G^a$ by continuity of $\tau_0$.
Finally let $\tau:=\tau_0\circ E$, where $E:C_r^*(G)\rightarrow C(G^{(0)})$ is the canonical faithful conditional expectation. It is clear, that $\tau$ is a faithful state and the trace property follows from the $G$-invariance of $\tau_0$ (see \cite[Lemma 4.2]{LR17} for a recent proof of this fact).
\end{proof}

Recall that a preordered monoid $S$ is called \textit{almost unperforated} if whenever $x,y\in S$ and $n\in\N$ satisfy $(n+1)x\leq ny$, then $x\leq y$.

\begin{thm}\label{purely inf}
	Let $G$ be an ample groupoid with compact unit space which is minimal and topologically principal. Consider the following assertions:
	\begin{enumerate}
		\item Every clopen subset of $G^{(0)}$ is $(G^a,2,1)$-paradoxical.
		\item $C_r^*(G)$ is (strongly) purely infinite.
		\item $C_r^*(G)$ is traceless (i.e., every lower-semicontinuous 2-quasitrace is trivial).
		\item $C_r^*(G)$ is not stably finite.
		\item Every state on the type semigroup $S(G,G^a)$ is trivial.
	\end{enumerate}
	Then we have $(1)\Rightarrow(2)\Rightarrow(3)\Rightarrow(4)\Rightarrow (5)$.
	If additionally $S(G,G^a)$ is almost unperforated, then $(5)\Rightarrow (1)$ making all of these properties equivalent.
\end{thm}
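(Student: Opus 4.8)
We first record the consequences of the hypotheses that will be used repeatedly. Since $G$ is minimal and topologically principal, $C_r^*(G)$ is simple by Corollary~\ref{simple}, and it is unital because $G^{(0)}$ is compact; moreover $G$ is inner exact (every minimal groupoid is) and, in the minimal case, topological principality coincides with essential principality, so $G$ satisfies the hypotheses of Theorem~\ref{Thm:CharacterizationPurelyInfinite} and of Corollary~\ref{Cor: Every set is paradoxical implies purely infinite}. Finally, $S(G,G^a)$ is a simple monoid by Lemma~\ref{Lemma:Simple Type Semigroup}. Granting all this, $(1)\Rightarrow(2)$ is immediate: the clopen subsets of $G^{(0)}$ form a basis of compact open sets because $G^{(0)}$ is totally disconnected, so if each of them is $(G^a,2,1)$-paradoxical, then $C_r^*(G)$ is (strongly) purely infinite by Corollary~\ref{Cor: Every set is paradoxical implies purely infinite}.

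The two middle implications are soft facts about $C^*$-algebras, proved exactly as in the crossed-product case of \cite[Theorem~5.6]{MR3669119}. For $(2)\Rightarrow(3)$: a simple unital purely infinite $C^*$-algebra has $1$ properly infinite (see \cite[Theorem~4.16]{KR00} together with the definition of a properly infinite projection), so there are orthogonal projections $p,q\le 1$ with $p\sim q\sim 1$; for any lower-semicontinuous $2$-quasitrace $\tau$ we then get $\tau(1)\ge\tau(p)+\tau(q)=\tau(1)+\tau(1)=2\tau(1)$, so $\tau(1)\in\{0,\infty\}$, and simplicity of $C_r^*(G)$ forces $\tau$ to be trivial. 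For $(3)\Rightarrow(4)$ we argue contrapositively: a unital stably finite $C^*$-algebra carries a $2$-quasitrace $\tau$ normalized by $\tau(1)=1$, by the dimension-function theorem of Blackadar and Handelman, and this $\tau$ is a non-trivial lower-semicontinuous $2$-quasitrace.

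For $(4)\Rightarrow(5)$ I again argue contrapositively. Let $\varphi$ be a non-trivial state on $S(G,G^a)$. Because $S(G,G^a)$ is simple, $\varphi$ is automatically faithful and takes only finite values (see the discussion preceding Lemma~\ref{Lemma:Simple Type Semigroup}); in particular $\varphi([G^{(0)}])\in(0,\infty)$, so after rescaling we may assume $\varphi([G^{(0)}])=1$. Then Lemma~\ref{Lemma:faithful states on type sgrp lift to traces} lifts $\varphi$ to a faithful tracial state on $C_r^*(G)$, and a $C^*$-algebra admitting a faithful tracial state is stably finite. Hence $C_r^*(G)$ is not as in $(4)$.

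Finally, assume $S(G,G^a)$ is almost unperforated; I prove $(5)\Rightarrow(1)$. Fix a non-empty clopen set $A\subseteq G^{(0)}$ and put $x:=[A]\in S(G,G^a)$. If $(n+1)x\not\le nx$ held for all $n$, then Tarski's Theorem~\ref{Thm:ExistenceOfStatesOnTypeSemigroup} would supply an additive map $f\colon S(G,G^a)\to[0,\infty]$ with $f(x)=1$; but $f(x)=1$ forces $f(0)=0$ (since $f(0)=2f(0)$ and $f(x)=f(x)+f(0)$), so $f$ is a state, and it is non-trivial because $f(x)=1\notin\{0,\infty\}$, contradicting $(5)$. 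Therefore $(n+1)x\le nx$ for some $n$, and a straightforward induction then yields $Mx\le nx$ for all $M\ge 0$; in particular $(n+1)(2x)=(2n+2)x\le nx=n\cdot x$. Applying almost unperforation to the pair $2x,x$ with this $n$ gives $2x\le x$, that is, $2[A]\le[A]$, which by Lemma~\ref{Lem:ParadoxicalityInTermsOfTypeSemigrp} says precisely that $A$ is $(G^a,2,1)$-paradoxical. As $A$ was an arbitrary non-empty clopen set, $(1)$ follows. The main obstacle lies in this last step: one must make sure that Tarski's dichotomy genuinely produces a \emph{state} rather than merely an additive map (hence the need to check $f(0)=0$) and that almost unperforation is potent enough to compress an arbitrary ``$(n+1)$-versus-$n$'' paradox into a ``$2$-versus-$1$'' paradox; the observation that makes the latter work is the elementary remark that $(n+1)x\le nx$ already forces $Mx\le nx$ for every $M$. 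The remaining implications reduce to results established earlier in the paper together with standard facts about quasitraces.
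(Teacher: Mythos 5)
Your proof is correct and follows essentially the same route as the paper's: $(1)\Rightarrow(2)$ via Corollary~\ref{Cor: Every set is paradoxical implies purely infinite} after checking that minimality gives inner exactness and essential principality, $(4)\Rightarrow(5)$ by the contrapositive using simplicity of $S(G,G^a)$ and Lemma~\ref{Lemma:faithful states on type sgrp lift to traces}, and $(5)\Rightarrow(1)$ by Tarski's theorem plus the observation that $(n+1)(2[A])\le n[A]$ lets almost unperforation compress the paradox to $2[A]\le[A]$, exactly as in the paper. The only (harmless) difference is that for $(2)\Rightarrow(3)$ and $(3)\Rightarrow(4)$ you write out the standard arguments (properly infinite unit kills quasitraces; Blackadar--Handelman produces a normalized lower-semicontinuous $2$-quasitrace in the stably finite case) where the paper simply cites Blanchard--Kirchberg \cite{KR04}.
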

\begin{proof}
	The implication $(1)\Rightarrow (2)$ is a special case of Corollary~\ref{Cor: Every set is paradoxical implies purely infinite}. The implication $(2)\Rightarrow (3)$ holds for all $C^*$-algebras (see \cite[Proposition~4.1 (ii) and Proposition~4.11]{KR04}). For $(3)\Rightarrow (4)$ follows from \cite[Remark~2.27 (viii)]{KR04}, which states that a simple $C^*$-algebra $B$ is stably finite if and only if there exists a faithful semi-finite lower-semicontinuous 2-quasitrace on $B$. $(4)\Rightarrow (5)$: Suppose there exists a non-trivial state on $S(G,G^a)$. Then upon normalizing we can assume that it is faithful, takes only finite values and sends $[G^{(0)}]$ to $1$. Lemma \ref{Lemma:faithful states on type sgrp lift to traces} implies that $C_r^*(G)$ admits a faithful tracial state. Hence it must be stably finite. A contradiction.
	
	Finally assume that $S(G,G^a)$ is almost unperforated. Now in the situation of $(5)$ for every clopen subset $A$ of $G^{(0)}$ there must exist a number $n\in \N$ such that $(n+1)[A]\leq n[A]$ by Theorem \ref{Thm:ExistenceOfStatesOnTypeSemigroup}. But then also $2(n+1)[A]\leq n[A]$ and hence $2[A]\leq [A]$. Thus $A$ is $(G^a,2,1)$-paradoxical by Lemma \ref{Lem:ParadoxicalityInTermsOfTypeSemigrp}.
\end{proof}
\begin{rem}
It is still unclear to which extent the type semigroup is almost unperforated. It was noted in \cite[Remark 4.7]{MR3158244} that the implication $(5)\Rightarrow (1)$ also holds under the weaker comparability assumption that there exist integers $n,m\geq 2$ such that for all $x,y\in S(G,G^a)$, $nx\leq my$ implies $x\leq y$.\\
It has recently been shown by Ara and Exel in \cite{MR3144248} that there exists an action of a finitely generated free group $\mathbb{F}$ on a totally disconnected second countable Hausdorff compact space $X$ such that the type semigroup $S(X,\mathbb{F}, \mathbb{K})=S(\mathbb{F}\ltimes X,(\mathbb{F}\ltimes X)^a)$ is \emph{not} almost unperforated, where $\mathbb{K}$ denotes the algebra of compact open subsets of $X$.

\end{rem}
We obtain a dichotomy result from Corollary~\ref{simple} and Theorem~\ref{purely inf}:
\begin{cor}
Let $G$ be an ample groupoid with compact unit space which is minimal and topologically principal. If $S(G,G^a)$ is almost unperforated, then $C_r^*(G)$ is a simple $C^*$-algebra which is either stably finite or strongly purely infinite.
\end{cor}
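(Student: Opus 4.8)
The plan is to deduce the statement formally from the two results cited in it, namely Corollary~\ref{simple} and Theorem~\ref{purely inf}, both of which apply verbatim under the standing hypotheses.

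First I would establish simplicity. Since $G$ is topologically principal, the set $\lbrace u\in G^{(0)}\mid G_u^u=\lbrace u\rbrace\rbrace$ is dense in $G^{(0)}$, hence in particular non-empty, so there is some $u\in G^{(0)}$ with trivial isotropy. Combining this with minimality of $G$, Corollary~\ref{simple} gives that $C_r^*(G)$ is simple.

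Next I would invoke Theorem~\ref{purely inf} for the dichotomy. Its hypotheses --- $G$ ample with compact unit space, minimal and topologically principal, and $S(G,G^a)$ almost unperforated --- are exactly what we are assuming, so assertions $(1)$--$(5)$ there are all equivalent. In particular assertion $(2)$, that $C_r^*(G)$ is (strongly) purely infinite, is equivalent to assertion $(4)$, that $C_r^*(G)$ is not stably finite. Thus either $C_r^*(G)$ is stably finite, or else it fails to be stably finite, in which case $(4)$ holds and therefore $(2)$ holds, i.e.\ $C_r^*(G)$ is strongly purely infinite. This yields the desired dichotomy and finishes the argument.

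At the level of this corollary there is essentially no obstacle, as it is a formal consequence of material already in hand; the substantive work lives inside Theorem~\ref{purely inf}. There the delicate point is the implication $(5)\Rightarrow(1)$, where almost unperforation of $S(G,G^a)$ is what converts the output of Tarski's theorem (Theorem~\ref{Thm:ExistenceOfStatesOnTypeSemigroup}) --- the mere existence of some $n$ with $(n+1)[A]\leq n[A]$ --- into the clean paradoxicality bound $2[A]\leq[A]$, by first passing to $2(n+1)[A]\leq n[A]$ and then applying almost unperforation, and finally reading this off as $(G^a,2,1)$-paradoxicality via Lemma~\ref{Lem:ParadoxicalityInTermsOfTypeSemigrp}. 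The complementary chain $(1)\Rightarrow(2)\Rightarrow(3)\Rightarrow(4)\Rightarrow(5)$ relies on Corollary~\ref{Cor: Every set is paradoxical implies purely infinite} for its first step and on the lifting of states on the type semigroup to faithful traces (Lemma~\ref{Lemma:faithful states on type sgrp lift to traces}) for its last; so the only real ``hardness'' one is importing here is the machinery developed in the preceding two sections.
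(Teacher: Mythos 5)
Your argument is correct and is exactly the paper's route: the paper derives this corollary directly from Corollary~\ref{simple} (simplicity, using that topological principality supplies a unit with trivial isotropy) together with the equivalence of assertions $(2)$ and $(4)$ in Theorem~\ref{purely inf} under almost unperforation. Nothing further is needed.
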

We may compare this corollary with a similar dichotomy result for separable $C^*$-algebras with finite nuclear dimension in \cite[Theorem~5.4]{WZ10}. However, we do not require the nuclearity of the groupoid $C^*$-algebra in the corollary. It is natural to ask for examples which satisfy all assumptions of the corollary with \emph{infinite} nuclear dimension.

Finally we can apply our findings to characterize stable finiteness of $C_r^*(G)$:
Recall that a unital $C^*$-algebra $A$ is called \textit{finite} if its unit $1\in A$ is a finite projection. It is called \textit{stably finite} if $M_n(A)$ is finite for all $n\in\N$.

\begin{thm}\label{stably finite} Let $G$ be an ample groupoid. Suppose that $G^{(0)}$ is compact and $G$ is minimal. Then the following are equivalent:
	\begin{enumerate}
		\item $C_r^*(G)$ admits a faithful tracial state;
		\item $C_r^*(G)$ is stably finite;
		\item Every clopen subset of $G^{(0)}$ is completely $G^a$-non-paradoxical;
		\item There exists a faithful state $\varphi:S(G,G^a)\rightarrow [0,\infty)$ with $\varphi([G^{(0)}])=1$.
	\end{enumerate}
\end{thm}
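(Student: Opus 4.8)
The plan is to run the cycle of implications $(1)\Rightarrow(2)\Rightarrow(3)\Rightarrow(4)\Rightarrow(1)$, each step being an application of the tools assembled above; note that since $G^{(0)}$ is compact, $C_r^*(G)$ is unital, so all the notions in the statement make sense. For $(1)\Rightarrow(2)$ I would invoke the standard fact that a unital $C^*$-algebra carrying a faithful tracial state $\tau$ is stably finite: the functional $\tau\otimes\Tr_n$ is a faithful trace on $M_n(C_r^*(G))$, and a faithful trace forces the unit to be a finite projection, so every $M_n(C_r^*(G))$ is finite. For $(2)\Rightarrow(3)$ I argue contrapositively: if some clopen $A\subseteq G^{(0)}$ is $(G^a,k,l)$-paradoxical for some $k>l>0$, then Proposition~\ref{Prop:Stably finite implies non-paradoxical} exhibits $1_k\otimes 1_A$ as an infinite projection in $M_k(C_r^*(G))$; since an infinite projection obstructs finiteness of $M_k(C_r^*(G))$ (pass to the corner cut down by that projection, which is then a non-finite unital corner of a finite algebra), $C_r^*(G)$ is not stably finite.

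The heart of the argument is $(3)\Rightarrow(4)$, and this is where minimality of $G$ enters. Applying the hypothesis to the compact open bisection $A=G^{(0)}$ and translating through Lemma~\ref{Lem:ParadoxicalityInTermsOfTypeSemigrp}, we get that $(n+1)[G^{(0)}]\not\leq n[G^{(0)}]$ in $S(G,G^a)$ for every $n\in\N$. Tarski's theorem (Theorem~\ref{Thm:ExistenceOfStatesOnTypeSemigroup}) then yields an additive map $f\colon S(G,G^a)\to[0,\infty]$ with $f([G^{(0)}])=1$. Such an $f$ is automatically a state for the algebraic preorder: it is order preserving, since $x\leq y$ means $y=x+z$ so $f(y)=f(x)+f(z)\geq f(x)$, and $f(0)=0$, since otherwise $f\equiv\infty$ would contradict $f([G^{(0)}])=1$; moreover it is non-trivial, as $f([G^{(0)}])=1\notin\{0,\infty\}$. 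Now, since $G$ is minimal, Lemma~\ref{Lemma:Simple Type Semigroup} says $S(G,G^a)$ is simple, and the discussion preceding that lemma shows that any non-trivial state on a simple monoid is both faithful and finite-valued. Hence $f$ is a faithful state $S(G,G^a)\to[0,\infty)$ with $f([G^{(0)}])=1$, which is precisely $(4)$.

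Finally, $(4)\Rightarrow(1)$ is nothing but Lemma~\ref{Lemma:faithful states on type sgrp lift to traces}, which takes a faithful state $\varphi$ on $S(G,G^a)$ with $\varphi([G^{(0)}])=1$ and lifts it --- through the monoid homomorphism $\rho$ of Proposition~\ref{Prop:K-theoryTypeSemigroup}, a $K_0$-lifting argument, and composition with the canonical conditional expectation $E$ --- to a faithful tracial state on $C_r^*(G)$. The main obstacle, such as it is, lies in the step $(3)\Rightarrow(4)$: Tarski's theorem only produces an additive map with possibly infinite values, and the \emph{faithfulness} together with finiteness demanded in $(4)$ fail for general ample groupoids (the remark following Theorem~\ref{purely inf} recalls an example); it is exactly minimality --- via simplicity of the type semigroup --- that repairs this, and the same hypothesis is what makes the lifting in $(4)\Rightarrow(1)$ available. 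Everything else is a matter of chaining the earlier results.
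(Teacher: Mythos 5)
Your proposal is correct and follows essentially the same route as the paper: the same cycle $(1)\Rightarrow(2)\Rightarrow(3)\Rightarrow(4)\Rightarrow(1)$, with $(2)\Rightarrow(3)$ via Proposition~\ref{Prop:Stably finite implies non-paradoxical}, $(3)\Rightarrow(4)$ via Lemma~\ref{Lem:ParadoxicalityInTermsOfTypeSemigrp}, Tarski's theorem, and simplicity of the type semigroup from minimality, and $(4)\Rightarrow(1)$ via Lemma~\ref{Lemma:faithful states on type sgrp lift to traces}. The extra details you supply (e.g.\ that the additive map from Tarski's theorem is automatically an order-preserving state with $f(0)=0$) are accurate elaborations of steps the paper leaves implicit.
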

\begin{proof}
	$(1)\Rightarrow (2)$: It is well-known.\\
	$(2)\Rightarrow (3)$: This follows from Proposition \ref{Prop:Stably finite implies non-paradoxical}.\\
	$(3)\Rightarrow (4)$: It follows from Lemma \ref{Lem:ParadoxicalityInTermsOfTypeSemigrp} in combination with Theorem \ref{Thm:ExistenceOfStatesOnTypeSemigroup}, that $S(G,G^a)$ admits a non-trivial state. By Lemma \ref{Lemma:Simple Type Semigroup} and the discussion above that Lemma every such state is faithful and will only take finite values. Upon normalizing $\varphi$ we can assume that $\varphi([G^{(0)}])=1$.\\
	$(4)\Rightarrow (1)$: This is the content of Lemma \ref{Lemma:faithful states on type sgrp lift to traces}.
\end{proof}

This paper follows the general philosophy that under suitable freeness conditions on $G$ much of the structure of $C_r^*(G)$ can be seen at the level of the canonical abelian subalgebra $C_0(G^{(0)})$. And indeed our Theorems \ref{ThmA} and \ref{ThmB} seem to validate this point of view. We believe that many of our results presented in this work generalize to the case of twisted groupoid $C^*$-algebras and hence - in view of Renault's reconstruction theorem (see \cite{MR2460017}) - might be used to give abstract conditions on Cartan pairs ensuring a complete understanding of structural properties of the ambient $C^*$-algebra in terms of its Cartan subalgebra.

\ \newline
{\bf Acknowledgments}. A few days after we posted our paper on the arXiv, Rainone and Sims informed us, that they independently obtained results very similar to those in sections 4 and 5 of the present paper. We would like to thank them for citing our paper. 

In fact, their definition of paradoxical decomposition is more flexible than our definition. However, these two notions of paradoxical decomposition are equivalent by a Bernstein-Schr\"{o}der-type argument (see e.g. the proof of \cite[Proposition~3.2]{RS17}). This can also be deduced from the fact that their definition of the type semigroup agrees with our definition (see \cite[Remark~5.5]{RS17}).

\bibliographystyle{plain}

\end{document}